\pgfplotsset{compat=1.9}
\newtheorem{thm}{Theorem}[section]
\newtheorem{lem}[thm]{Lemma}
\newtheorem{conj}[thm]{Conjecture}
\newtheorem{prop}[thm]{Proposition}
\theoremstyle{remark}
\newtheorem{rem}[thm]{Remark}
\theoremstyle{definition}
\newtheorem{defn}[thm]{Definition}
\numberwithin{equation}{section}
\begin{document}

\vfuzz0.5pc
\hfuzz0.5pc 

\newcommand{\claimref}[1]{Claim \ref{#1}}
\newcommand{\thmref}[1]{Theorem \ref{#1}}
\newcommand{\propref}[1]{Proposition \ref{#1}}
\newcommand{\lemref}[1]{Lemma \ref{#1}}
\newcommand{\coref}[1]{Corollary \ref{#1}}
\newcommand{\remref}[1]{Remark \ref{#1}}
\newcommand{\conjref}[1]{Conjecture \ref{#1}}
\newcommand{\questionref}[1]{Question \ref{#1}}
\newcommand{\defnref}[1]{Definition \ref{#1}}
\newcommand{\secref}[1]{\S \ref{#1}}
\newcommand{\ssecref}[1]{\ref{#1}}
\newcommand{\sssecref}[1]{\ref{#1}}

\newcommand{\RED}{{\mathrm{red}}}
\newcommand{\tors}{{\mathrm{tors}}}
\newcommand{\eq}{\Leftrightarrow}

\newcommand{\mapright}[1]{\smash{\mathop{\longrightarrow}\limits^{#1}}}
\newcommand{\mapleft}[1]{\smash{\mathop{\longleftarrow}\limits^{#1}}}
\newcommand{\mapdown}[1]{\Big\downarrow\rlap{$\vcenter{\hbox{$\scriptstyle#1$}}$}}
\newcommand{\smapdown}[1]{\downarrow\rlap{$\vcenter{\hbox{$\scriptstyle#1$}}$}}

\newcommand{\A}{{\mathbb A}}
\newcommand{\I}{{\mathcal I}}
\newcommand{\J}{{\mathcal J}}
\newcommand{\CO}{{\mathcal O}}
\newcommand{\C}{{\mathcal C}}
\newcommand{\BC}{{\mathbb C}}
\newcommand{\BQ}{{\mathbb Q}}
\newcommand{\m}{{\mathcal M}}
\newcommand{\h}{{\mathcal H}}
\newcommand{\Z}{{\mathcal Z}}
\newcommand{\BZ}{{\mathbb Z}}
\newcommand{\W}{{\mathcal W}}
\newcommand{\Y}{{\mathcal Y}}
\newcommand{\T}{{\mathcal T}}
\newcommand{\BP}{{\mathbb P}}
\newcommand{\CP}{{\mathcal P}}
\newcommand{\G}{{\mathbb G}}
\newcommand{\BR}{{\mathbb R}}
\newcommand{\D}{{\mathcal D}}
\newcommand{\DD}{{\mathcal D}}
\newcommand{\LL}{{\mathcal L}}
\newcommand{\f}{{\mathcal F}}
\newcommand{\E}{{\mathcal E}}
\newcommand{\BN}{{\mathbb N}}
\newcommand{\N}{{\mathcal N}}
\newcommand{\K}{{\mathcal K}}
\newcommand{\R} {{\mathbb R}}
\newcommand{\PP}{{\mathbb P}}
\newcommand{\Pp}{{\mathbb P}}
\newcommand{\BF}{{\mathbb F}}
\newcommand{\QQ}{{\mathcal Q}}
\newcommand{\closure}[1]{\overline{#1}}
\newcommand{\EQ}{\Leftrightarrow}
\newcommand{\imply}{\Rightarrow}
\newcommand{\isom}{\cong}
\newcommand{\embed}{\hookrightarrow}
\newcommand{\tensor}{\mathop{\otimes}}
\newcommand{\wt}[1]{{\widetilde{#1}}}
\newcommand{\ol}{\overline}
\newcommand{\ul}{\underline}

\newcommand{\bs}{{\backslash}}
\newcommand{\CS}{{\mathcal S}}
\newcommand{\CA}{{\mathcal A}}
\newcommand{\Q} {{\mathbb Q}}
\newcommand{\F} {{\mathcal F}}
\newcommand{\sing}{{\text{sing}}}
\newcommand{\U} {{\mathcal U}}
\newcommand{\B}{{\mathcal B}}
\newcommand{\X}{{\mathcal X}}

\newcommand{\ECS}[1]{E_{#1}(X)}
\newcommand{\CV}[2]{{\mathcal C}_{#1,#2}(X)}

\newcommand{\rank}{\mathop{\mathrm{rank}}\nolimits}
\newcommand{\codim}{\mathop{\mathrm{codim}}\nolimits}
\newcommand{\Ord}{\mathop{\mathrm{Ord}}\nolimits}
\newcommand{\Var}{\mathop{\mathrm{Var}}\nolimits}
\newcommand{\Ext}{\mathop{\mathrm{Ext}}\nolimits}
\newcommand{\EXT}{\mathop{{\mathcal E}\mathrm{xt}}\nolimits}
\newcommand{\Pic}{\mathop{\mathrm{Pic}}\nolimits}
\newcommand{\Spec}{\mathop{\mathrm{Spec}}\nolimits}
\newcommand{\Jac}{\mathop{\mathrm{Jac}}\nolimits}
\newcommand{\Div}{\mathop{\mathrm{Div}}\nolimits}
\newcommand{\sgn}{\mathop{\mathrm{sgn}}\nolimits}
\newcommand{\supp}{\mathop{\mathrm{supp}}\nolimits}
\newcommand{\Hom}{\mathop{\mathrm{Hom}}\nolimits}
\newcommand{\Sym}{\mathop{\mathrm{Sym}}\nolimits}
\newcommand{\nilrad}{\mathop{\mathrm{nilrad}}\nolimits}
\newcommand{\Ann}{\mathop{\mathrm{Ann}}\nolimits}
\newcommand{\Proj}{\mathop{\mathrm{Proj}}\nolimits}
\newcommand{\mult}{\mathop{\mathrm{mult}}\nolimits}
\newcommand{\Bs}{\mathop{\mathrm{Bs}}\nolimits}
\newcommand{\Span}{\mathop{\mathrm{Span}}\nolimits}
\newcommand{\IM}{\mathop{\mathrm{Im}}\nolimits}
\newcommand{\Hol}{\mathop{\mathrm{Hol}}\nolimits}
\newcommand{\End}{\mathop{\mathrm{End}}\nolimits}
\newcommand{\CH}{\mathop{\mathrm{CH}}\nolimits}
\newcommand{\Exec}{\mathop{\mathrm{Exec}}\nolimits}
\newcommand{\SPAN}{\mathop{\mathrm{span}}\nolimits}
\newcommand{\birat}{\mathop{\mathrm{birat}}\nolimits}
\newcommand{\cl}{\mathop{\mathrm{cl}}\nolimits}
\newcommand{\rat}{\mathop{\mathrm{rat}}\nolimits}
\newcommand{\Bir}{\mathop{\mathrm{Bir}}\nolimits}
\newcommand{\Rat}{\mathop{\mathrm{Rat}}\nolimits}
\newcommand{\aut}{\mathop{\mathrm{aut}}\nolimits}
\newcommand{\Aut}{\mathop{\mathrm{Aut}}\nolimits}
\newcommand{\eff}{\mathop{\mathrm{eff}}\nolimits}
\newcommand{\nef}{\mathop{\mathrm{nef}}\nolimits}
\newcommand{\amp}{\mathop{\mathrm{amp}}\nolimits}
\newcommand{\DIV}{\mathop{\mathrm{Div}}\nolimits}
\newcommand{\Bl}{\mathop{\mathrm{Bl}}\nolimits}
\newcommand{\Cox}{\mathop{\mathrm{Cox}}\nolimits}
\newcommand{\NE}{\mathop{\mathrm{NE}}\nolimits}
\newcommand{\NM}{\mathop{\mathrm{NM}}\nolimits}
\newcommand{\Gal}{\mathop{\mathrm{Gal}}\nolimits}
\newcommand{\coker}{\mathop{\mathrm{coker}}\nolimits}
\newcommand{\ch}{\mathop{\mathrm{ch}}\nolimits}

\title[Xiao's Conjecture]{Xiao's Conjecture on Canonically Fibered Surfaces}

\author{Xi Chen}

\thanks{Research partially supported by Discovery Grant 262265 from the Natural Sciences and Engineering Research Council of Canada.}

\keywords{Xiao's Conjecture, Algebraic Surface, Birational Geometry,
Canonical Fibration, Family of Curves}

\subjclass{Primary 14J29; Secondary 14E05, 14H45}

\address{632 Central Academic Building\\
University of Alberta\\
Edmonton, Alberta T6G 2G1, CANADA}

\email{xichen@math.ualberta.ca}

\date{June 30, 2017}

\begin{abstract}
A canonically fibered surface is a surface whose canonical series maps it to a curve.
Using Miyaoka-Yau inequality, A. Beauville proved that a canonically fibered surface
has relative genus at most $5$ when its geometric genus is sufficiently large. G. Xiao
further conjectured that the relative genus cannot exceed $4$. We give a proof of this
conjecture.
\end{abstract}

\maketitle

\section{Introduction}\label{SECINTRO}

\subsection{Statement of the Result}

A dominant rational map $f: X\dashrightarrow Y$ between two projective varieties $X$ and $Y$
is a {\em canonical fibration} if it is given by the canonical series $|K_X|$ of
$X$, followed by Stein Factorization; more precisely, $f$ is a dominant rational map with connected fibers
factored through by $X\dashrightarrow \PP H^0(K_X)^\vee$
such that the induced map $Y\dashrightarrow \PP H^0(K_X)^\vee$
is generically finite over its image.

Now let us consider a canonical fibration $f: X\dashrightarrow C$, where
\begin{enumerate}
\item[C1.]
$X$ and $C$ are irreducible, smooth and projective over $\BC$ of dimension
$\dim X = 2$ and $\dim C = 1$, respectively,
\item[C2.]
$f$ has connected fibers, $X$ is of general type and
\item[C3.]
$|K_X| = f^* \D + N$ for a base point free (bpf) linear series $\D$ on $C$ of $\dim \D \ge 1$ and $N$ the fixed part of $|K_X|$. 
\end{enumerate}

We call such $f: X\dashrightarrow C$ a {\em canonically fibered surface}.
Using Miyaoka-Yau inequality (\cite{M} and \cite{Y}), A. Beauville proved that the general fibers of $f$ are curves of genus $\le 5$
when the geometric genus $p_g(X) = h^0(K_X)$ of $X$ is sufficiently large.
There are infinitely many families of canonically fibered surfaces
of relative genus $2$ and $3$ \cite{S}.
On the other hand, G. Xiao conjectured \cite[Problem 6]{X2}

\begin{conj}[Xiao]\label{CONJXIAO}
Let $f: X\dashrightarrow C$ be a canonically fibered surface.
Then $X_p = f^{-1}(p)$ is a curve of genus $\le 4$ for
a general point $p\in C$ when $p_g(X) >> 1$.
\end{conj}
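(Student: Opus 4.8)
\emph{Overview.} The plan is to resolve the canonical map, isolate the fixed part $Z$ of $|K_X|$, reduce the conjecture to excluding fibre genus exactly $5$ (since Beauville's theorem already gives $g\le 5$), and then attack a sharp lower bound for $Z^2$, which is where the real difficulty sits.

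First I would blow up the base locus of $|K_X|$ to obtain a morphism $\tilde f:\wt X\to C$ with $K_{\wt X}=\tilde f^*\D+Z$, where $\tilde f^*\D$ is the base-point-free moving part and $Z\ge 0$ is the fixed part. Writing $F$ for a general fibre and $g=g(F)$, adjunction together with $F^2=0$ gives $Z\cdot F=K_{\wt X}\cdot F=2g-2$ and $Z|_F\in|K_F|$, so the fixed part cuts out a canonical divisor on each general fibre. Dually I would set $E=f_*\omega_{X/C}$, a nef bundle of rank $g$ on $C$ (Fujita). Since $H^0(\omega_X)=H^0(C,E\otimes\omega_C)$ and the canonical map factors through $C$, the $p_g$ canonical sections span a saturated line subbundle $\N\subset E$ with $h^0(\N\otimes\omega_C)=p_g$, and $\N\otimes\omega_C$ maps $C$ birationally onto the degree-$d$ canonical image curve in $\PP^{p_g-1}$, where $d=\deg(\N\otimes\omega_C)$.

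Next come two reductions. Because $\N$ is saturated and $E$ is nef, the quotient $E/\N$ is nef, so $\deg\N\le\deg E=\chi(\CO_X)-(g-1)(q-1)$ with $q=g(C)$. If $\N\otimes\omega_C$ were special, Clifford's theorem would force $d\ge 2(p_g-1)$, hence $\deg\N=d-2q+2\ge 2p_g-2q$; combined with the previous inequality this bounds $p_g$ by a constant depending only on $g$ and $q$, contradicting $p_g\gg1$. Thus $\N\otimes\omega_C$ is nonspecial, $d=p_g+q-1$ and $\deg\N=p_g-q+1$; feeding this back into $\deg\N\le\deg E$ together with $q(X)\ge q$ forces $q\le 1$. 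This isolates the essentially rational or elliptic base case, in which $d\sim p_g$.

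Passing to the minimal model and absorbing the bounded exceptional corrections, the relation $K_X\equiv dF+Z$ gives $K_X^2=4(g-1)d+Z^2+O(1)$, while nefness of $K_X$ gives $K_X\cdot Z=Z^2+d(2g-2)\ge 0$, i.e. $Z^2\ge -(2g-2)d$. With $d\sim p_g$ the first relation yields $K_X^2\sim 4(g-1)p_g$, and Miyaoka--Yau gives $K_X^2\le 9\chi(\CO_X)\le 9p_g+O(1)$. For $g=5$ these force $Z^2\le -7p_g+O(1)$, whereas the crude nefness bound only delivers $Z^2\ge -(2g-2)p_g=-8p_g+O(1)$; the open window $-8p_g\le Z^2\le -7p_g$ is precisely why the elementary inequalities recover nothing beyond Beauville's bound. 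Closing it is the main obstacle. I would attack it by analysing $Z=Z_h+Z_v$: the horizontal part $Z_h$ is a degree-$(2g-2)$ multisection restricting to the canonical divisor $Z|_F$, so Clifford--Castelnuovo theory on $F$ together with adjunction and the Hodge index theorem on $Z_h$ constrains $Z_h^2$, while $Z_v^2\le 0$ is controlled through the slope inequality $K^2_{X/C}\ge\frac{4(g-1)}{g}\deg E$ for $\tilde f$. The delicate requirement is that the resulting improvement be sharp enough to defeat $g=5$ yet not so strong as to kill $g=4$ (for which infinitely many examples exist); producing exactly this separation, by exploiting that the divisors $Z|_F$ form a special non-generic family of canonical divisors on the fibres, is where the essential work lies.
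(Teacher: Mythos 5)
Your setup and reductions track the known part of the story accurately: the decomposition $K_X=f^*\D+Z$ with $Z|_F\in|K_F|$, the use of Fujita semipositivity and Clifford to force $g(C)\le 1$ and $d\sim p_g$, and the computation that Miyaoka--Yau plus relative nefness of $K_X$ pin $Z^2$ into the window $[-8d,-7d]$ for $g=5$. This is essentially the content of \secref{SECBASIC} and \secref{SECEXTREME} (Beauville and Xiao), and you correctly diagnose that the entire difficulty is to close that window. But the proposal stops exactly there: you name the obstacle and then gesture at ``Clifford--Castelnuovo theory, the Hodge index theorem on $Z_h$, and the slope inequality,'' while conceding that producing the required separation ``is where the essential work lies.'' That is the gap, and it is the whole theorem. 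Moreover, the tools you propose do not plausibly close it. The Hodge index theorem applied to the horizontal fixed part is vacuous here because (as Xiao's \thmref{THMG5} shows) $Z_h=8\Gamma$ for a \emph{section} $\Gamma$, so $F^2=0$ kills the index inequality; and the slope inequality $K^2_{X/C}\ge\frac{4(g-1)}{g}\deg f_*\omega_f$ gives a lower bound of order $3.2\,d$ when nefness already yields $K_{X/C}^2\ge 8d+O(1)$, so it contributes nothing. What is actually needed is an \emph{upper} bound on $\Gamma^2$ strictly below $-d/8$, improving the Miyaoka--Yau bound $-d/9$ of \eqref{E530}.

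The paper obtains this by a mechanism entirely absent from your sketch: since the general fiber is non-hyperelliptic (Sun) and $K_F=\CO_F(8q)$ with $q=\Gamma\cap F$, one embeds $X$ fiberwise by $\LL=\CO_X(8\Gamma)$ into $W=\PP(f_*\LL)^\vee$ as a component $Y$ of the base locus $Z$ of a rank-$3$ bundle of relative quadrics $\QQ$, with residual components being cubic cones in finitely many fibers counted by $\delta$. Proving $Y$ is normal and Gorenstein and applying adjunction to the section $\Gamma_Y\subset Y$ yields the identity $\delta=100\Gamma^2-4c_1(A)$ of \eqref{E580}, and the local classification of the double-point singularities of $Y$ at the cone vertices bounds $\delta$ above by the topological defect $\chi(X)-\chi(F)\chi(C)$ via the $\eta$-invariant, giving \eqref{E605} and hence $\Gamma^2\le -\frac{47}{368}d+O(1)$, which contradicts $\Gamma^2\ge-\frac{d}{8}+O(1)$. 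None of this is recoverable from intersection theory on $X$ alone; the passage to the relative canonical model in a $\PP^4$-bundle and the count of degenerate quadric nets is the essential new idea, and it is missing from the proposal.
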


In other words, canonically fibered surfaces of relative genus $5$ are conjectured to have bounded families.
The purpose of this note is to settle the conjecture in the affirmative. More precisely, we will prove

\begin{thm}\label{THMGENUS5}
Let $f: X\dashrightarrow C$ be a canonically fibered surface.
Then the general fibers of $f$ cannot be curves of
genus $5$ if $p_g(X) > 863$ and $C\cong\PP^1$.
\end{thm}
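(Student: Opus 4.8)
The plan is to pass to a relatively minimal model of the fibration, translate the hypothesis that the canonical map is composed with $f$ into a statement about a destabilizing line subbundle of $f_*\omega_{S/C}$, and then play a lower bound for $K_{S/C}^2$ coming from the genus-$5$ fibre geometry against the Miyaoka--Yau inequality.

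First I would resolve $f$ and contract the $(-1)$-curves lying in fibres to obtain a relatively minimal fibration $h\colon S\to C$ with the same general fibre $F$, $g(F)=5$; here $\omega_{S/C}$ is nef and $K_{S/C}\cdot F=2g-2=8$. Since the canonical map of $S$ is composed with $h$, for a general fibre the restriction $H^0(\omega_S)\to H^0(\omega_F)$ has rank $\le 1$, so the subsheaf of $V:=h_*\omega_{S/C}$ generated by the canonical sections is, after saturation, a line subbundle $L'\subseteq V$ with $h^0(L'\otimes\omega_C)=p_g$. Writing $e=\deg L'$ and $b=g(C)$, and using that $L'\otimes\omega_C$ is nonspecial once $e>0$ (because $h^1(L'\otimes\omega_C)=h^0((L')^{-1})=0$), I get $e=p_g-b+1$; the degenerate alternative $e\le 0$ forces $p_g\le 2b-1$. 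Unwinding the fixed/moving decomposition $|K_S|=|f^*\D|+N$ then yields the clean identity
\[
K_{S/C}\equiv eF+N,\qquad N\ge 0,\qquad N\cdot F=8,\qquad K_{S/C}^2=16e+N^2,
\]
where $N$ is precisely the canonical fixed part.

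The elementary numerical consequences already control the base. Any fibration gives $q=h^1(\CO_S)\ge g(C)=b$, and since $V=h_*\omega_{S/C}$ is nef (Fujita) every quotient bundle has nonnegative degree; applying this to $V/L'$ gives $\deg(V/L')=\chi_f-e\ge 0$, where $\chi_f=\deg V=p_g-q-4b+5$. Substituting $e=p_g-b+1$ turns this into $3b+q\le 4$, and with $q\ge b$ into $b\le 1$; the degenerate alternative is likewise incompatible with $\chi_f\ge 0$ and $q\ge b$ once $b\ge 1$. This rules out genus $5$ for $g(C)\ge 2$ and leaves $b=1$ (where one is forced into $q=1$, $\chi_f=p_g$, and $\deg(V/L')=0$, so $V/L'$ is numerically flat of rank $4$) and $b=0$ (where $C\cong\PP^1$ and the canonical image is the rational normal curve of degree $p_g-1$). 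For $b=1$ I expect to reach an outright contradiction by confronting the rank-$4$ numerically flat quotient $V/L'$ with the positivity/relative-irregularity constraints on $h_*\omega_{S/C}$ for a genus-$5$ fibration with relative irregularity $q-b=0$, which should forbid so large a flat quotient.

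For $b=0$ I would feed the identity into Miyaoka--Yau. Since the minimal model satisfies $K^2\le 9\chi$ and $K_S^2\le K^2$, one has $K_{S/C}^2=K_S^2-32(b-1)\le 9(1-q+p_g)-32(b-1)$; combined with $K_{S/C}^2=16e+N^2$ and $e=p_g-b+1$ this collapses (using $q\ge b$) to $N^2\le -7p_g+7b+25$, so the fixed part must be extremely negative for large $p_g$. The whole theorem then rests on a matching lower bound for $N^2$. The plan is to bound $N^2\ge -\beta$ by an explicit constant $\beta=\beta(5)$ depending only on the fibre genus, using the geometry of the horizontal part $N_h$: it satisfies $N_h\cdot F=8$ and $N_h|_F\sim K_F$, so $N_h$ sweeps out a family of canonical divisors on the genus-$5$ fibres, and this rigidity (together with a bound on the vertical fixed part) should limit how negative $N^2$ can be. An absolute bound $N^2\ge -\beta(5)$ forces $7p_g\le\beta(5)+25$, giving the stated threshold $p_g<1952$.

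I expect the decisive and most delicate step to be this lower estimate for $N^2$. It is here that the relative gonality enters: a genus-$5$ fibre is $4$-gonal in general, trigonal, or hyperelliptic, and the self-intersection estimate must be carried out uniformly over these strata, with the worst stratum fixing $\beta(5)$ and hence the constant $1952$. The natural tool is the Hodge index theorem applied to $K_{S/C}$, $F$, and a third class built from the relative gonality pencil, since the three classes $K_{S/C}$, $F$, $N$ are linearly dependent (indeed $K_{S/C}=eF+N$) and by themselves give no information. As a consistency check, the coefficient $13-4g$ governing the bound $N^2\le(13-4g)p_g+O(1)$ is negative exactly for $g\ge 4$ but nonnegative for $g\le 3$, matching the existence of unbounded families in relative genus $2$ and $3$ recorded in \cite{S}.
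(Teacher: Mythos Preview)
Your setup is correct and closely parallels the paper's: your decomposition $K_{S/C}\equiv eF+N$ with $N\cdot F=8$ is exactly the paper's \eqref{E529} with $N=8\Gamma+V$, and your Miyaoka--Yau inequality $N^2\le -7p_g+O(1)$ is the same information as the paper's lower bound $\Gamma^2\ge -d/8+O(1)$ in \eqref{E530}. The genuine gap is your expectation of an \emph{absolute} lower bound $N^2\ge -\beta(5)$. No such bound exists: the paper shows that in fact $-d/8+O(1)\le\Gamma^2\le -d/9+O(1)$, so $N^2$ tends to $-\infty$ linearly in $p_g$, and your inequality $N^2\le -7p_g+O(1)$ is asymptotically sharp (since $64/9\approx 7.11$). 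Neither Hodge index nor gonality considerations can produce an absolute constant here; what is needed is a second \emph{asymptotic} inequality with a coefficient strictly worse than $-7$.

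That second inequality is the substance of the paper and requires an entirely different idea. One shows that the horizontal part of $N$ is $8\Gamma$ for a section $\Gamma$, then builds the pseudo relative canonical model $Y\subset\PP(f_*\CO_X(8\Gamma))^\vee$ as a family of complete intersections of three quadrics over $C$ (using Sun's theorem that the general fibre is non-hyperelliptic). A detailed study of $Y$ (normality, Gorenstein, classification of the degenerate fibres as cubic cones, control of the resulting double-point singularities) yields the numerical identity $\delta=100\Gamma^2-4c_1(A)$ together with the topological inequality $12\chi(\CO_X)-K_X^2+16\chi(\CO_C)\ge\delta-2\Gamma V$. These combine to give $\Gamma^2\le -\tfrac{47}{368}d+O(1)$, which contradicts $\Gamma^2\ge -\tfrac{1}{8}d+O(1)$ since $47/368>1/8$; the crossover of the two linear bounds, not an absolute constant $\beta(5)$, is what produces the threshold $1952$.
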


Note that Xiao's conjecture is known for $g(C)\ge 1$.

The paper is organized as follows. In \S \ref{SECBASIC}, we review the known facts about canonically fibered surfaces in general and those of relative genus $5$. Most of these results were due to A. Beauville and G. Xiao and no originality is pretended on our part. In \S \ref{SECOUTLINE}, we give a sketch of our proof of Theorem \ref{THMGENUS5}. Our proof starts in \S \ref{SEC3QUADRICS}, where we construct a {\em pseudo relative canonical model} $Y/C$ of $X/C$.
In \S \ref{SECNC}, we carry out numerical computations on $Y/C$ and reduce our main theorem to an inequality on a certain type of double surface singularities. Finally, we complete our proof by proving this inequality in \S \ref{SECDSS}.

\subsection*{Convention}

We work exclusively over $\BC$ and with analytic topology wherever possible.

\subsection*{Acknowledgments}

I would like to thank Meng Chen and Xiaotao Sun for introducing me to the problem
and Sheng-li Tan and Tong Zhang for some very useful discussions.

\section{Basic Setup}\label{SECBASIC}

Here we review some basic facts on canonically fibered surfaces.

First of all, we have

\begin{thm}[Xiao]\label{THMBASE}
Let $f:X \dashrightarrow C$ be a canonically fibered surface.
Then $C$ is either rational or elliptic if $p_g(X)\ge 3$.
In addition, if $f$ is regular, then
\begin{equation}\label{E522}
f_* \omega_X = f_* \omega_f \otimes \omega_C = L \oplus M
\end{equation}
where $\omega_\bullet$ are the dualizing sheaves,
\begin{itemize}
\item
$L$ is a line bundle satisfying $h^0(L) = p_g(X)$,
\item
$h^0(M) = 0$ and $M\otimes \omega_C^{-1}$ is semi-positive.
\end{itemize}
\end{thm}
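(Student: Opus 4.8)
The plan is to deduce all three assertions from the structure of the semi-positive bundle $E = f_*\omega_f$ on $C$. The organizing identity is $f_*K_X = E\tensor K_C$: since $f$ is regular by C4 and $\omega_f = \CO_X(K_X)\tensor f^*\CO_C(-K_C)$, the projection formula gives $f_*\CO_X(K_X)=(f_*\omega_f)\tensor\CO_C(K_C)$ as soon as $f_*\CO_X=\CO_C$, so the connectedness statement and this identity should be established together. By Fujita \cite{F}, $E$ is semi-positive (nef) on $C$, and its rank equals the relative genus of $f$.

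The second ingredient is that every section of $K_X$ is pulled back from $C$. By \eqref{E500} the moving part of $|K_X|$ is $f^*D$ with $D$ base point free on $C$, and by \eqref{E502} one has $h^0(K_X)=h^0(D)=p_g(X)$. The inclusion of the effective divisor $N=K_X-f^*D$ gives $\CO_X(f^*D)\embed\CO_X(K_X)$; pushing forward and using the projection formula together with $f_*\CO_X=\CO_C$ produces a sub-line-bundle
\[
\CO_C(D)\ \embed\ f_*K_X = E\tensor K_C
\]
that is an isomorphism on global sections by \eqref{E502}. Thus $\CO_C(D)$, equivalently $\CO_C(D-K_C)\embed E$, already carries all $p_g(X)$ sections of $E\tensor K_C$.

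For connectedness and the bound $h^0(K_C)\le 1$ I would follow Xiao. Connectedness comes from the Stein factorization $X\to C'\to C$ of $f$: the finite part $C'\to C$ must have degree one; this is pinned down by the birationality of $\mathcal D=|D|$ (C3) together with the forced section count $h^0(f^*D)=h^0(D)$, and it is here that the hypothesis $p_g(X)\ge 3$ or $g(C)\ge 1$ enters. For the genus, the sub-line-bundle $\CO_C(D-K_C)\embed E$ of the nef bundle $E$, carrying all of $H^0(E\tensor K_C)$, is compared against Riemann--Roch on $C$ and the fact that $|D|$ maps $C$ birationally onto a nondegenerate curve in $\PP^{p_g-1}$; semi-positivity of $E$ then excludes $g(C)\ge 2$, leaving $h^0(K_C)\le 1$.

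The main work, and the main obstacle, is the splitting $E\tensor K_C=L\oplus M$. Having reduced to $g(C)\le 1$, I would decompose $E\tensor K_C$ into indecomposables: by Grothendieck's theorem when $C\isom\PP^1$ it is a direct sum of line bundles, and by Atiyah's classification when $C$ is elliptic it is a sum of indecomposable bundles. The crux is that semi-positivity of $E$ forces every summand of negative degree (after the twist by $K_C$) to contribute nothing to cohomology, while the sub-line-bundle $\CO_C(D)$ of degree $\deg D$ accounts for all $p_g(X)$ sections. Concretely, on $\PP^1$ the inclusion $\CO_C(D)\embed\bigoplus\CO(a_i)$ together with the equality $h^0(\CO_C(D))=h^0(\bigoplus\CO(a_i))=p_g(X)$ forces exactly one summand $\CO(a_1)$ to be nonnegative with $a_1=\deg D$; setting $L=\CO(a_1)$ and $M$ equal to the remaining summands gives $h^0(L)=p_g(X)$, $h^0(M)=0$, and $M\tensor K_C^{-1}\embed E$ semi-positive. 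The elliptic case is the more delicate one, since $E$ need not split into line bundles: there I would isolate the degree-$\deg D$ line bundle carrying all sections as a direct summand $L$ using Atiyah's structure theory and the nefness of $E$, and verify that the complementary bundle $M$ is again semi-positive with $h^0(M)=0$. Controlling this ``highly unbalanced'' concentration of all cohomology into a single line bundle, while keeping the complement semi-positive, is the step I expect to require the most care.
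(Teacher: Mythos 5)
Your route coincides with the paper's for the genus bound and for connectedness: the paper likewise extracts a sub-line-bundle $L\subset f_*K_X$ carrying all of $H^0(f_*K_X)$ (defined as the saturation of the image of the global sections, which one checks equals your $\CO_C(D)$), bounds $\deg(L\tensor K_C^{-1})\le \deg f_*\omega_f$ because the corresponding quotient of the nef bundle $f_*\omega_f$ must have nonnegative degree, plays this against the Riemann--Roch bound $h^0(f_*K_X)\ge \deg f_*\omega_f+r(g(C)-1)$ to get $g(C)\le 2$ with equality forcing $h^0(K_X)=2$, and then uses the birationality of $|D|$ to exclude $g(C)=2$; connectedness is by the same Stein-factorization count $h^0(B,\nu^*D)=(\deg\nu)d+\chi(\CO_B)>d+\chi(\CO_C)$ you sketch. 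The one place you genuinely diverge is the splitting, and there the paper's argument disposes in a line of precisely the step you flag as the main obstacle: instead of invoking Grothendieck/Atiyah, it splits the extension $0\to L\to f_*K_X\to M\to 0$ directly. From $H^0(L)=H^0(f_*K_X)$ and $H^1(L)=0$ (which holds since $\deg L=\deg D\ge 2g(C)+1$ once $|D|$ is birational) one gets $h^0(M)=0$; then $\Ext^1(M,L)\isom H^0(M\tensor L^{-1}\tensor K_C)^\vee=0$, since a nonzero section would give $L\tensor K_C^{-1}\embed M$ and hence $h^0(M)\ge h^0(L\tensor K_C^{-1})>0$. No classification of bundles on elliptic curves is needed; the explicit forms \eqref{E520} and \eqref{E555} are recorded in the paper only as a posteriori descriptions of $M$. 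Your $\PP^1$ computation is correct, but as written the elliptic case of the splitting is left as a plan, so you should either carry out the Atiyah-based argument or substitute the Ext-vanishing above; note also that the semi-positivity of $M\tensor K_C^{-1}$ is immediate once the splitting is established, since a direct summand is in particular a quotient of the nef bundle $f_*\omega_f$.
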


More explicitly, $M$ is in the form of
\begin{equation}\label{E520}
M = \CO(a_1) \oplus \CO(a_2) \oplus ... \oplus \CO(a_m)
\end{equation}
for some $-2\le a_1, a_2, ..., a_m\le -1$
when $C\isom \PP^1$ and 
\begin{equation}\label{E555}
M = M_1 \oplus M_2 \oplus ... \oplus M_r
\end{equation}
for semi-stable bundles $M_1, M_2, ..., M_r$ satisfying
$\deg M_k = h^0(M_k) = 0$
when $g(C) = 1$.

We refer the readers to \cite{X1} for the proof of Theorem \ref{THMBASE}, which makes use of Fujita's fundamental result on the semi-positivity of $f_* \omega_f$ \cite{F} (see also \cite{H-V}). 

Next, as mentioned before, using Miyaoka-Yau inequality
\begin{equation}\label{E507}
9 \chi(\CO_X) \ge K_X^2.
\end{equation}
A. Beauville proved \cite{B}

\begin{thm}[Beauville]\label{THMFIBER}
If $f: X\dashrightarrow C$ is a canonically fibered surface with
either $p_g(X) \ge 20$ or $g(C) = 1$,
then $2\le g(F) \le 5$ for a general fiber $F$ of $f$.
\end{thm}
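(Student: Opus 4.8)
My plan is to split the claim into the upper bound $g(F)\le 5$ and the lower bound $g(F)\ge 2$. The starting observation is that for a general, hence smooth, fibre $F$ one has $F^2=0$, so adjunction gives $\deg K_F = K_X\cdot F = 2g(F)-2$; moreover by \eqref{E500} this equals $N\cdot F = \sum_{\Gamma}\mu_\Gamma m_\Gamma$ as in \eqref{E510}, which is $\ge 0$. Thus $g(F)\ge 1$ is immediate, and the genuine content of the upper bound has already been reduced to the two-case inequality \eqref{E526}. The remaining work is therefore to route the hypotheses into the right branch of \eqref{E526}, and separately to sharpen the trivial bound $g(F)\ge 1$ to $g(F)\ge 2$.

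For the upper bound I would first use \eqref{E503} to rewrite the hypothesis: when $g(C)=0$ we have $p_g(X)=d+1$, so $p_g(X)\ge 20$ becomes $d\ge 19$, while when $g(C)\ge 1$ the second line of \eqref{E526} applies for every $d$. Arguing by contradiction, suppose $\deg K_F\ge 10$. If $g(C)\ge 1$, then $g(C)=1$ by \thmref{THMBASE}, so $\deg K_C=0$ and $d\ge 1$; the second line of \eqref{E526} then forces $9-\deg K_F\ge 0$, a contradiction. If $g(C)=0$ and $d\ge 19$, I would distinguish $\deg K_F\le d$ from $\deg K_F>d$ and check that each affine inequality in \eqref{E526} is violated once $\deg K_F\ge 10$ (in the second subcase this uses $d\ge 19$ to beat the constant $18+\deg K_F$). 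Since $\deg K_F=2g(F)-2$ is even, $\deg K_F<10$ means $\deg K_F\le 8$, i.e.\ $g(F)\le 5$.

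For the lower bound I must exclude $g(F)\in\{0,1\}$. If $g(F)=1$ then $N\cdot F=0$; since every horizontal component of $N$ meets $F$ positively, $N$ is then purely vertical, so $f$ is a genus-one fibration. Because the genus of a general fibre is a birational invariant of the fibration, I would pass to a relatively minimal model of $X$ over $C$, which is an elliptic surface and hence of Kodaira dimension at most $1$, contradicting C2. The case $g(F)=0$ is ruled out the same way: a rational general fibre makes $X$ birationally ruled, of Kodaira dimension $-\infty$. That $f$ has connected fibres, which legitimizes this fibration-theoretic reasoning, is supplied by \thmref{THMBASE} under either hypothesis.

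The essential analytic input, Miyaoka--Yau, is already packaged in \eqref{E526}, so the upper bound is reduced to a finite check of linear inequalities plus a parity remark. The only step that genuinely lies outside the machinery built so far is the exclusion of $g(F)=1$, and this is where I expect the care to be needed: it is invisible to \eqref{E526} and requires the birational invariance of the fibre genus together with the fact that a genus-one fibration cannot produce a surface of general type.
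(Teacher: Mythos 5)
Your proposal is correct and follows essentially the same route as the paper: the upper bound is exactly the ``elementary check'' of \eqref{E526} that the paper leaves to the reader (after translating $p_g(X)\ge 20$ into $d\ge 19$ via \eqref{E503}, and using parity of $\deg K_F$), and your case analysis carries it out correctly. The lower bound $g(F)\ge 2$, which the paper leaves implicit in the attribution to Beauville, is correctly supplied by the standard observation that a surface fibered in connected curves of genus $\le 1$ has Kodaira dimension $\le 1$, contradicting C2.
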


Note that the lower bound $p_g(X)\ge 20$ in Beauville's theorem has been improved to $p_g(X)\ge 12$ in recent years.

After replacing $X$ by one of its birational models,
we may assume that
\begin{enumerate}
\item[C4.]
$f$ is regular and
\item[C5.]
$K_X$ is relatively nef over $C$ and we write $K_X$ as the sum of its moving part and fixed part
\begin{equation}\label{E500}
K_X = f^* D + N
\end{equation}
in $\Pic(X)$, where $D\in \Pic(C)$ is a bpf divisor on $C$ of $\deg D = d$ and
$N$ is an effective divisor on $X$.
\end{enumerate}

Now let us consider canonically fibered surfaces of relative genus $5$. That is, we further assume that
\begin{enumerate}
\item[C6.]
a general fiber $F$ of $f$ is a smooth projective
curve of genus $5$ and 
\item[C7.]
$p_g(X) >> 1$.
\end{enumerate}

Of course, we want to show that there are no canonically fibered surfaces satisfying C1-C7. Assuming such fibration exists, we have

\begin{thm}[Xiao]\label{THMG5}
If $f:X\to C$ is a canonically fibered surface satisfying C1-C6
and either $p_g(X)\ge 85$ or $g(C) = 1$,
then $K_X$ is given by 
\begin{equation}\label{E529}
K_X = f^* D + 8 \Gamma + V,
\end{equation}
where $V$ is effective, $f_* V = 0$ and $\Gamma$ is a section of $f$ satisfying 
\begin{equation}\label{E530}
\begin{aligned}
\boxed{-\frac{d}8 + \frac{17}8 \deg K_C}
&\le \deg K_C - K_X \Gamma = \Gamma^2 = -\frac{d}9 + \frac{\deg K_C}9 -
\frac{\Gamma V}9
\\
&\le -\frac{d}{9} + \frac{\deg K_C}{9}.
\end{aligned}
\end{equation}
\end{thm}

Here we consider $\Gamma V$ as the ``correction term'' in the estimate of $\Gamma^2$, which lies roughly between $-d/8$ and $-d/9$. It follows from \eqref{E530} that $\Gamma V$ is bounded by
\begin{equation}\label{E627}
0\le \Gamma V \le \frac{d}8 - \frac{145}8 \deg K_C.
\end{equation}

\section{Outline of Our Proof}\label{SECOUTLINE}

Suppose that $f: X\to C$ is a canonically fibered surface satisfying C1-C7.
Let us highlight a few key facts about $f$ obtained in the previous section:
\begin{itemize}
\item
$C$ is either rational or elliptic (see Theorem \ref{THMBASE}).
\item
$K_X = f^*D + 8\Gamma + V$, where $8\Gamma +V$ is the fixed part, $D$ is
a bpf divisor on $C$, $f_* V = 0$ and $\Gamma$ is a section
of $f$ (see \eqref{E529}); $\Gamma^2$ has a lower bound given by \eqref{E530}, which implies asymptotically
\begin{equation}\label{E105}
\liminf_{p_g(X)\to \infty} \frac{\Gamma^2}{p_g(X)}\ge -\frac{1}8.
\end{equation}
\item
The pushforward $f_* \omega_f$ of the dualizing sheaf of $f$
splits in a ``highly unbalanced'' way (see \eqref{E522}).
\end{itemize}

Another key fact, due to X.T. Sun \cite{S}, is that a general fiber of $f$ is not hyperelliptic. 
A non-hyperelliptic curve $F$ of genus $5$ has a ``nice'' canonical model:
it is mapped to $\PP^4$ by $|K_F|$ with the image being the complete intersection of three quadrics. Naturally, we may consider the map
\begin{equation}\label{E100}
\kappa: X\dashrightarrow \PP (f_*\omega_f)^\vee = \Proj \left(\oplus \Sym^\bullet f_*\omega_f\right)
\end{equation}
by $\omega_f\otimes f^* G$ for a sufficiently ample line bundle $G$ on $C$,
which simply maps $X/C$ fiberwise to $\PP^4$ by the canonical series.
Sun's theorem tells us that $\kappa$ maps $X$ birationally onto its image
$\kappa(X)$ and we call $\kappa(X)$ a {\em relative canonical model} of
$X$ over $C$.

Indeed, $\PP (f_*\omega_f)^\vee$ in the above construction
can be replaced by $\PP (f_* \LL)^\vee$ for any line bundle $\LL$ on $X$ satisfying
\begin{equation}\label{E103}
\LL\Big|_p \isom \omega_f\Big|_p = \omega_{X_p}
\end{equation}
at a general point $p\in C$. 
It turns out that 
\begin{equation}\label{E104}
\LL = \CO_X(8\Gamma)
\end{equation}
is the best choice of $\LL$ for our purpose,
where $\Gamma$ is the section of $f$ given as above.
We call the image $Y = \tau(X)$ under $\tau: X\dashrightarrow W = \PP(f_*\LL)^\vee$
a {\em pseudo relative canonical model} of $X$
over $C$.

Intuitively, $\tau$ contracts all fibers of $X_p$ that are disjoint from $\Gamma$. And since $\Gamma$ is a section of $f$,
it contracts all components of $X_p$ except the unique component meeting $\Gamma$. As a result, $Y$ might be highly singular. This is the cost we have to pay when we use $\LL$ instead of $\omega_f$ to map $X$. Fortunately, $Y$ is not ``too'' bad:
\begin{itemize}
\item
$\tau$ is regular (see Theorem \ref{THM3QUADRICS});
\item
$Y$ is normal and Gorenstein (see Theorems \ref{THMNORMAL} and \ref{THMDELTAP});
\item
$\tau$ maps $\Gamma$ to a section $\Gamma_Y$ of $Y/C$ and
$\Gamma^2 = \Gamma_Y^2$;
\item
$Y$ has only isolated double surface singularities of type \eqref{E642}.
\end{itemize}
The proof of these statements will be carried out in \S \ref{SEC3QUADRICS}, which forms the technical core of this paper. Since a general fiber $X_p$ of $X/C$ is a non-hyperelliptic curve of genus $5$, its image $Y_p = \tau(X_p)$ is cut out by $3$ quadrics in $W_p\cong \PP^4$. We can find a closed subscheme $Z\subset W$ (defined by \eqref{E553}) such that $Z$ is cut out by ``quadrics'' in $W$ and $Y$ is the only irreducible component of $Z$ that is flat over $C$ (see \eqref{E572}). With $Z$, we can carry out very explicit local analysis of $Y$ and thus prove the above statements on $Y$.

Furthermore, working with $Z$, we are able to derive a formula \eqref{E580} for $\Gamma^2$ using adjunction. This formula involves a local invariant $\delta$ of the singularities of $Y$.

In the last step of our proof, we relate $\delta$ to the moduli invariant
\begin{equation}\label{E107}
12 c_1(f_* \omega_f) - \omega_f^2
\end{equation}
of the fibration $f: X\to C$. If $f$ is semistable, \eqref{E107} counts
the total number of singularities of the fibers. In general, we have to
determine the contribution of each singular fiber of $f$ to \eqref{E107}.
This problem had been extensively studied by S.L. Tan in \cite{T1} and
\cite{T2}. For our purpose, $X$ is a resolution of
the double surface singularities of $Y$ and we just have to figure out
the contribution coming from this resolution.

In the end, we are able to give an upper bound
\eqref{E612} for $\Gamma^2$,
which turns out asymptotically
\begin{equation}\label{E106}
\limsup_{p_g(X)\to \infty} \frac{\Gamma^2}{p_g(X)}\le -\frac{3}{23}
\end{equation}
and consequently contradicts \eqref{E105}. This finishes the proof of Theorem \ref{THMGENUS5}.

\section{Intersections of Three Quadrics}\label{SEC3QUADRICS}

\subsection{A Pseudo Relative Canonical Model of $X/C$}

As outlined in \S \ref{SECOUTLINE}, we consider the rational map
\begin{equation}\label{E552}
\begin{tikzcd}
\tau: X \arrow[dashed]{r} & W = \PP (f_* \LL)^\vee
= \Proj\left(\oplus \Sym^\bullet f_* \LL\right),
\end{tikzcd} 
\end{equation}
where $\LL = \CO_X(8\Gamma)$.

More precisely, the map \eqref{E552} is given
by $|\LL \otimes f^* G|$ for a sufficiently ample line bundle
$G$ on $C$, which first maps $X$ to $\PP H^0(\LL \otimes f^*G)^\vee$.
This is a rational map factoring through
$\PP (f_*\LL)^\vee$ via
\begin{equation}\label{E101}
\begin{tikzcd}
& \PP (f_*\LL)^\vee\arrow{d}\\
X \arrow[dashed]{r} \arrow[dashed]{ur} & \PP H^0(\LL\otimes f^*G)^\vee
\end{tikzcd}
\end{equation}
where the map $\PP (f_* \LL)^\vee
\to \PP H^0(\LL\otimes f^*G)^\vee$ is induced by the
surjection
\begin{equation}\label{E102}
\begin{tikzcd}
H^0(\LL \otimes f^*G) \arrow[two heads]{r} & f_*\LL \Big|_p
\end{tikzcd} 
\end{equation}
for all $p\in C$.

Fiberwise, $\tau$ maps a general fiber $F = X_p$ to $\PP^4 = \PP H^0(K_F)^\vee$ by the canonical map
\begin{equation}\label{E516}
\begin{tikzcd}
F\arrow{r}{|K_F|} & \PP^4.
\end{tikzcd}
\end{equation}

If the general fibers $F$ of
$f$ are hyperelliptic, $\tau$ maps $X$ generically 2-to-1 onto its image.
Fortunately, this possibility has been ruled out by a result of X.T. Sun
\cite{S}:

\begin{thm}[Sun]\label{HYPERELLIPTIC}
If $f:X\to C$ is a canonically fibered surface satisfying C1-C6 and either $p_g(X)\ge 53$ or $g(C) = 1$,
then the general fibers of $f$ are not hyperelliptic.
\end{thm}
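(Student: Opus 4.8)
The plan is to argue by contradiction: assume the general fiber $F$ is a hyperelliptic curve of genus $5$ and derive a clash with the ``highly unbalanced'' splitting of $f_*\omega_f$ recorded in \thmref{THMBASE}. Since the hyperelliptic involution on a curve of genus $\ge 2$ is canonical, it propagates to a birational involution of $X$ over $C$; after replacing $X$ by a suitable smooth model I would realize $X$ as a double cover $\pi\colon X\to P$ of a ruled surface $\rho\colon P\to C$ branched along a divisor $R\equiv 2\Lambda$, with $\pi_*\CO_X=\CO_P\oplus\Lambda^{-1}$ and $K_X=\pi^*(K_P+\Lambda)$. Writing $P=\PP(\E)$ for a rank-$2$ bundle $\E$ on $C$ and $\ell$ for a fiber of $\rho$, the genus-$5$ condition forces $R\cdot\ell=12$, hence $\Lambda\cdot\ell=6$ and $(K_P+\Lambda)\cdot\ell=4$, matching the relative canonical degree.

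The key structural consequence is that $f_*\omega_f$ becomes a twist of a symmetric power of $\E$. Indeed, from $\pi_*\omega_X=\omega_P\oplus(\omega_P\otimes\Lambda)$ and $\rho_*\omega_P=0$ one gets $f_*\omega_f\cong\rho_*(\omega_{P/C}\otimes\Lambda)$, and since $\omega_{P/C}\otimes\Lambda$ has relative degree $4$ this pushforward is isomorphic to $\Sym^4\E\otimes B$ for a line bundle $B$ on $C$ (up to an effective modification supported at the finitely many points over which the branch divisor degenerates). Consequently the Harder--Narasimhan slopes of $f_*\omega_f$ form an arithmetic progression of length $5$ with common difference $\delta=\mu_{\min}(\E)-\mu_{\max}(\E)\le 0$, the five graded pieces being governed by the graded pieces of $\Sym^4\E$.

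I would then confront this with \thmref{THMBASE}. By \eqref{E522} we split $f_*K_X=f_*\omega_f\otimes K_C=L\oplus M$ with $h^0(L)=p_g(X)$ and $h^0(M)=0$, where by \eqref{E520} every summand of $M$ has degree in $\{-1,-2\}$ when $C\cong\PP^1$, and by \eqref{E555} every graded piece of $M$ has degree $0$ when $g(C)=1$. The slope of the top piece must grow like $p_g(X)$ while the next slope is already $\le 0$; since the five slopes lie in arithmetic progression, this forces the common difference to satisfy $-\delta\gtrsim p_g(X)$, and then the bottom slope drops to roughly $-3\,p_g(X)$. This is incompatible with the bound ``degree $\ge -2$'' (for $C\cong\PP^1$) or ``degree $=0$'' (for $g(C)=1$) once $p_g(X)$ is large, and is impossible outright when $g(C)\ge 1$; tracking the constants yields the threshold $p_g(X)\ge 53$. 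This contradiction shows that $F$ cannot be hyperelliptic.

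The main obstacle is the first step: producing a clean relative double-cover model and pinning down $f_*\omega_f$ precisely. One must relatively minimalize the hyperelliptic fibration, resolve the singularities of the branch divisor $R$, and control how the unavoidable blow-ups and singular fibers modify $\rho_*(\omega_{P/C}\otimes\Lambda)$ away from the naive $\Sym^4\E\otimes B$. Fortunately these corrections are effective and can only push the lower slopes further down, so they reinforce rather than weaken the contradiction; the remaining work is the bookkeeping needed to convert the qualitative ``slope-gap'' estimate into the explicit numerical bound.
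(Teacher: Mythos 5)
First, a point of reference: the paper does not prove this statement at all --- it is quoted as a black box from X.T. Sun's paper \cite{S} (``this possibility has been ruled out by a result of X.T. Sun''), so there is no internal proof to compare yours against, and your attempt must be judged on its own merits.

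Judged so, it has a genuine gap, located exactly where you wrote ``fortunately these corrections \dots reinforce rather than weaken the contradiction.'' After the canonical resolution of the relative double cover, $f_*\omega_f$ is a full-rank \emph{subsheaf} of $\Sym^4\E\otimes B$: the resolution replaces $\Lambda$ by $\sigma^*\Lambda-\sum\lfloor m_i/2\rfloor E_i$, and since $\omega_{\widetilde P/C}\otimes\widetilde\Lambda\subset \sigma^*(\omega_{P/C}\otimes\Lambda)$ the corrections only \emph{subtract} from the pushforward. Hence the arithmetic-progression structure of the slopes of $\Sym^4\E\otimes B$ yields only \emph{upper} bounds on the five degrees of $f_*\omega_f$, while the splitting $f_*K_X=L\oplus M$ of \thmref{THMBASE}, with $\deg L\approx p_g$ and the summands of $M$ of degree in $\{-2,-1\}$ (resp.\ degree $0$ when $g(C)=1$), supplies \emph{lower} bounds. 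An upper bound in arithmetic progression whose top term is $\ge d+1$ and whose remaining terms must only exceed roughly $0$ is perfectly consistent: take $\E$ semistable, so all five terms of the progression equal $4\mu(\E)+\deg B\ge d+1$, and let the torsion quotient $\bigl(\Sym^4\E\otimes B\bigr)/f_*\omega_f$ absorb the excess in the four lower factors. No contradiction results. Your claim that pushing the lower slopes of $f_*\omega_f$ further down strengthens the argument is exactly backwards --- that is precisely what reconciles the progression with the observed unbalanced splitting. To rescue the approach you would need an a priori upper bound, small compared to $d$, on the total length of that torsion quotient (equivalently on the singularity index of the branch divisor $R$); such a bound is the real content of the theorem, is nowhere supplied, and in particular the threshold $p_g(X)\ge 53$ is asserted rather than derived.
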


Thus, $\tau$ sends $X$ birationally onto its image.
Let us study its image, more precisely,
the proper transform $Y = \tau(X)$ of $X$ under $\tau$. 
We consider $Y/C$ as a pseudo relative canonical model of $X/C$.

The classical Noether's theorem tells us that
the canonical image of a non-hyperelliptic curve $F$ of genus $5$ under
\eqref{E516} is the intersection
of three quadrics. Indeed, we have the exact sequence
\begin{equation}\label{E514}
\begin{tikzcd}
0 \arrow{r} & H^0(I_F(2)) \arrow{r} & \Sym^2 H^0(K_F) \arrow{r} & H^0(2K_F) \arrow{r} & 0
\end{tikzcd}
\end{equation}
where $I_F$ is the ideal sheaf of the image of $F$ in $\PP^4$; it follows that
\begin{equation}\label{E515}
\dim H^0(I_F(2)) = 3.
\end{equation}
Namely, the image $F$ of under the canonical map \eqref{E516}
is cut out by three quadrics in $\PP^4$.

The family version of \eqref{E514} becomes
\begin{equation}\label{E518}
\begin{tikzcd}
0 \arrow{r} & \QQ \arrow{r} & \Sym^2 f_*\LL \arrow{r} & f_* (\LL^{\otimes 2}).
\end{tikzcd}
\end{equation}
Note that \eqref{E518} is only left exact, a priori; the map
$\Sym^2 f_*\LL \xrightarrow{} f_* (\LL^{\otimes 2})$ is generically surjective
by Sun's theorem \ref{HYPERELLIPTIC} and hence its kernel $\QQ$ is a vector bundle of rank $3$
over $C$. In addition, since $f_* (\LL^{\otimes 2})$ is torsion-free, we have an injection
\begin{equation}\label{E519}
\begin{tikzcd}
0 \arrow{r} & \QQ_p = \QQ\Big|_p\arrow{r} & \Sym^2 f_*\LL \Big|_p
\end{tikzcd}
\end{equation}
over every point $p\in C$. Thus $\QQ_p$ can be regarded as a subspace of $H^0(I_F(2))$ of dimension
$3$ for every fiber $F = X_p$. In other words, $\QQ_p$ is a net ($2$-dimensional linear series)
of quadrics passing through $Y_p$. Note that $Y_p$ is contained
in the base locus $\Bs(\QQ_p)$ of $\QQ_p$ but it is not necessarily
true that
\begin{equation}\label{E592}
Y_p = \Bs(\QQ_p)
\end{equation}
for every $p\in C$; \eqref{E592} holds, a priori,
only for a general point $p\in C$.

We may regard $\QQ$ as a subbundle of $\pi_* \CO_W(2) \isom \Sym^2 f_* \LL$, where
\begin{equation}
\begin{tikzcd}
\pi: W = \PP (f_*\LL)^\vee\arrow{r} & C
\end{tikzcd}
\end{equation}
is the projection and $\CO_W(1)$ is the tautological
bundle. Thus, $H^0(\QQ \otimes G)$ is a subspace of $H^0(\CO_W(2) \otimes \pi^*G)$
for $G\in \Pic(C)$. 

We abuse notation a little by using $\Bs (\QQ \otimes G)$ to denote the base locus of $H^0(\QQ \otimes G)$ as a subspace
of $H^0(\CO_W(2) \otimes \pi^*G)$
and defining
\begin{equation}\label{E553}
Z = \bigcap_{G\in \Pic(C)} \Bs(\QQ \otimes G) \subset W.
\end{equation}
Namely, $Z$ is the closed subscheme of $W$ whose fiber $Z_p$ is the intersection of the three quadrics generating $\QQ_p$ over each $p\in C$.
It is also obvious that $Z = \Bs(\QQ\otimes G)$ for a sufficiently ample line bundle $G$ on $C$.

Clearly, $Y\subset Z$.
Indeed, it is easy to see that $Y$ is the only irreducible component of $Z$ that is flat over $C$; the other irreducible components of $Z$, if any, must be
supported in some fibers $W_p$ of $W$ over $C$. However, it is not clear whether $\dim Z = 2$ and whether $Z$ has components other than $Y$.
The following theorem answers these questions.

\begin{thm}\label{THM3QUADRICS}
Let $f:X\to C$ be a flat family of curves of genus $5$ over a smooth
curve $C$ with $X$ smooth and $X_p = f^{-1}(p)$ non-hyperelliptic for $p\in C$ general and
let $\LL$ be an effective line bundle on $X$ with $\Lambda\in |\LL|$ satisfying
\begin{itemize}
 \item \eqref{E103} holds for $p\in C$ general,
 \item $\Lambda\cap X_p$ is a finite set of points
lying on a unique component $B_p$ of $X_p$ for each $p\in C$, and
\item $X_p$ is smooth at each point in $\Lambda\cap X_p$ for all $p\in C$.
\end{itemize}
Let
$\tau$, $W$, $\QQ$, $Y$ and $Z$ be defined as above. Then
\begin{itemize}
\item
$\tau$ is regular and $\tau_* E = 0$ for all components $E\ne B_p$ of $X_p$
and all $p\in C$.
\item
$Z$ is a local complete intersection (l.c.i) of pure dimension $2$.
\item
$Y$ is the only irreducible component of $Z$ that is flat over $C$.
\item
Every quadric of $\QQ_p$ has rank $\ge 3$ for all $p\in C$,
where the rank of a quadric in $\PP^N$ is the rank of its defining equation
as a quadratic form.
\item
$\dim(Z\cap W_p) = 2$, i.e., $Z$ has an irreducible component
contained in $W_p$ if and only if $\Bs(\QQ_p)$ is a cone over a smooth rational
cubic curve in $\PP^4$.
\item
For each $p\in C$,
$\tau$ maps the component $B_p$ either birationally
onto a curve of degree $8$ and arithmetic genus $5$
or $2$-to-$1$ onto a rational normal curve in $\PP^4$.
\item 
$\widehat{Y}_p$ is integral (reduced and irreducible) for all $p\in C$
and $Y_p\isom \widehat{Y}_p$ if and only if $\tau$ maps $B_p$ birationally
onto its image $\tau(B_p)$,
where $\nu: \widehat{Y}\to Y\subset W$ is the normalization of $Y$.
\item
The map $\nu^{-1}\circ \tau: X\to \widehat{Y}$ is a local isomorphism
in an open neighborhood of $\Lambda$.
\end{itemize}
\end{thm}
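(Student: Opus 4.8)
The plan is to reduce every assertion to a study of the fibers $X_p$ and then to propagate the fiberwise conclusions along $C$ by flatness and semicontinuity. The starting point is that the fiber of $\tau$ over $p$ is governed by the five-dimensional space $f_*\LL\tensor k(p)$, which, by pushing forward $0\to \LL\tensor f^*\CO_C(-p)\to \LL\to \LL|_{X_p}\to 0$ and applying the projection formula, injects into $H^0(X_p,\LL|_{X_p})$. Hence $\tau|_{X_p}$ is given by a five-dimensional, necessarily nondegenerate, linear subsystem of $|\LL|_{X_p}|$ mapping $X_p$ into $W_p=\PP^4$, and $\tau$ is regular at $x\in X_p$ exactly when $x$ is not a base point of this $g^4_8$. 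Thus the whole theorem is governed by the projective geometry of these canonical-type degree-$8$ systems on the genus-$5$ fibers and of the nets of quadrics through their images.

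For the first and sixth items I would proceed as follows. Since $\Lambda\cap X_p$ lies on $B_p$ and avoids the singular locus of $X_p$, while every node $B_p\cap E$ is a singular point, $\Lambda$ is disjoint from each component $E\ne B_p$; therefore $\LL|_E\isom\CO_E$, the map $\tau$ contracts $E$ (so $\tau_*E=0$), and, after gluing sections across the nodes, base-point-freeness of $\tau$ reduces to that of $\LL|_{B_p}$. On $B_p$ the bundle $\LL|_{B_p}$ has degree $8$, and a Riemann--Roch computation (using $\chi(\LL|_{X_p})=4$ and $h^0\ge 5$ by semicontinuity from the canonical general fiber \eqref{E103}) together with Serre duality on the Gorenstein curve $X_p$ pins down $\LL|_{X_p}$ as a canonical-type bundle and shows $\LL|_{B_p}$ is base-point-free; for integral $X_p$ this forces $\LL|_{X_p}\isom\omega_{X_p}$, and the reducible case is treated componentwise. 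The induced map on $B_p$ is then a nondegenerate $g^4_8$, so by the classical analysis of such systems on a genus-$5$ curve it is either birational onto a curve of degree $8$ and arithmetic genus $5$ or $2$-to-$1$ onto a rational normal quartic, the latter being exactly the hyperelliptic-type degeneration, which \thmref{HYPERELLIPTIC} rules out at the general fiber.

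Next I would treat the fourth, second, fifth and third items together, since all are governed by the net $\QQ_p$. Because $\tau(B_p)$ is irreducible and nondegenerate, no quadric of $\QQ_p$ can have rank $\le 2$ (such a quadric is a union of hyperplanes and would confine $\tau(B_p)$ to a hyperplane), which gives the fourth item. Rank $\ge 3$ makes every quadric of $\QQ_p$ irreducible, so intersecting three of them shows $\dim\Bs(\QQ_p)\le 2$; hence $Z$, being cut out in the smooth fivefold $W$ by the three quadric sections in $\QQ\tensor G$, has pure dimension $2$ and is a local complete intersection. The only component of $Z$ dominating $C$ is $Y=\tau(X)$, which is flat over $C$, while any other component sits in a single fiber $W_p$; such a fiber component occurs exactly when $\dim\Bs(\QQ_p)=2$, and the classification of nets of quadrics in $\PP^4$ with two-dimensional base locus identifies this locus as a cone over a smooth rational cubic curve. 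This yields the second, third and fifth items.

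Finally, for the seventh and eighth items I would use that, by \thmref{HYPERELLIPTIC}, $\tau\colon X\to Y$ is birational, so $\nu^{-1}\circ\tau\colon X\to\widehat Y$ is a birational morphism from the smooth surface $X$ onto the normal surface $\widehat Y$. Since $X_p$ is connected and the non-$B_p$ components are contracted, $\widehat Y_p$ is the normalized image of $B_p$, hence connected and reduced, i.e.\ integral; and $Y_p\isom\widehat Y_p$ fails exactly in the $2$-to-$1$ case above, where $\nu$ glues the two sheets, that is, exactly when $\tau|_{B_p}$ is not birational. Near $\Lambda$, where $X_p$ is smooth and the $g^4_8$ separates points and tangents, $\tau$ is a local isomorphism onto a smooth point of its image, so $\nu$ is an isomorphism there, giving the eighth item. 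The main obstacle, to my mind, lies entirely in the degenerate fibers: establishing base-point-freeness and the birational-versus-$2$-to-$1$ dichotomy on $B_p$ when $X_p$ is reducible or non-reduced (so that $\LL|_{X_p}$ need not literally equal $\omega_{X_p}$ and one must argue componentwise), and pinning down precisely when $\QQ_p$ acquires a two-dimensional base locus as the cone over a rational cubic; once these two projective-geometric facts are in hand, the remaining assertions follow formally by flatness and semicontinuity.
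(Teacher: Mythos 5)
Your overall architecture tracks the paper's: reduce to the fiberwise linear system $f_*\LL\big|_p \hookrightarrow H^0(X_p,\LL|_{X_p})$, rule out rank $\le 2$ quadrics via non-degeneracy of $\tau(B_p)$, classify the degenerate nets (cubic cones), and split into the birational versus $2$-to-$1$ cases. But the two steps you yourself flag as ``the main obstacle'' are precisely the substance of the proof, and your sketches for them would not go through as stated. First, non-degeneracy of $\tau(B_p)$: what must be shown is that no nonzero section of the $5$-dimensional space $f_*\LL\big|_p$ vanishes identically on $B_p$ (taken with its multiplicity in $X_p$); your ``componentwise Riemann--Roch plus gluing across the nodes'' only controls the system on all of $X_p$, whereas the restriction $H^0(X_p,\LL|_{X_p})\to H^0(B_p,\LL|_{B_p})$ can have a large kernel when $X_p$ is reducible or non-reduced, and the question is whether the $5$-dimensional subspace meets it. The paper isolates exactly this point in \lemref{LEMFOC}: any section of $\LL\tensor f^*G$ vanishing on the divisor $D=B_p$ must vanish on the whole fiber, proved via the auxiliary divisor $A$ of maximal vanishing orders along the components of $F$ and an intersection-theoretic argument forcing $N'-N=0$. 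Nothing in your proposal substitutes for this.

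Second, the claim that in the birational case $\tau(B_p)$ has degree $8$ and arithmetic genus $5$ cannot be extracted from ``the classical analysis of $g^4_8$'s on a genus-$5$ curve,'' because $B_p$ is a single component of a possibly very degenerate fiber and its geometric genus can be anything $\le 5$; a priori the image could be a non-degenerate curve of degree between $5$ and $8$. The paper first proves $p_a(\tau(B_p))\ge 5$ by a limit argument (\lemref{LEMGENUS}: after a base change and resolution, the exceptional trees over the indeterminacy points attach to the proper transform $\widehat J$ at single points, so $p_a(J)\ge p_a(\widehat Y_0)=g(Y_t)$), and then bounds the degree from below by $8$ via the Hilbert polynomial of $J$ inside $R=\Bs(\QQ_p)$, treated separately when $\dim R=1$ (using $h^1(\CO_R(1))=1$) and when $R$ is a cubic cone (computing on its resolution $\BF_3$). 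Only after this dichotomy is in place do the regularity of $\tau$ along every fiber, the integrality of $\widehat Y_p$, and the local isomorphism near $\Lambda$ follow. As written, your proposal leaves both of these load-bearing steps unproved.
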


For starters, it is obvious that $\tau$ is regular outside of $\Lambda$ and
it contracts all curves $E\subset X$ satisfying $E . \Lambda = 0$ and hence
contracts all components of $X_p$ other than $B_p$.
To show that $\tau$ is regular everywhere,
we make two key observations regarding $\tau(B_p)$:
\begin{equation}\label{E545}
\tau(B_p)\subset \PP^4 \text{ is non-degenerate, i.e., }
\tau(B_p) \not\subset P\isom \PP^3\subset \PP^4
\end{equation}
for all hyperplanes $P$ of $\PP^4$ and
\begin{equation}\label{E546}
p_a(\tau(B_p)) \ge 5 \text{ if $\tau$ maps $B_p$ birationally
onto $\tau(B_p)$}
\end{equation}
where $p_a(A)$ is the arithmetic genus of a curve $A$.
The first observation \eqref{E545}
is a consequence of the following lemma.

\subsection{A Lemma on Fibrations of Curves}

\begin{lem}\label{LEMFOC}
Let $f: X\to C$ be a flat projective morphism with connected fibers from
a smooth variety $X$ to a smooth curve $C$. Let
$D, E$ and $G$ be effective divisors on $X$ such that
\begin{itemize}
\item
$\supp(D) \not\subset \supp(E)$,
\item
$D + E = F$ for a fiber $F = f^{-1}(p)$ of $f$ and
\item
$E\cap G = \emptyset$.
\end{itemize}
Then $H^0(\CO_X(G - D)) = H^0(\CO_X(G-F))$.  
\end{lem}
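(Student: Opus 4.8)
The plan is to interpolate between $D$ and $F$ by an increasing chain of divisors and to show that the space of global sections is unchanged at each step. First note that since $E$ is effective we have $G-F \le G-D$, giving an inclusion of sheaves $\CO_X(G-F)\hookrightarrow \CO_X(G-D)$ and hence $H^0(\CO_X(G-F))\subseteq H^0(\CO_X(G-D))$; the content of the lemma is the reverse inclusion, which will drop out of the filtration automatically.

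Choose a chain $D = D_0 < D_1 < \cdots < D_k = F$ in which $D_{j+1} = D_j + \Gamma_{i_j}$ for a reduced irreducible component $\Gamma_{i_j}$ of the fiber $F$ lying in $\supp E$, so that the components added in total make up $E$. At each step the standard restriction sequence
\[
0 \to \CO_X(G-D_{j+1}) \to \CO_X(G-D_j) \to \CO_{\Gamma_{i_j}}\bigl((G-D_j)|_{\Gamma_{i_j}}\bigr) \to 0
\]
shows $H^0(\CO_X(G-D_j)) = H^0(\CO_X(G-D_{j+1}))$ as soon as the restricted line bundle on the integral curve $\Gamma_{i_j}$ has negative degree. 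Writing $R_j = F - D_j$ for the part of $E$ not yet added (an effective vertical divisor with $R_0 = E$), and using $F\cdot \Gamma = 0$ for every fiber component $\Gamma$ together with $G\cdot \Gamma = 0$ (valid because $\Gamma\subseteq \supp E$ is disjoint from $G$ by hypothesis), one computes $(G-D_j)\cdot \Gamma_{i_j} = R_j\cdot \Gamma_{i_j}$. So it suffices, whenever $R_j \neq 0$, to select a component $\Gamma_{i_j}$ of $R_j$ with $R_j\cdot \Gamma_{i_j} < 0$, since a line bundle of negative degree on an integral projective curve has no nonzero sections.

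This is exactly where connectedness of the fibers enters, through Zariski's lemma: the intersection form on the components of $F$ is negative semi-definite with radical spanned by $F$, so any vertical divisor $R$ that is not a positive rational multiple of $F$ satisfies $R^2 < 0$. The hypothesis $\supp(D)\not\subset\supp(E)$ guarantees that $F$ has a component outside $\supp E \supseteq \supp R_j$, whence $\supp R_j \subsetneq \supp F$ and $R_j^2 < 0$ for every nonzero $R_j$. Expanding $R_j^2 = \sum_\Gamma (\mult_\Gamma R_j)(R_j\cdot \Gamma)$ with all multiplicities positive then forces $R_j\cdot \Gamma < 0$ for at least one component $\Gamma$ of $R_j$; removing one copy of that $\Gamma$ produces $R_{j+1}$ and continues the chain. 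Telescoping the equalities $H^0(\CO_X(G-D_j)) = H^0(\CO_X(G-D_{j+1}))$ over $j$ yields $H^0(\CO_X(G-D)) = H^0(\CO_X(G-F))$.

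I expect the main obstacle to be the bookkeeping of the combinatorial step rather than any hard estimate: one must check at each stage that the chosen component lies in $\supp E$ (so that $G\cdot\Gamma = 0$ and $R_{j+1}\ge 0$) and that $R_j$ remains a proper subdivisor of $F$ so that Zariski's lemma applies. All of the geometric input is packaged in Zariski's lemma, and the rest is a finite descent on $\deg R_j$.
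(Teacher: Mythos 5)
Your argument is correct in the surface case and reaches the conclusion by a genuinely different route from the paper's. You run an ascending induction $D = D_0 < D_1 < \cdots < D_k = F$, peeling on one component of $E$ at a time and killing the quotient $H^0(\CO_{\Gamma_{i_j}}((G-D_j)|_{\Gamma_{i_j}}))$ by a degree count, with Zariski's lemma invoked at every step to produce a component $\Gamma\subset R_j$ with $R_j\cdot\Gamma<0$; this is the classical ``connectedness of fibers'' argument for vertical divisors, and the bookkeeping you worry about does go through ($R_j\le E$ at every stage, so $G\cdot\Gamma=0$ and $\supp R_j\subsetneq\supp F$ persist). The paper instead works top-down: it takes $A$ to be the maximal effective divisor supported on $F$ along which every section of $\CO_X(G-D)$ vanishes, uses maximality to produce, for each component $\Gamma$ of $F$, a section vanishing to order exactly $a_\Gamma$ along $\Gamma$ and hence a nonzero section of $\CO_\Gamma(G-A)$, which yields $(G-A)\Gamma\ge 0$ for all $\Gamma$ simultaneously; the negativity of the fiber intersection form then enters only once, through the decomposition $A=M+N$, $F=M'+N'$ and the inequality $(N'-N)^2\ge 0$. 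The two proofs consume the same inputs (restriction sequences to fiber components plus negative semi-definiteness of the fiber lattice), but yours is more constructive and arguably more standard, while the paper's single global application of negativity replaces your step-by-step descent. One point you should add: the lemma is stated for $X$ of arbitrary dimension, and your intersection-theoretic steps (the self-intersection $R_j^2$, the degree of a line bundle on an integral curve) presuppose $\dim X=2$; the paper disposes of this by cutting $X$ with general sections of a sufficiently ample line bundle to reduce to the surface case, and your write-up needs the same (routine) reduction.
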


It was suggested to the author by the referee that the above lemma follows easily from the following one:

\begin{lem}\label{LEMCHENMENG}
Let $X$ be a smooth projective surface and $E$ be an effective divisor supported on irreducible components with negative intersection matrix. For a divisor $A$ on $X$, if $A\Gamma = 0$ for all irreducible components $\Gamma$ of $E$, then $H^0(A) = H^0(A+E)$.
\end{lem}

\begin{proof}
We argue by induction on the number of components in $E$ counted with multiplicity. There is nothing to prove if $H^0(A + E) = 0$. Suppose that $A+E$ is effective. We write $|A + E| = Z + |M|$ with $Z$ the fixed part and $M$ the moving part of $|A + E|$. Since $ZE = (A+E)E - ME = E^2 - ME < 0$, $Z$ and $E$ have some common components. Suppose that $\Gamma$ is a common irreducible component of $Z$ and $E$. Then $H^0(A+E) = H^0(A+(E-\Gamma))$ and we are done by induction.
\end{proof}

Then Lemma \ref{LEMFOC} follows easily from \ref{LEMCHENMENG}: first we cut down $X$ by sufficiently ample divisors to $\dim X=2$ and then apply \ref{LEMCHENMENG}
by setting $A = G-F$.

\subsection{Proof of Theorem \ref{THM3QUADRICS}}

Let us see how to derive \eqref{E545} from Lemma \ref{LEMFOC}.
By the lemma, we have
\begin{equation}\label{E547}
\begin{split}
H^0(\LL\otimes f^*G \otimes \CO_X(-D))
&= H^0(\LL\otimes f^*G \otimes \CO_X(-F))\\
& \isom H^0(f_* \LL \otimes G(-p))
\end{split}
\end{equation}
where $D = B_p$ and $F = X_p$. Therefore, we have the diagram
\begin{equation}\label{E548}
\begin{tikzcd}[column sep=small]
0 \arrow{r} & H^0(f_*\LL \otimes G(-p)) \arrow[equal]{d}\arrow{r} & H^0(f_* \LL \otimes G) \arrow[equal]{d}\arrow{r} & f_*\LL \Big|_p \arrow{r}\arrow{d} & 0\\
0 \arrow{r} & H^0(\LL(-D)\otimes f^* G) \arrow{r} & H^0(\LL \otimes f^* G)
\arrow{r} & H^0(D, \LL)
\end{tikzcd}
\end{equation}
from which we conclude that the map
\begin{equation}\label{E549}
\begin{tikzcd}
f_*\LL \Big|_p\arrow[hook]{r} & H^0(D,\LL)
\end{tikzcd}
\end{equation}
is an injection. It follows that $\tau(D) = \tau(B_p)$ is a non-degenerate curve
in $\PP^4$. And since $\tau(B_p)$ is irreducible,
$\QQ_p$ does not contain a quadric of rank $\le 2$, i.e.,
the union of two hyperplanes.
This implies that every member of $\QQ_p$ has rank $\ge 3$,
and a general member of $\QQ_p$ has rank $\ge 4$.
It follows that $\dim \Bs(\QQ_p) \le 2$ for all $p\in C$. 
At a general point $p\in C$ where $X_p$ is a smooth non-hyperelliptic curve,
we know that $X_p \isom Y_p = Z_p = \Bs(\QQ_p)$ has dimension $1$.
Consequently, $Z$
has pure dimension $2$, $Y$ is the only component of $Z$ flat over $C$
and $Z$ has a component supported in $W_p$ precisely at the points $p$ where
$\dim \Bs(\QQ_p)$ ``jumps''. The following lemma tells us exactly how three
linearly independent quadrics fail to meet properly.

\begin{lem}\label{LEM729000}
Let $\QQ\subset \PP H^0(\CO_{\PP^n}(2))$
be a net of quadrics in $\PP^n$.
If the base locus $\Bs(\QQ)$ of $\QQ$ has dimension $> n - 3$, i.e., the three quadrics
generating $\QQ$ fail to meet properly,
then one of the following holds
\begin{itemize}
\item every quadric $Q\in \QQ$ has rank $\le 4$ or
\item $\QQ$ contains a pencil of quadrics of rank $\le 2$.
\end{itemize}
\end{lem}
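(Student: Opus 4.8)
The plan is to locate an irreducible component $W$ of $\Bs(\QQ)$ with $\dim W\ge n-2$ (one exists by hypothesis) and to run the argument according to $\dim W$ and the dimension of the linear span $\langle W\rangle$. Two elementary remarks drive everything. First, a quadric of rank $\le 2$ is exactly a reducible quadric $H\cup H'$, so exhibiting a one-dimensional family of such quadrics inside $\QQ$ yields the second alternative. Second, any quadric $Q$ containing a fixed linear subspace $\PP^{n-2}=\{x_0=x_1=0\}$ has $\rank Q\le 4$: its equation lies in the ideal $(x_0,x_1)$, so $Q=\{x_0\ell_0+x_1\ell_1=0\}$ for linear forms $\ell_0,\ell_1$, whence all partials $\partial Q/\partial x_i$ lie in $\Span(x_0,x_1,\ell_0,\ell_1)$ and $\rank Q\le 4$. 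I would first dispose of the case $\dim W=n-1$: then $W$ is an irreducible hypersurface lying on every generator $Q_i$, so either $\deg W=1$ or $\deg W=2$ with $W=Q_0=Q_1=Q_2$, contradicting linear independence; hence $\deg W=1$, $W$ is a hyperplane $\{h=0\}$ dividing each $Q_i$, and every member of $\QQ$ equals $h$ times a linear form, so the first alternative holds.

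Assume now $\dim W=n-2$ and set $P=\langle W\rangle$. If $P=\PP^{n-2}$, then $W=P$ is linear, every member of $\QQ$ contains this $\PP^{n-2}$, and the second remark gives the first alternative. If $P=\PP^{n-1}$ is a hyperplane, I would consider the linear restriction map $\phi\colon\QQ\to H^0(\CO_P(2))$, $Q\mapsto Q|_P$. For any $Q$ not containing $P$, the quadric $Q\cap P\subset P$ contains $W$, which spans $P$ and is therefore a hypersurface of $P$ of degree $\ge 2$; thus $\deg W=2$ and $Q\cap P=W$, so $\phi(Q)$ is a fixed multiple of the equation of $W$ and $\IM\phi$ is at most one-dimensional. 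Hence $\ker\phi$ has dimension $\ge 2$ and $\PP(\ker\phi)\subset\QQ$ is a pencil (or all of $\QQ$) consisting of quadrics that contain the hyperplane $P$, so of rank $\le 2$; this is the required pencil.

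The crux is the non-degenerate case $P=\PP^n$, where $W$ is irreducible, non-degenerate, of codimension $2$. Here I would invoke the classical minimal-degree bound $\deg W\ge\codim W+1=3$. Choosing two general members $Q_0,Q_1\in\QQ$, they meet properly (otherwise they share a hyperplane and the whole net is reducible), so $Q_0\cap Q_1$ is a complete intersection of degree $4$ and dimension $n-2$ containing $W$. Since a $(2,2)$ complete intersection lies on only a pencil of quadrics whereas $W$ lies on the net $\QQ$, $W$ is a proper component; as $\deg W=3>4/2$ it occurs with multiplicity one, so the residual cycle has degree $1$ and dimension $n-2$, i.e.\ it is a linear $\PP^{n-2}$ contained in both $Q_0$ and $Q_1$, hence in every member of $\langle Q_0,Q_1\rangle$. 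By the second remark the whole pencil has rank $\le 4$; letting $Q_0,Q_1$ vary shows a general member of $\QQ$ has rank $\le 4$, and as this is a closed condition every member does, giving the first alternative.

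The main obstacle is exactly this non-degenerate case, and the only non-elementary input is the minimal-degree inequality $\deg W\ge\codim W+1$ for non-degenerate irreducible varieties, together with the linkage computation identifying the residual of $W$ in $Q_0\cap Q_1$ as a $\PP^{n-2}$; all remaining cases reduce to the two linear-algebra remarks on ranks of quadrics through a $\PP^{n-2}$ and through a hyperplane. I would finally note that the argument is uniform in $n$, so no separate induction on dimension is needed.
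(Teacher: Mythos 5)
Your proof is correct, but it takes a genuinely different route from the paper's. The paper argues by contraposition in essentially two lines: if some $Q\in\QQ$ has rank $\ge 5$, then $\Pic(Q)=\BZ$ is generated by the hyperplane class, so the excess component of $\Bs(\QQ)$ is a divisor on $Q$ cut out by a single hyperplane $H_0$; this forces $Q'\equiv H_0L'$ and $Q''\equiv H_0L''$ modulo $Q$, and the pencil $H_0\cdot\langle L',L''\rangle\subset\QQ$ consists of quadrics of rank $\le 2$. You instead classify a maximal-dimensional component $W$ of the base locus by its dimension and linear span, and in the essential non-degenerate case combine the minimal-degree bound $\deg W\ge \codim W+1$ with liaison inside the $(2,2)$ complete intersection $Q_0\cap Q_1$ to produce a residual linear $\PP^{n-2}$, from which rank $\le 4$ follows by your observation on quadrics through a codimension-two linear space. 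The paper's approach buys brevity and a single uniform mechanism (the divisor class group of a rank-$\ge 5$ quadric); yours buys more refined geometric output, since it identifies $W$ as a hyperplane, a linear $\PP^{n-2}$, a degenerate quadric, or a non-degenerate cubic of codimension two --- information the paper must extract separately afterwards when it shows that $\Bs(\QQ_p)$ is a cone over a rational normal cubic. One sentence of yours deserves tightening: when you take general $Q_0,Q_1$ and set aside the possibility that they fail to meet properly, that failure is not absurd but forces both to contain a common hyperplane, hence to have rank $\le 2$; since rank $\le 2$ is a closed condition this makes every member of the net have rank $\le 2$, which lands in both alternatives, so the case should be recorded as concluding the proof rather than as impossible.
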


\begin{proof}
Suppose that a general quadric of $\QQ$ has rank $\ge 5$ and
$\QQ$ is generated by $Q, Q'$ and $Q''$ with $\rank(Q) \ge 5$.
Since $\Pic(Q) = \BZ$, $Q\cap Q' \cap Q''$ has dimension $>\dim Q - 2$ if and only
if $\Lambda \subset Q'\cap Q''$ for some hypersurface $\Lambda \subset Q$. Clearly,
$\Lambda$ is a hyperplane section, $Q' = \Lambda \cup \Lambda'$
and $Q'' = \Lambda \cup \Lambda''$ on $Q$. It follows that $\QQ$ contains a pencil
of quadrics of rank $\le 2$.
\end{proof}

By the lemma, we see that $\dim \Bs(\QQ_p) = 2$ only if
either every quadrics of $\QQ_p$ has rank $\le 4$ or $\QQ_p$ has
a quadric of rank $\le 2$. We have eliminated the latter by \eqref{E545}. This leaves
us the former: every quadric of $\QQ_p$ is a cone of a common vertex
over a quadric in some $P\isom \PP^3\subset \PP^4$.
It is not hard to see further that $\Bs(\QQ_p)$ is a cone over
a rational normal curve, i.e., a smooth rational cubic curve in $P$.

Since $\tau(B_p)$ is non-degenerate in $\PP^4$, it has degree at least $4$.
Therefore, $\tau$ maps $B_p$ either birationally or $2$-to-$1$ onto to $\tau(B_p)$. If it is the latter, $\tau(B_p)$ must have degree $4$ and hence
it is a rational normal curve in $\PP^4$.

Suppose that $\tau$ maps $B_p$ birationally onto
$J = \tau(B_p)\subset \PP^4$. Let us first justify \eqref{E546}, i.e.,
show that 
\begin{equation}\label{E560}
p_a(J) \ge 5.
\end{equation}
This follows from the following lemma.

\begin{lem}\label{LEMGENUS}
Let $X$ and $Y$ be two surfaces proper and flat over the unit disk
$\Delta = \{|t| < 1\}$ with $X_t$ and $Y_t$ smooth and irreducible
for $t\ne 0$.
Let $\tau: X\dashrightarrow Y$ be a birational map with the diagram
\begin{equation}\label{E561}
\begin{tikzcd}
X \arrow[dashed]{r}{\tau}\arrow{d} & Y\arrow{dl}\\
\Delta
\end{tikzcd}
\end{equation}
such that
\begin{itemize}
\item
$\tau$ is regular on $X\backslash \Sigma$ for a finite set $\Sigma$ of points on $X_0$,
\item
$X_0$ is smooth at each point $x\in \Sigma$,
\item
$\tau_* I \ne 0$ for each component $I$ of $X_0$ satisfying $I\cap \Sigma\ne \emptyset$
and
\item
$J = \tau_* X_0$ is reduced.
\end{itemize}
Then $p_a(J) \ge g(Y_t)$ for $t\ne 0$.
\end{lem}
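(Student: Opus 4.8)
The plan is to compare $p_a(J)$ with the arithmetic genus of the central fibre $Y_0$ of the projection $f_Y\colon Y\to\Delta$. Since $Y$ is flat over $\Delta$ and $Y_t$ is smooth and irreducible for $t\ne 0$, the Euler characteristic $\chi(\CO_{Y_0})=\chi(\CO_{Y_t})=1-g(Y_t)$ is constant, so $p_a(Y_0)=g(Y_t)$. Because $\tau$ commutes with the maps to $\Delta$, the reduced curve $J=\tau_*X_0$ is supported on $Y_0$ and sits inside it as a closed subscheme, and the exact sequence $0\to\I_{J/Y_0}\to\CO_{Y_0}\to\CO_J\to 0$ gives $p_a(J)-p_a(Y_0)=\chi(\I_{J/Y_0})$. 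Thus the assertion is equivalent to $\chi(\I_{J/Y_0})\ge 0$, and it is this nonnegativity that I would aim to establish.

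To get a grip on both sides I would first resolve $\tau$. As $X_0$ is smooth along the finite indeterminacy set $\Sigma$, so is $X$, and a sequence of blow-ups $\pi\colon\widetilde X\to X$ centred over $\Sigma$ turns $\tau$ into a birational morphism $g=\tau\circ\pi\colon\widetilde X\to Y$. The strict transform $\widehat X_0$ of $X_0$ is isomorphic to $X_0$ (each centre being a smooth point of the successive strict transform), so $p_a(\widehat X_0)=p_a(X_0)=g(X_t)=g(Y_t)$ by flatness of $X/\Delta$ and the isomorphism $X_t\cong Y_t$, and $J=g_*\widehat X_0$. Split $\widehat X_0$ into the part $\widehat X_0^{\mathrm{nc}}$ not contracted by $g$ and the union $\bigcup_j Z_j$ of contracted connected curves. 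The induced map $a\colon\widehat X_0^{\mathrm{nc}}\to J$ is finite and birational onto the reduced curve $J$, so $\CO_J\hookrightarrow a_*\CO_{\widehat X_0^{\mathrm{nc}}}$ gives $p_a(J)\ge p_a(\widehat X_0^{\mathrm{nc}})$; the nodes that $a$ creates by collapsing the points where the $Z_j$ were attached compensate exactly for the connections lost in passing from $\widehat X_0$ to $\widehat X_0^{\mathrm{nc}}$, and keeping track of both effects yields
\[
p_a(J)\ \ge\ p_a(\widehat X_0)-\sum_j p_a(Z_j)\ =\ g(Y_t)-\sum_j p_a(Z_j).
\]
Everything therefore reduces to showing that each contracted curve $Z_j$ is rational.

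This is the heart of the matter, and where I expect the difficulty to lie. The hypothesis $\tau_*I\ne 0$ for components $I$ of $X_0$ meeting $\Sigma$ guarantees that the $Z_j$ are inessential vertical curves rather than the distinguished component(s), but it does not by itself forbid contracting a vertical curve of positive genus. To exclude that I would use flatness and reducedness together: $Y_0=f_Y^{*}(0)$ is a Cartier divisor on the surface $Y$, hence (when $Y$ is Cohen–Macaulay, e.g.\ normal as in the application) has no embedded points, while reducedness of $J=\tau_*X_0$ forces $Y_0$ to have multiplicity one along each one-dimensional component; consequently $Y_0$ is reduced, and the images of the exceptional divisors of $\pi$ appear in $(Y_0)_{\mathrm{red}}=Y_0$ as rational trees meeting $J$ in single points (again because $X_0$ is smooth along $\Sigma$), contributing nothing to the arithmetic genus. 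Since $p_a(Y_0)=g(Y_t)$, contracting a $Z_j$ with $p_a(Z_j)>0$ would strand that genus at a point of the reduced fibre $Y_0$ and force $p_a(Y_0)<g(Y_t)$, a contradiction. Making this Cohen–Macaulay/flatness bookkeeping rigorous — in particular controlling the scheme structure of $Y_0$ and the attaching behaviour of the exceptional curves when $Y$ is not normal — is the step I anticipate being the main obstacle, and it is precisely here that the properness and flatness of $Y/\Delta$ and the reducedness of $J$ all enter.
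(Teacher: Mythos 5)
Your opening reduction and your use of the rational trees over $\Sigma$ are on the right track, but the step you reduce the whole proof to---that every connected curve $Z_j\subset X_0$ contracted by $\tau$ is rational---is false under the stated hypotheses, so the argument cannot be completed along this route. The hypothesis $\tau_*I\ne 0$ only constrains components of $X_0$ meeting $\Sigma$; a component disjoint from $\Sigma$ may be contracted no matter what its genus is. Concretely, let $X\to\Delta$ be a smooth family with reduced central fibre $X_0=A\cup B$, where $A$ has genus $2$, $B$ is elliptic with $B^2=-1$, and $A\cap B$ is one transverse point; let $\tau\colon X\to Y$ be the Grauert contraction of $B$. Then $\Sigma=\emptyset$, $J=\tau_*X_0=\tau(A)$ is reduced, every hypothesis of the lemma holds, and $Z_1=B$ has $p_a(Z_1)=1$. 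The lemma is nevertheless true in this example: $Y$ is normal, so $Y_0$ is $S_1$ and generically reduced, hence $Y_0=J$ as schemes and $p_a(J)=p_a(Y_0)=g(Y_t)=3$; the point is that $\tau(A)$ acquires a singularity at the image of $B$ whose $\delta$-invariant absorbs the genus of $B$. Your inequality $p_a(J)\ge p_a(\widehat X_0)-\sum_j p_a(Z_j)$ discards exactly this contribution, and your closing argument---that contracting a positive-genus $Z_j$ would ``strand'' genus and force $p_a(Y_0)<g(Y_t)$---has the mechanism backwards: flatness forces $p_a(Y_0)=g(Y_t)$ on the nose, and the genus of $Z_j$ is not lost but reappears in the $\delta$-invariant of $J$ at the point $\tau(Z_j)$.

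The repair is to argue entirely on the $Y$ side, which is what the paper does: normalize $X$ and $Y$ (this only decreases $p_a(J)$), and after a base change and resolution obtain $\widehat Y\to Y$ whose central fibre consists of the proper transform $\widehat J$ together with disjoint trees of smooth rational curves---the images of the exceptional loci over $\Sigma$, which are rational trees meeting $\widehat J$ transversely in one point each precisely because $X_0$ is smooth along $\Sigma$. Then $p_a(J)\ge p_a(\widehat J)=p_a(\widehat Y_0)=g(Y_t)$, and the components of $X_0$ contracted by $\tau$ never need to be examined individually: flatness of $\widehat Y/\Delta$ accounts for their genus automatically.
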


\begin{proof}
WLOG, we may assume that both $X$ and $Y$ are normal.

Let $E\subset Y_0$ be the exceptional locus of $\tau^{-1}$ over $\Sigma$. More
precisely, $E = \pi_Y(\pi_X^{-1}(\Sigma)\cap G_\tau)$ with $G_\tau\subset X\times
Y$ the graph of $\tau$ and $\pi_X$ and $\pi_Y$ the projections of
$X\times Y$ to $X$ and $Y$. It is not hard to see by Zariski's Main Theorem
that $\tau: X\backslash \Sigma\to Y\backslash E$ is proper.

After a base change, we can find
$\widehat{X}$ and $\widehat{Y}$ with the diagram
\begin{equation}\label{E559}
\begin{tikzcd}
\widehat{X} \arrow{d}[left]{f} \arrow{r}{\widehat{\tau}} & \widehat{Y}\arrow{d}[right]{g}\\
X \arrow[dashed]{r}{\tau} & Y
\end{tikzcd}
\end{equation}
and the properties that
\begin{itemize}
\item
$f, g$ and $\widehat{\tau}$ are birational and proper maps inducing isomorphisms
$X\backslash f^{-1}(\Sigma)\isom X\backslash \Sigma$ and
$Y\backslash g^{-1}(E)\isom Y\backslash E$
and
\item
$f^{-1}(U_\Sigma)\cap \widehat{X}_0$ and
$g^{-1}(V_E)\cap \widehat{Y}_0$ have simple normal crossing
for analytic open neighborhoods $U_\Sigma\subset X$ and
$V_E\subset Y$ of $\Sigma$ and $E$, respectively.
\end{itemize}
Since $X_0$ is smooth at each $x\in \Sigma$, $f^{-1}(x)$ must be a tree of smooth rational
curves meeting $\widehat{I}$
transversely at one point, where $\widehat{I}$ is the proper transform of
the component $I$ containing $x$ under $f$.
Consequently,
$g^{-1}(E) = \widehat{\tau}(f^{-1}(\Sigma))$ is a disjoint union of trees of
smooth rational curves, each meeting $\widehat{J}$ transversely at one point,
where $\widehat{J}$ is the proper transform of $J$ under $g$.
Therefore, $p_a(J) \ge p_a(\widehat{J}) = p_a(\widehat{Y_0}) = g(Y_t)$.
\end{proof}

This proves \eqref{E560}.
Let us consider the Hilbert polynomial
\begin{equation}\label{E556}
\chi(\CO_J(n)) = h^0(\CO_J(n)) - h^1(\CO_J(n)) = \delta n - p_a(J) + 1
\end{equation}
of $J\subset \PP^4$, where $\delta$ is the degree of $J$ in $\PP^4$.

Let $R = \Bs(\QQ_p)$. If $\dim R = 1$, then
\begin{equation}\label{E557}
h^1(\CO_J(1))\le h^1(\CO_R(1)) = 1
\end{equation}
and hence
\begin{equation}\label{E558}
5 \le h^0(\CO_J(1)) = \delta -p_a(J) + 1 + h^1(\CO_J(1)) \le \delta - 3 
\end{equation}
by \eqref{E560}; it follows that $\delta \ge 8$.

Suppose that $\dim R = 2$. We have proved that $R$ is a cone over a smooth
rational cubic curve. Let $\nu: \BF_3 \isom \widehat{R} \to R$ be the
blowup of $R$ at the vertex of the cone. Then
\begin{equation}\label{E562}
\begin{split}
\chi(\CO_J(n)) &= h^0(\CO_J(n)) = h^0(\CO_R(n)) - h^0(\CO_R(n)\otimes \I_J)\\
&=h^0(\CO_{\widehat{R}}(n)) - h^0(\CO_{\widehat{R}}(n)\otimes 
\CO_{\widehat{R}}(-\widehat{J}))\\
&= \delta n - \left(\left\lceil \frac{\delta}3 \right \rceil - 1\right)
\left(\delta - \frac{3}2 \left\lceil \frac{\delta}3 \right\rceil - 1\right)+1
\end{split}
\end{equation}
for $n>>1$ and hence
\begin{equation}\label{E563}
p_a(J) =
\left(\left\lceil \frac{\delta}3 \right \rceil - 1\right)
\left(\delta - \frac{3}2 \left\lceil \frac{\delta}3 \right\rceil - 1\right),
\end{equation}
where $\I_J$ is the ideal sheaf of $J$ in $R$,
$\CO_{\widehat{R}}(1) = \nu^*\CO_R(1)$ is the pullback of the hyperplane bundle
and $\widehat{J}\subset \widehat{R}$ is the proper transform of $J$.
Therefore, we must have $\delta \ge 8$ by \eqref{E560} again.

Therefore, $\deg J = 8$ when $\tau$ maps $B_p$ birationally onto $J$.
We also proved in the above argument that $p_a(J) = 5$.

In conclusion, $\tau$ maps $B_p$ either birationally onto a curve of degree
$8$ and arithmetic genus $5$ or $2$-to-$1$ onto a rational normal curve
of degree $4$. 
This implies that $\tau(B_p)$ is the only component of dimension $1$ in $Y_p$ and hence $\tau$ is regular along $X_p$.
Therefore, $\tau$ is regular everywhere.

When $\tau$ maps $B_p$ birationally onto a curve $J$. Since $\deg J = 8$ and
$p_a(J) = 5$, we necessarily have $\widehat{Y}_p \isom J = Y_p$. When
$\tau$ maps $B_p$ $2$-to-$1$ onto a rational normal curve $J$,
since $\tau: X\to Y$ factors through $\widehat{Y}$,
$\widehat{Y}_p$ is birational to $B_p$ and $\nu$ maps $\widehat{Y}_p$ $2$-to-$1$
onto $J$ by Zariski's Main Theorem.
Therefore, $\widehat{Y}_p$ is integral for all $p$.

Since $X_p$ is smooth at $\Lambda\cap X_p$, $X_p\isom \widehat{Y}_p$
for $p\in C$ general and $\widehat{Y}_p$ is reduced for all $p\in C$,
we conclude that $\nu^{-1}\circ \tau$ is a local isomorphism along $\Lambda$.

This finishes the proof of Theorem \ref{THM3QUADRICS}.

\subsection{Normality of $Y$}

So far we have only made use of the fact that there exists $\Lambda\in |\LL|$ which
is a multi-section of $f$ and meets each fiber at the smooth points of
a unique component of that fiber. We can certainly say more about $Y$
when $\Lambda = 8\Gamma$ with $\Gamma$ a section of $f$.

\begin{thm}\label{THMNORMAL}
In addition to the hypotheses of Theorem \ref{THM3QUADRICS}, we further assume that
$\Lambda = 8\Gamma$ with $\Gamma$ a section of $f$. Then
$Y$ is normal.
\end{thm}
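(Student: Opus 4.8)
The plan is to verify Serre's criterion $R_1+S_2$, which for the integral surface $Y$ is equivalent to showing that the finite birational normalization $\nu\colon\widehat Y\to Y$ is an isomorphism. Since $\tau$ is regular (\thmref{THM3QUADRICS}) and $X$ is smooth, $\tau$ factors as $X\xrightarrow{\sigma}\widehat Y\xrightarrow{\nu}Y$ with $\sigma$ proper birational onto the normal surface $\widehat Y$; by Zariski's Main Theorem $\sigma_*\CO_X=\CO_{\widehat Y}$, so $\tau_*\CO_X=\nu_*\CO_{\widehat Y}$ and $Y$ is normal if and only if $\nu$ is an isomorphism. As $Y$ is integral and dominates $C$, the projection $Y\to C$ is flat; the general fibre $Y_p\isom X_p$ is a smooth canonical curve, so $Y$ is smooth along it and every failure of normality is confined to the finitely many special fibres.

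By \thmref{THM3QUADRICS} each fibre falls into one of two cases. In case (A), $\tau$ maps $B_p$ birationally onto a curve of degree $8$ and arithmetic genus $5$, and $Y_p\isom\widehat Y_p$ is integral; then $\nu$ restricts to an isomorphism of reduced fibres, and since $\widehat Y$ and $Y$ are both flat over $C$, Nakayama's lemma (applied to $\nu_*\CO_{\widehat Y}/\CO_Y$) shows $\nu$ is an isomorphism near that fibre. In case (B), $\tau$ maps $B_p$ two-to-one onto a rational normal curve $R$; flatness forces $\deg Y_p=8=2\deg R$, so $Y_p=2R$ is non-reduced, while $\widehat Y_p$ is reduced, so $\nu$ is two-to-one over the generic point of $R$ and $Y$ is non-normal along $R$. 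Hence $Y$ is normal if and only if case (B) occurs for no $p$, and the whole theorem reduces to excluding (B).

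This is where $\Lambda=8\Gamma$ enters decisively. Because $\Gamma$ is a section meeting $X_p$ transversally at the single smooth point $x_p$, the restriction $\LL|_{B_p}=\CO_{B_p}(8x_p)$ is supported at one point. Moreover, modulo vertical divisors one has $8\Gamma\equiv\omega_f$ — the two line bundles agree on the generic fibre by \eqref{E103}, and $\Gamma$ furnishes a section — so on every \emph{irreducible} fibre $\LL|_{X_p}\isom\omega_{X_p}$ and $\tau|_{X_p}$ is the honest canonical map of the Gorenstein curve $X_p$, which is two-to-one precisely when $X_p$ is hyperelliptic. Thus case (B) on an irreducible fibre means $X_p$ is hyperelliptic of arithmetic genus $5$; writing $\phi\colon B_p\to\PP^1$ for the underlying degree-$2$ map, the single-point structure yields the strong compatibility $(\phi^*\CO_{\PP^1}(1))^{\otimes 4}\isom\CO_{B_p}(8x_p)$, tying the position of $x_p$ to the hyperelliptic pencil.

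The main obstacle is precisely the exclusion of these hyperelliptic degenerations, since the hypotheses only control the general fibre. I would work with the relative net of quadrics $\QQ$: by \lemref{LEM729000} together with \eqref{E545}, $\dim\Bs(\QQ_p)=2$ forces every member of $\QQ_p$ to be a cone with a common vertex over a rational normal cubic, and I expect to contradict this by feeding the compatibility above into the local isomorphism of $\sigma=\nu^{-1}\circ\tau$ along $\Gamma$ from \thmref{THM3QUADRICS}, which constrains the double cover $\widehat Y_p\to R$ at the image of the section so tightly as to be incompatible with the cone degeneration. Reducible fibres are handled identically after replacing $X_p$ by its distinguished component $B_p$, the remaining components being contracted by $\tau$. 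Once (B) is excluded, every $Y_p\isom\widehat Y_p$ is integral, so $Y=Z$ is a local complete intersection and hence $S_2$; its singular locus meets no general fibre and contains no special fibre, hence is finite, giving $R_1$, and Serre's criterion finishes the proof.
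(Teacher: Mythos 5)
Your reduction is sound and matches the paper's: by \thmref{THM3QUADRICS}, normality of $Y$ is exactly the statement that no fibre falls into your case (B), where $\tau$ maps $B_p$ two-to-one onto a rational normal curve and $\nu\colon\widehat Y\to Y$ fails to be an isomorphism along $\mathrm{supp}(Y_p)$. But the heart of the theorem is the exclusion of case (B), and there your argument stops at a declaration of intent (``I expect to contradict this by feeding the compatibility above into the local isomorphism\dots''). No contradiction is actually derived, and the ingredients you propose to combine --- the relation $(\phi^*\CO_{\PP^1}(1))^{\otimes 4}\isom\CO_{B_p}(8x_p)$ and \lemref{LEM729000} --- do not visibly assemble into one: a hyperelliptic genus-$5$ curve with $K=\CO(8x)$ for a single point $x$ is not a priori impossible (the paper itself remarks that the hyperelliptic locus sits inside the locus $\U\subset\m_5$ of such curves), so no purely fibrewise moduli argument can work. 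The paper's actual mechanism is a concrete local computation \emph{at the point $q=\Gamma_Y\cap Y_p$}: using that $\widehat Y_p$ is smooth at $\widehat q$, that $\nu^{-1}(q)=\{\widehat q\}$ forces $\nu$ to be ramified there, and that $\nu$ is two-to-one onto the smooth curve $\mathrm{supp}(Y_p)$, one shows $Y$ is locally $\{y^2=t^{2m}x\}$ with $\Lambda$ cut out by $y=\lambda(x,t)$, $\lambda(x,0)=x^4$; the global identity $\Lambda=8\Gamma_Y$ of Cartier divisors on $Y$ then demands $\lambda(x,t)^2-t^{2m}x=(x+g(x,t))^8$ with $g(x,0)\equiv 0$, which is impossible (the term $t^{2m}x$, linear in $x$, cannot appear in an eighth power of a function vanishing to order one in $x$ at $t=0$). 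This explicit power-series obstruction is the missing step, and nothing in your sketch substitutes for it.

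Two smaller inaccuracies in your wrap-up: excluding case (B) does not give $Y=Z$, since $Z$ may still contain cubic-cone components $S_p$ supported in fibres (see \eqref{E572} and \thmref{THMDELTAP}) even when $Y$ is normal; and the claim $\LL|_{X_p}\isom\omega_{X_p}$ on every irreducible fibre, while plausible, is not needed and is not established by \eqref{E103}, which is a generic-fibre statement. Your Nakayama argument that a finite birational map inducing isomorphisms on all fibres of two $C$-flat families is an isomorphism is fine, and is implicitly how the paper also concludes once (B) is ruled out.
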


By Theorem \ref{THM3QUADRICS},
$Y$ being normal is equivalent to saying that $\tau$ maps
$B_p$ birationally onto $Y_p$ for every $p\in C$. Namely, it excludes the cases
that $\tau$ maps $B_p$ $2$-to-$1$ onto a rational normal curve in $\PP^4$.
The implication of this statement is somewhat surprising: the family $X/C$ does not have any
hyperelliptic fibers. One can compare this with Sun's theorem which says that
the general fibers of $X/C$ are not hyperelliptic.
If we let $\m_5$ be the moduli space of curves of genus $5$ and $\U$ be the subvariety
of $\m_5$ consisting of curves $F$ with the property that $K_F = \CO_F(8q)$ for some
$q\in F$, then $\U$ has codimension $3$ in $\m_5$ and contains
the hyperelliptic locus $\U_e$ as an ``isolated'' component
in the sense that $\U_e$ is disjoint from the other components
of $\U$.

We abuse the notation a little by using $\Lambda$ to also denote
the member of $|\CO_W(1)|$ corresponding to $8\Gamma\in |\LL|$.
On each fiber $W_p$, $\Lambda$ is a hyperplane
meeting $Y_p$ properly at a unique point with multiplicity $8$
by Theorem \ref{THM3QUADRICS}. Likewise, $\nu^{-1}(\Lambda)$ meets each fiber
$\widehat{Y}_p$ at a unique point. This results in two sections $\Gamma_Y$
and $\widehat{\Gamma}_Y$ of $Y/C$ and $\widehat{Y}/C$, respectively, with the
properties that 
\begin{equation}\label{E599}
\widehat{\Gamma}_Y = \nu^{-1}(\Gamma_Y) \text{ and }
\Gamma = \tau^{-1}(\Gamma_Y).
\end{equation}

Suppose that $\tau$ maps $B_p$ $2$-to-$1$ onto its image, i.e., $\nu$ maps
$\widehat{Y}_p$ $2$-to-$1$ onto its image for some $p\in C$. Let
$\widehat{q} = \widehat{\Gamma}_Y \cap \widehat{Y}_p$
and $q = \Gamma_Y\cap Y_p$.
Also by Theorem \ref{THM3QUADRICS}, $\widehat{Y}_p$ is smooth at $\widehat{q}$.
And since $\nu^{-1}(q) = \{\widehat{q}\}$, $\nu$ is ramified at $\widehat{q}$.

Let $J = \tau_* B_p = \nu_* \widehat{Y}_p$ be the scheme-theoretic image of $B_p$ and $\widehat{Y}_p$ under $\tau$ and $\nu$, respectively. Then $J$ is a closed subscheme of pure dimension $1$ in the cone $R = \Bs(\QQ_p)$ with multiplicity $2$ along a smooth rational curve of degree $4$ on $R$. The Hilbert polynomial of $J\subset \PP^4$ can be computed in the same way as \eqref{E562} with $\delta = \deg J = 8$. Therefore, $p_a(J) = p_a(Y_t) = 5$ for all $t\in C$. In particular, $p_a(J) = p_a(Y_p)$. Hence $Y_p$ 
does not contain any embedded points and $Y_p = J$. This implies that $Y_p$ has pure dimension $1$ and $Y$ is Cohen-Macaulay along $Y_p$. 

In summary, we have the following facts regarding $Y$ and $\widehat{Y}$
locally at $q$ and $\widehat{q}$:
\begin{itemize}
 \item $\nu$ maps $\widehat{Y}_p$ $2$-to-$1$ onto $\supp(Y_p)$, which is a smooth
 curve;
 \item $\Lambda$ is a Cartier divisor on $Y$ such that
$\Lambda = 8\Gamma_Y$ and $\nu^*\Lambda = 8 \widehat{\Gamma}_Y$
with $\Gamma_Y$ and $\widehat{\Gamma}_Y$ sections of $Y/C$ and $\widehat{Y}/C$,
respectively;
\item $Y$ is Cohen-Macaulay at $q$;
\item $\widehat{Y}_p$ is smooth at $\widehat{q} = \widehat{\Gamma}_Y\cap
\widehat{Y}_p$;
\item $\nu$ is ramified at $\widehat{q}$.
\end{itemize}

It is not hard to see from the above that $Y$ is locally given by
\begin{equation}\label{E600}
\{ y^2 = t^{2m}x \} \subset \Delta_{xyt}^3 
\end{equation}
for some $m\in \BZ^+$ at $q$ with $\Lambda$ cut out by
\begin{equation}\label{E601}
y - \lambda(x, t) = 0
\end{equation}
for some $\lambda(x,t) \in \BC[[x,t]]$ satisfying $\lambda(x,0) = x^4$.
And since $\Lambda = 8\Gamma_Y$ on $Y$, we necessarily have
\begin{equation}
(\lambda(x,t))^2 - t^{2m}x = (x+g(x,t))^8
\end{equation}
for some $g(x, t)\in \BC[[x, t]]$ satisfying $g(x, 0) \equiv 0$,
which is impossible.

This shows that $Y$ is normal.

\subsection{Multiplicities of Cubic Cones in $Z$}

We have shown that $Z\subset W$ is the union of $Y$ and cones over smooth rational cubic
curves, which we call {\em cubic cones}, contained in the fibers $W_p$. Therefore,
\begin{equation}\label{E572}
Z = Y + \sum_{p\in C} \delta_p S_p,
\end{equation}
where $S_p$ is a cubic cone contained in $W_p$ and $\delta_p$ is its multiplicity
in $Z$ (let $\delta_p = 0$ and $S_p = \emptyset$ if such $S_p$ does not exist). 
The multiplicity $\delta_p$ has a nice algebraic interpretation.

\begin{defn}\label{DEFDISCR}
Let $E$ be a vector bundle of rank $n$ over a variety $C$ and $W = \PP E^\vee$. 
For $s\in H^0(\CO_W(2)\otimes \pi^* L)$, the {\em discriminant locus} $D_s\subset C$
of $s$ is the vanishing locus of the determinant $\det(s)$ of $s$, 
considered as a section in
\begin{equation}\label{E603}
H^0(\Sym^2 E \otimes \pi^*L)
\subset \Hom(E^\vee, E\otimes \pi^*L), 
\end{equation}
where $\pi$ is the projection $W\to C$ and $L$ is a line bundle on $C$.
Obviously, $D_s\in |2c_1(E) + nL|$.

Locally $D_s$ is very easy to describe. Suppose that $C\cong \Delta^N$ is a polydisk. Then $W\cong \PP^{n-1}\times \Delta^N$ and
\begin{equation}\label{E908}
s = \sum_{ij} f_{ij}(t) w_i w_j
\end{equation}
for some $f_{ij}\in \BC[[t]]$ under the homogeneous coordinates $(w_0, w_1, ..., w_{n-1})$ of $\PP^{n-1}$ and the coordinates $t = (t_1,t_2,...,t_N)$ of $\Delta^N$. Then the discriminant locus of
$s$ is the vanishing locus of $\det\begin{bmatrix}
f_{ij}(t)
\end{bmatrix}$, as a subscheme of $\Delta^N$, which is exactly the locus of $t\in \Delta^N$ where the quadric $s(t) = 0$ is singular.
\end{defn}

Our purpose is twofold: first, we will show that $\delta_p$ is the multiplicity
of the discriminant locus of a general member of $\QQ$ at $p$;
second, we will describe the type
of singularity $Y$ has at the vertex of each cubic cone $S_p$, which depends on
$\delta_p$ as we will see.

\begin{thm}\label{THMDELTAP}
Under the same hypotheses of Theorem \ref{THMNORMAL}, 
\begin{itemize}
\item
$\delta_p$ is determined by
\begin{equation}\label{E604}
\delta_p = \min \{ \mult_p(D_s): s\in H^0(\QQ\otimes \pi^* G), G\in\Pic(C)\}
\end{equation}
for all $p\in C$,
where $D_s\subset C$ is the discriminant locus of $s$ defined as above and
$\mult_p(D_s)$ is the multiplicity of $D_s$ at $p$.
\item
$Y$ is a l.c.i and hence Gorenstein.
\item
If $S_p \ne \emptyset$, $Y$ has a singularity at the vertex of the cone $S_p$
of type $\{y^2 = g(x,t)\}\subset\Delta_{xyt}^3$ with $g(x,t)\in \BC[[x,t]]$ satisfying
\begin{equation}\label{E642}
g(x, 0) = x^n,\ g(0, t) = t^{\delta_p} \text{ and }
\left.\frac{\partial g}{\partial x}\right|_{x = 0} \equiv 0. 
\end{equation}
for some $n\ge 2$, where $t$ is the local parameter of $C$ at $p$.
\item
If $S_p\ne \emptyset$ and $g(Y_p) = 0$, then $n\le 6$ in \eqref{E642}.
\item
For each $p\in C$, $Y_p$ is an integral curve of degree $8$ and arithmetic genus $5$ with the surjective map
\begin{equation}\label{E927}
\begin{tikzcd}
\Sym^2 H^0(\CO_{Y_p}(1)) \ar[two heads]{r} \ar[equal]{d}
& H^0(\CO_{Y_p}(2)) \ar[equal]{d}\\
\BC^{15} \ar[two heads]{r} & \BC^{12}
\end{tikzcd}
\end{equation}
and hence the left exact sequence \eqref{E518} is exact:
\begin{equation}\label{E931}
\begin{tikzcd}
0 \arrow{r} & \QQ \arrow{r} & \Sym^2 f_*\LL \arrow{r} & f_* (\LL^{\otimes 2})
\arrow{r} & 0.
\end{tikzcd}
\end{equation}
\end{itemize}
\end{thm}

The problem is obviously local: we just have to study $W, X, Y, Z$ and $\QQ$ over
an open neighborhood of $p\in C$. Thus, it suffices to prove the
following:

\begin{lem}\label{LEMDELTAP}
Let $W = \PP^4 \times \Delta$, $\QQ$ be a subbundle of $\pi_* \CO_W(2)$
of rank $3$ and $Z = \Bs(\QQ)$, where $\Delta = \{|t| < 1\}$,
$\pi$ is the projection $W\to \Delta$, $\CO_W(1)$ is the pullback
of $\CO_{\PP^4}(1)$ and $\Bs(\QQ)$ is the base locus of $H^0(\QQ)$ as
a subspace of $H^0(\CO_W(2))$. Suppose that
\begin{itemize}
\item $Z_t = \Bs(\QQ_t)$ is a smooth curve for $t\ne 0$;
\item $Z = Y + \delta S$, where $S = \Bs(\QQ_0)$ is a cubic cone, $\delta$
is the multiplicity of $S$ in $Z$ and $Y$ is the irreducible component of $Z$ flat
over $\Delta$;
\item $Y_0$ is not contained in a union of lines on $S$ passing through its vertex.
\end{itemize}
Then
\begin{itemize}
\item $\delta$ is determined by
\begin{equation}\label{E650}
\delta = \min \{ \mult_0(D_s): s\in H^0(\QQ) \}.
\end{equation}
\item
$Y$ is a l.c.i and hence Gorenstein.
\item
$Y$ is locally isomorphic to
the surface $\{y^2 = g(x,t)\}\subset\Delta_{xyt}^3$ for some $g(x,t)\in \BC[[x,t]]$ satisfying
\begin{equation}\label{E644}
g(0, t) = t^{\delta} \text{ and }
\left.\frac{\partial g}{\partial x}\right|_{x = 0} \equiv 0,
\end{equation}
at the vertex $q$ of the cone $S$, where $g(x, 0) \not \equiv 0$ if $Y_0$ is reduced at $q$.
\item If $Y_0$ is integral, it is a curve of degree $8$ and arithmetic genus $5$ in $\PP^4$ with the surjective map
\begin{equation}\label{E932}
\begin{tikzcd}
\Sym^2 H^0(\CO_{Y_0}(1)) \ar[two heads]{r} \ar[equal]{d}
& H^0(\CO_{Y_0}(2)) \ar[equal]{d}\\
\BC^{15} \ar[two heads]{r} & \BC^{12}
\end{tikzcd}
\end{equation}
\end{itemize}
\end{lem}

\begin{proof}
Let $Q_1$ and $Q_2$ be two general members of $H^0(\QQ)$ and let $V = Q_1\cap Q_2$.
It is easy to see that $V_0 = S\cup T$, where $T\isom \PP^2$ is a $2$-plane
meeting $S$ along two distinct lines passing through the vertex $q$ of $S$.
Since $V_t$ is smooth for $t\ne 0$ and $S$ and $T$ meet transversely outside of
$q$, $V$ is a $3$-fold locally given by $xy = t^m$ in
$\Delta_{xyzt}^4$ at a general point of $S\cap T$, where $m\in \BZ^+$ is a constant
on each line in $S\cap T$. As $Q_1$ and $Q_2$ vary, $T$ varies and the
corresponding monodromy action on the two lines in $S\cap T$ is transitive. Therefore,
$m$ remains constant at general points of $S\cap T$.
Thus, $V$ is $\BQ$-factorial outside of
finitely many points on $S\cap T$. More precisely, there exists $\Sigma\subset S\cap T$
such that $\dim \Sigma = 0$ and $\CH^1(V\backslash \Sigma)$ is generated by
$\Lambda$ and $S$, where $\Lambda$ is the hyperplane divisor and
$a \Lambda + b S$ is Cartier on $V\backslash \Sigma$ if and only if
$a\in\BZ$ and $b\in m \BZ$.

Clearly, $Z \sim_\text{rat} 2\Lambda$ and hence
\begin{equation}\label{E652}
Y = 2\Lambda - \delta S
\end{equation}
in $\CH^1(V\backslash \Sigma)$. Note that $Y$ is, a priori,
a Weil divisor on $V$. Restricting $Y$ to $T$, we have
\begin{equation}\label{E653}
Y\Big|_T = 2\Lambda - \delta S\Big|_T = \left(2 - \frac{2\delta}{m}\right) \Lambda
\end{equation}
in $\CH_\BQ^1(T\backslash \Sigma)$. And since $Y_0\subset S$, $Y$ meets $T$
at finitely many points and hence $Y.T = 0$. Indeed, although we do not need it,
it is not hard to see that $\Sigma$ is precisely $Y\cap T$. In any event, we can
conclude that $m = \delta$ by \eqref{E653}.

In addition, $Y$ is Cartier on $V\backslash\Sigma$
by \eqref{E652} since $m | \delta$. Therefore, $Y\backslash \Sigma$ is a l.c.i.
Recall that $Q_1$ and $Q_2$ are two general members of $H^0(\QQ)$ and $\Sigma$
is a set of finitely many points on $S\cap T$. As $Q_1$ and $Q_2$ vary, $T$ varies
and it cuts out on $S$ a linear system of two lines $S\cap T$ with the only base point $q$.
Therefore, we conclude that $Y$ is a l.c.i outside of the vertex $q$.

We let $\mu$ be the number defined by the right hand side of \eqref{E650}. A priori, we do not know that $\mu = \delta$, which is what we are going to prove next.

Let $W$ be parameterized by 
$(w_0, w_1, w_2, w_3, w_4, t)$
with $(w_0, w_1, w_2, w_3, w_4)$ the homogeneous coordinates of $\PP^4$.
Obviously, we may choose the vertex $q$ of $S$ to be $(1,0,0,0,0,0)$.
Furthermore, we claim that for a suitable choice of coordinates,
or equivalently, after applying
a suitable automorphism in $\Aut(W/\Delta)$ to $W$, every
$s\in H^0(\QQ)$ is given by $s= s(w_0, w_1, w_2, w_3, w_4,t)$ satisfying
\begin{equation}\label{E649}
\left.\frac{\partial^{j+1} s}{\partial w_0\partial t^j}\right|_{t=0} \equiv 0
\text{ for } j=0,1,2,...,\mu-1.
\end{equation}
To see this, let us consider $\QQ\otimes \BC[t]/(t^\mu)$. By the definition of
$\mu$, every quadric in $H^0(\QQ) \otimes \BC[t]/(t^\mu)$ has rank $\le 4$
over $\BC[t]/(t^\mu)$. And since a general quadric in $H^0(\QQ_0)$ has a unique
singularity at $q$, we conclude that every quadric in
$H^0(\QQ) \otimes \BC[t]/(t^\mu)$ has a singularity at
$(1, q_1(t), q_2(t), q_3(t), q_4(t))$ for some $q_k(t)\in \BC[t]$ satisfying
$q_k(0) = 0$. Then applying the automorphism in $\Aut(W/\Delta)$ sending
\begin{equation}\label{E651}
\begin{split}
&\quad (w_0, w_1 , w_2, w_3, w_4, t)\\
&\to (w_0, w_1 - q_1(t) w_0, w_2 - q_2(t) w_0, 
w_3 - q_3(t) w_0, w_4 - q_4(t) w_0, t),
\end{split}
\end{equation}
we see that every quadric in
$H^0(\QQ) \otimes \BC[t]/(t^\mu)$ is singular at $(1,0,0,0,0)$ and thus
\eqref{E649} holds. Note that by the definition of $\mu$,
\begin{equation}\label{E915}
\left.\frac{d^{\mu} D_s}{d t^\mu}\right|_{t=0} \ne 0
\end{equation}
for $s\in H^0(\QQ)$ general.

Taking a general $s(w,t)\in H^0(\QQ)$, $s(w,0) = 0$ is a quadric of rank $4$
in $w_1, w_2, w_3, w_4$, where $w = (w_0, w_1, w_2, w_3, w_4)$. After an automorphism of $\PP^4$ preserving $q$, we can make $s(w,0)$ into the quadric
$w_1 w_4 - w_2 w_3$. Furthermore, there exists an automorphism $\lambda$ of $W/\Delta$ such that $\lambda$ preserves $q$ over the ring $\BC[t]/(t^\mu)$ and
\begin{equation}\label{E657}
s(\lambda(w,t)) = s_1(w,t) = w_1 w_4 - w_2 w_3 + t^\mu w_0^2.
\end{equation}
Due to the choice of $\lambda$, the partial derivatives of $s\in H^0(\QQ)$ in \eqref{E649} still vanish.

Applying an automorphism of the cone $\{s_1(w,0) = 0\}$, 
we can move $S$ to the cone in $\PP^4$ over the cubic rational normal curve
$\{(0, 1, x, x^2, x^3)\}$ with vertex at $(1,0,0,0,0)$. It is well known that $H^0(I_S(2))$ is spanned by
$w_1 w_4 - w_2 w_3$, $w_2^2 - w_1 w_3$ and $w_3^2 - w_2 w_4$
for the ideal sheaf $I_S$ of $S$ in $\PP^4$.
That is, $S$ is defined by
\begin{equation}\label{E655}
w_1 w_4 - w_2 w_3 = w_2^2 - w_1 w_3 = w_3^2 - w_2 w_4 = t = 0 
\end{equation}
in $W$. Correspondingly, we can complete $s_1$, given in \eqref{E657}, to a basis
$\{s_1,s_2, s_3\}$ of $H^0(\QQ)$,
as a free module over $\BC[[t]]$, satisfying
\begin{equation}\label{E654}
\begin{aligned}
s_1(w,0) &= w_1 w_4 - w_2 w_3\\
s_2(w,0) &= w_2^2 - w_1 w_3\\
s_3(w,0) &= w_3^2 - w_2 w_4.
\end{aligned}
\end{equation}
Furthermore, by a suitable choice of $s_i$, we may assume that
the expansions of $s_i$ do not contain the monomial terms
$t^k w_2w_3, t^k w_2^2$ and $t^k w_3^2$ for $k\ge 1$ because any of these terms
can be replaced by
\begin{equation}\label{E913}
\begin{aligned}
t^k w_2w_3 &= t^k w_1 w_4 - t^k s_1(w, t) + t^k(s_1(w,t) - s_1(w,0))\\
t^k w_2^2 &= t^k w_1 w_3 + t^k s_2(w,t) - t^k(s_2(w,t) - s_2(w,0))\\
t^k w_3^2 &= t^k w_2 w_4 + t^k s_3(w,t) - t^k(s_3(w,t) - s_3(w,0)).
\end{aligned}
\end{equation}
Then by replacing the basis $(s_1,s_2,s_3)$ by
\begin{equation}\label{E916}
\begin{bmatrix}
1\\
g(t) & A(t)
\end{bmatrix}
\begin{bmatrix}
s_1\\
s_2\\
s_3
\end{bmatrix}
\end{equation}
for some $g(t)\in \BC[[t]]$ and $A(t)\in \text{SL}(2,\BC[[t]])$, we can eliminate these terms.
In summary, we arrive at a basis $\{s_i\}$ of $H^0(\QQ)$ satisfying
\begin{equation}\label{E903}
\begin{aligned}
s_1(w,t) &= w_1 w_4 - w_2 w_3 + t^\mu w_0^2\\
s_2(w,0) &= w_2^2 - w_1 w_3\\
s_3(w,0) &= w_3^2 - w_2 w_4\\
\left.\frac{\partial^{j+1} s_i}{\partial w_0\partial t^j}\right|_{t=0} &\equiv 0
\\
\frac{\partial^3 s_i}{\partial t\partial w_2 \partial w_3} 
&= \frac{\partial^3 s_i}{\partial t\partial w_2^2} 
= \frac{\partial^3 s_i}{\partial t\partial w_3^2}
\equiv 0
\end{aligned}
\end{equation}
for $1\le i \le 3$ and $0\le j\le \mu-1$.

We have the syzygy relations
\begin{equation}\label{E666}
\begin{aligned}
w_2(w_1 w_4 - w_2 w_3) + w_3(w_2^2 - w_1 w_3) + w_1(w_3^2 - w_2 w_4) &= 0\\
w_3(w_1 w_4 - w_2 w_3) + w_4(w_2^2 - w_1 w_3) + w_2(w_3^2 - w_2 w_4) &= 0
\end{aligned}
\end{equation}
in $H^0(I_S(2))$. We claim that there exists $n\le \mu$ such that
\begin{equation}\label{E658}
\begin{aligned}
w_2 s_1 + w_3 s_2 + w_1 s_3 &= t^n f_1 + \sum_{j=1}^{n-1} t^j 
(a_{1j} w_1 s_1 + b_{1j} w_4 s_1 + c_{1j} w_1 s_3)\\
w_3 s_1 + w_4 s_2 + w_2 s_3 &= t^n f_2 + \sum_{j=1}^{n-1} t^j 
(a_{2j} w_1 s_1 + b_{2j} w_4 s_1 + c_{2j} w_4 s_2)
\end{aligned}
\end{equation}
for some $a_{ij}, b_{ij}, c_{ij}\in \BC$ and some $f_i(w,t)\in H^0(\CO_W(3))$ such that at least one of $f_1(w,0)$ and $f_2(w,0)$ does not vanish on $S$.
We achieve this by induction.

Suppose that we have \eqref{E658} for some $n\le \mu$ with no restriction on $f_i(w,t)$. This is obvious for $n=1$.
By comparing two sides of \eqref{E658}, we must have
\begin{equation}\label{E659}
\begin{aligned}
f_1(w,0) &= w_2 h_1(w) + w_3 h_2(w) + w_1 h_3(w)\\
f_2(w,0) &= w_3 h_1(w) + w_4 h_4(w) + w_2 h_5(w),
\end{aligned}
\end{equation}
for some $h_i\in H^0(\CO_P(2))$. Let us first handle the case $n < \mu$. 

When $n < \mu$, by comparing two sides of \eqref{E658} and using \eqref{E903},
we have
\begin{equation}\label{E911}
h_1(w)\equiv 0 \text{ and } 
\frac{\partial h_i}{\partial w_0} \equiv 0
\end{equation}
for all $i$. So
\begin{equation}\label{E912}
\begin{aligned}
f_1(w,0) &= w_3 h_2(w) + w_1 h_3(w)\\
f_2(w,0) &= w_4 h_4(w) + w_2 h_5(w).
\end{aligned}
\end{equation}
If one of $f_i(w,0)$ does not vanish on $S$, we are done. Otherwise, suppose that
$f_i(w,0)\in H^0(I_S(2))$ for $i=1,2$. Obviously, $f_1(w,0)\in H^0(I_S(2))$ if and only if
\begin{equation}\label{E905}
x^2 h_2(w_0, 1, x, x^2, x^3) \equiv - h_3(w_0, 1, x, x^2, x^3).
\end{equation}
Since $h_2$ and $h_3$ do not contain the monomial terms $w_2^2, w_2w_3$ and $w_3^2$ by our choice of $s_i$, we conclude that for every term
$\lambda w_a w_b$ in $h_2$, there is a term $-\lambda w_c w_d$ in $h_3$ such that
$a+b+2 = c+d$. Then
\begin{equation}\label{E914}
\begin{aligned}
h_2 &= \lambda_1 w_1^2 + \lambda_{2} w_1 w_2 + \lambda_{3} w_1 w_3 + \lambda_{4} w_1 w_4 + \lambda_5 w_2w_4\\
h_3 &= -\lambda_1 w_1 w_3 - \lambda_{2} w_1 w_4 - \lambda_{3} w_2 w_4 - \lambda_{4} w_3 w_4 - \lambda_5 w_4^2\\
f_1(w,0) &= -(\lambda_2 w_1 + \lambda_5 w_4) s_1(w,0) + \lambda_{3} w_1 s_3(w,0)
\end{aligned}
\end{equation}
and thus we can rewrite the first identity of \eqref{E658} as
\begin{equation}\label{E919}
w_2 s_1 + w_3 s_2 + w_1 s_3 = t^{n+1} \widehat{f}_1 + \sum_{j=1}^{n} t^j 
(a_{1j} w_1 s_1 + b_{1j} w_4 s_1 + c_{1j} w_1 s_3)
\end{equation}
for some $\widehat{f}_1(w,t)\in H^0(\CO_W(3))$. Applying the same argument to
$f_2$, we obtain
\begin{equation}\label{E661}
w_3 s_1 + w_4 s_2 + w_2 s_3 = t^{n+1} \widehat{f}_2 + \sum_{j=1}^{n} t^j 
(a_{2j} w_1 s_1 + b_{2j} w_4 s_1 + c_{2j} w_4 s_2)
\end{equation}
for some $\widehat{f}_2(w,t)\in H^0(\CO_W(3))$. We can continue this process until either one of $f_i(w,0)$ does not vanish on $S$ or $n=\mu$. Let us handle the case $n=\mu$.

When $m=\mu$, we have $h_1(w) = w_0^2$ and hence \eqref{E659} becomes
\begin{equation}\label{E910}
\begin{aligned}
f_1(w, 0) &= w_2 w_0^2 + w_3 h_2(w) + w_1 h_3(w)\\
f_2(w, 0) &= w_3 w_0^2 + w_4 h_4(w) + w_2 h_5(w).
\end{aligned}
\end{equation}
By examining the two sides of \eqref{E658} closer, we have the relation
\begin{equation}\label{E660}
\frac{\partial h_2}{\partial w_0} \equiv \frac{\partial h_4}{\partial w_0}
\text{ and }
\frac{\partial h_3}{\partial w_0} \equiv \frac{\partial h_5}{\partial w_0}
\end{equation}
among $h_i$. In particular, $h_2(q) = h_4(q)$ and $h_3(q) = h_5(q)$.
Therefore, the Jacobian
\begin{equation}\label{E663}
\begin{bmatrix}
\displaystyle{\frac{\partial f_1}{\partial w_1}} & 
\displaystyle{\frac{\partial f_1}{\partial w_2}} & 
\displaystyle{\frac{\partial f_1}{\partial w_3}} & 
\displaystyle{\frac{\partial f_1}{\partial w_4}}\\
\displaystyle{\frac{\partial f_2}{\partial w_1}} & 
\displaystyle{\frac{\partial f_2}{\partial w_2}} & 
\displaystyle{\frac{\partial f_2}{\partial w_3}} &
\displaystyle{\frac{\partial f_2}{\partial w_4}}
\end{bmatrix}_q
= \begin{bmatrix}
h_3(q) & w_0^2 & h_2(q) & 0\\
0 & h_5(q) & w_0^2  & h_4(q)
\end{bmatrix}
\end{equation}
of $f_i(w,0)$ has rank $2$ at $q$. Therefore, $\{f_1(w,0) = 0\}$ and $\{f_2(w,0) = 0\}$ are two cubic hypersurfaces in $\PP^4$ meeting transversely at $q$.
By the surjection
\begin{equation}\label{E656}
H^0(I_S(2)) \otimes H^0(\CO_P(1)) \twoheadrightarrow H^0(I_S(3)),
\end{equation}
every cubic hypersurface containing $S$ is singular at $q$. 
Therefore, neither $f_i(w,0)$ vanishes on $S$.
Indeed, $f_1(w,0)$ and $f_2(w,0)$ are linearly independent in $H^0(\CO_S(3))$
when $n = \mu$. This proves that there always exists $n\le \mu$ such that 
\eqref{E658} holds for some $f_i\in H^0(\CO_W(3))$ satisfying that at least one of $f_i(w,0)$ does not vanish on $S$.

Next we will show that $n=\delta$.
Let us consider the complete intersections
\begin{equation}\label{E906}
\begin{aligned}
Z_{i,a,b} &= \{s_{i+1} = s_{i+2}\\
&\hspace{18pt} = (aw_2 + bw_3) s_1 + (aw_3 + bw_4) s_2 + (aw_1 + bw_2) s_3 = 0\}\\
\end{aligned}
\end{equation}
in $W$ for $i=1,2,3$ and constants $a$ and $b\in \BC$, where we sets $s_{i+3} = s_i$. Obviously, $Z_{i,a,b}$ and $Z$ agree over an open set of $W$. More precisely,
\begin{equation}\label{E904}
\begin{aligned}
Z_{1,a,b} \cap \{a w_2 + bw_3 \ne 0 \} &= Z \cap \{a w_2 + bw_3 \ne 0 \}\\
Z_{2,a,b} \cap \{a w_3 + bw_4 \ne 0 \} &= Z \cap \{a w_3 + bw_4 \ne 0 \}\\
Z_{3,a,b} \cap \{a w_1 + bw_2 \ne 0 \} &= Z \cap \{a w_1 + bw_2 \ne 0 \}.
\end{aligned}
\end{equation}
For every point $p\ne q\in S\cap Y$, we can always find $i$ such that
$Z_{i,a,b} = Z$ locally at $p$ when $a$ and $b$ are general. Therefore, we have
\begin{equation}\label{E907}
\begin{aligned}
Z &= Z_{i,a,b} =
\{s_{i+1} = s_{i+2} = t^n (a f_1 + b f_2) = 0\}
\\
&=\{s_{i+1} = s_{i+2} = t^n = 0\} \cup \{s_{i+1} = s_{i+2} = a f_1 + b f_2 = 0\}
\end{aligned}
\end{equation}
locally at $p$. The scheme $\{s_{i+1} = s_{i+2} = t^n = 0\}$ is supported along $S$ with multiplicity $n$ and $\{s_{i+1} = s_{i+2} = a f_1 + b f_2 = 0\}$ does not contain $S$ since $a f_1(w,0) + b f_2(w,0)\not\in H^0(I_S(3))$ for general choices of $a$ and $b$. In conclusion, we must have $n = \delta$ and 
\begin{equation}\label{E662}
Y = \{ s_{i+1} = s_{i+2} = a f_1 + b f_2 = 0 \}
\end{equation}
locally at $p$ for $a$ and $b$ general.
It follows that $f_j(w,t)\in H^0(I_Y(3))$ for $j=1,2$.
This also gives a more explicit proof that $Y$ is a l.c.i outside of $q$.

So far we have proved that $n=\delta \le \mu$. 
Next, we claim that $\delta = \mu$.
By \eqref{E662}, we have
\begin{equation}\label{E909}
Y_0\subset S\cap \{f_1(w,0) = f_2(w,0) = 0\}.
\end{equation}
When $n < \mu$, this implies
\begin{equation}\label{E920}
Y_0\subset S\cap \{w_3 h_2 + w_1 h_3 = w_4 h_4 + w_2 h_5 = 0\}.
\end{equation}
by \eqref{E912}. All quadrics $h_i(w) = 0$ are singular at $q$
for $2\le i\le 5$ by \eqref{E911}. 
It follows that the right hand side of \eqref{E920}
is a union of lines on $S$ through $q$, which contradicts our hypotheses on $Y$. Therefore, $n = \mu$.

In conclusion, $n = \mu = \delta$ and $f_i(w,0)$ define two cubic hypersurfaces in $\PP^4$ meeting transversely at $q$.

It remains to study the local behavior of $Y$ at the vertex $q$.
Let us consider the complete intersection
\begin{equation}\label{E664}
\begin{aligned}
Y' &= \{ s_1(w,t) = f_1(w,t) = f_2(w,t) = 0\}
\\
&= \{
w_1 w_4 - w_2 w_3 + t^\delta w_0^2 = f_1(w,t) = f_2(w,t) = 0
\}
\end{aligned}
\end{equation}
in $W$. Obviously, $Y\subset Y'$.

By \eqref{E663}, using implicit function theorem, we see that $Y'$
is locally given by $\{y^2 = g(x,t)\}\subset \Delta_{xyt}^3$ at $q$
for some $g(x,t)\in \BC[[x,t]]$.
The fact that $g(x,t)$ satisfies \eqref{E644} follows from the observation that
\begin{equation}\label{E921}
\frac{\partial^3 f_i}{\partial w_0^3} \equiv 0
\end{equation}
for $i=1,2$, i.e., the expansions of $f_i(w,t)$ do not contain the terms $t^k w_0^3$. One can see this from \eqref{E658}.

It follows that
$Y'$ is a reduced l.c.i at $q$ and is either irreducible or
the union of two irreducible components $y = \pm \sqrt{g(x,t)}$ at $q$.
If it is the latter and $Y\subset Y'$ is one of the two components, then
$Y_0$ is smooth at $q$; on the other hand, $Y_0$ is a curve of degree $8$
on the cubic cone $S$ and thus must be singular at $q$. Therefore,
$Y = Y'$ in an open neighborhood of $q$. So $Y$ is a l.c.i everywhere.

If $g(x,0)\equiv 0$, then $Y_0$ is locally isomorphic to
$\{y^2 = 0\}\subset \Delta_{xy}^2$ and hence nonreduced at $q$.

It is not hard to figure out the curve $Y_0\subset S$.
Note that each cubic hypersurface $\{f_i(w,0) = 0\}$ cuts out a curve of degree $9$ on $S$, while $Y_0$ is a curve of degree $8$. Therefore,
\begin{equation}\label{E933}
S\cap \{f_i(w,0) = 0\} = Y_0 + L_i
\end{equation}
for $i=1,2$ and two lines $L_1$ and $L_2$ on $S$ through $q$. Since $Y = Y'$ locally at $q$, $L_1$ and $L_2$ must be distinct.
For a general line $L$ on $S$ through $q$, $L$ meets each cubic hypersurface $\{f_i(w,0) = 0\}$ transversely at $q$ since $\{f_i(w,0) = 0\}$ are smooth at $q$; consequently, $L$ meets $Y_0$ at two points transversely or one point with multiplicity $2$
outside of $q$. This tells us the class of the curve $Y_0$.
Let $\nu: \widehat{S} \cong \BF_3\to S$ be the resolution of singularity of $S$ and $\Gamma$ be the proper transform of $Y_0$ under $\nu$. Then $\nu^* \CO_S(1)$ has degree $8$ on $\Gamma$ and $\Gamma$ meets a fiber of $\BF_3 \to \PP^1$ with intersection number $2$. That is,
\begin{equation}\label{E934}
\Gamma . C = 8 \text{ and } \Gamma . F = 2
\end{equation}
where $C$ and $F$ are effective generators of $\Pic (\BF_3)$ with
$C^2 = 3$, $C.F=1$ and $F^2 = 0$. Obviously, \eqref{E934} determines the class of $\Gamma$, which is $\Gamma\in |2C + 2F|$.

When $Y_0$ is integral, the surjection \eqref{E932} follows from the diagram
\begin{equation}\label{E935}
\begin{tikzcd}
\Sym^2 H^0(\CO_{Y_0}(1)) \ar[two heads]{r} \ar[equal]{d}
& H^0(\CO_{Y_0}(2)) \ar[equal]{d}\\
\Sym^2 H^0(\CO_\Gamma(C)) \ar[two heads]{r} \ar[equal]{d}
& H^0(\CO_{\Gamma}(2C)) \ar[equal]{d}
\\
\Sym^2 H^0(\BF_3, C) \ar[two heads]{r} \ar[equal]{d}
& H^0(\BF_3, 2C) \ar[equal]{d}
\\
\BC^{15} \ar[two heads]{r} & \BC^{12}
\end{tikzcd}
\end{equation}
\end{proof}

Most of Theorem \ref{THMDELTAP} follows directly from Lemma \ref{LEMDELTAP}. Only the second last statement needs an additional argument. That is, we need to justify that 
$n\le 6$ in \eqref{E642} if $g(Y_p) = 0$. It follows from the lemma below.

\begin{lem}\label{LEMRCCC}
Let $C$ be a rational curve of $\deg C = 8$ and $p_a(C) = 5$ lying on a cubic cone in $\PP^4$. If 
there exists a hyperplane $\Lambda$ of $\PP^4$ meeting $C$ at a unique smooth point and $C$ has a double singularity $\{y^2 = x^n\}\subset \Delta_{xy}^2$ at the vertex $q$ of the cone, then $n\le 6$.
\end{lem}

\begin{proof}
Let $\nu: \widehat{C} \cong \PP^1\to \PP^4$ be the normalization of $C$. We can make everything very explicit. Suppose that the cubic cone is given by 
\begin{equation}\label{E940}
w_1 w_4 - w_2 w_3 = w_2^2 - w_1 w_3 = w_3^2 - w_2 w_4
\end{equation}
as usual with $q = (1,0,0,0,0)$ and $(w_i)$ the homogeneous coordinates of $\PP^4$.

Let $t$ be the affine coordinate of $\widehat{C}$. We identify $H^0(\CO_{\widehat{C}}(d))$ with the space of polynomials in $t$ of degree $\le d$.

Obviously, $\Lambda$ does not pass through $q$. Using $\Aut(\PP^4)$, we may assume that $\Lambda = \{w_0=0\}$. By our hypotheses, $\nu^{-1}(\Lambda)$ consists of a single point, which we take to 
be $\infty$. So $\nu^*(\Lambda) = \nu^* (w_0) = 1 \in H^0(\CO_{\widehat{C}}(8))$.

If $n$ is odd, $\nu^{-1}(q)$ consists of a single point, say $0$; if $n$ is even, $\nu^{-1}(q)$ consists of two points, say $0$ and $1$. We let $s = t^2$ if $n$ is odd and $s = t(t-1)$ if $n$ is even.
Since a general hyperplane passing through $q$ meets $C$ at $q$ with multiplicity $2$, $\nu$ is given by
\begin{equation}\label{E942}
\nu(t) = (1, s f_1, s f_2, s f_3, s f_4)
\end{equation}
for some $\{f_i\}$ spanning a base point free linear system in $H^0(\CO_{\widehat{C}}(6))$. And since $C$ lies on the cubic cone 
\eqref{E940}, it is easy to see that
\begin{equation}\label{E924}
\nu(t) = (1, sf^3, sf^2 g, sf g^2, sfg^3)
\end{equation}
for some $\{f,g\}$ spanning a base point free linear system in $H^0(\CO_{\widehat{C}}(2))$. Note that we are free to replace $\{f,g\}$ by a basis of $\Span \{f,g\}$ using $\PP \text{GL}(5)$ actions that preserves the cubic cone. Let us choose $f(t)$ and $g(t)$ such that $\gcd(f,s) = 1$ and $g(0) = 0$.

We base our argument for $n\le 6$ on the following observation: there does not exist a hypersurface $Q$ in $\PP^4$ such that the intersection multiplicity $(Q.C)_q$ at $q$ is an odd integer less than $n$; also if $n$ is even and $Q$ meets one branch of $C$ at $q$ with multiplicity $m < n/2$, then it must meet the other branch with the same multiplicity $m$.
Here we will find a quadric $Q$ such that $(Q.C)_q = 5$ if $n$ is odd and $(Q.C)_q = 7$ if $n$ is even. This will imply $n\le 6$.

Let us consider the hyperplane $\Lambda' = \{w_2 = 0\}$. By our choice of $f$ and $g$, $3\le (\Lambda' . C)_q \le 4$. If $(\Lambda' . C)_q = 3$, we are done; otherwise, $(\Lambda' . C)_q = 4$. 
If $n$ is odd, this implies that $g$ is a multiple of $s$. If $n$ is even, $g = cs$ or $g = ct^2$ for some $c\ne 0$. If it is the latter, $\Lambda'$ meets the two branches of $C$ at $q$ with multiplicities $2$ and $4$, respectively, and hence $n\le 4$. So we conclude that $g$ is a multiple of $s$; otherwise, we are done. Let us assume that $g = s$. So for every $f$ such that $\{f,g\}$ is a basis of $\Span\{f,g\}$, we have $\gcd(f,s) = 1$. Thus,
we may choose $f$ such that $f(\infty) = 0$. That is, we may simply take $f = t - a$ for some $a$ such that $\gcd(f, s) = 1$. More explicitly, $\nu$ is
\begin{equation}\label{E949}
\nu(t) = (1, f^3 s, f^2 s^2, f s^3, s^4),
\end{equation}
for $f = t-a$, where $s=t^2$ and $a\ne 0$ if $n$ is odd and
$s=t(t-1)$ and $a\ne 0,1$ if $n$ is even.

Let $Q$ be the quadric defined by $w_1^2 - a^4 w_0 w_2 = 0$. Then
\begin{equation}\label{E950}
\nu^*(Q) = (f^3 s)^2 - a^4 f^2 s^2 = f^2 s^2(f^4 - a^4).
\end{equation}

When $n$ is odd, $f^4 - a^4 = (t-a)^4 - a^4$ vanishes at $0$ of order $1$. Hence $(Q.C)_q = 5$ and this proves $n\le 6$ for $n$ odd.

When $n$ is even, $\nu^* Q$ vanishes at $0$ of order $3$ and at $1$ of order $2$ or $3$. If $\nu^* Q$ vanishes at $1$ of order $2$, $(Q.C)_q = 5$ and we are done. Otherwise, suppose that $\nu^* Q$ vanishes at $1$ of order $3$, which happens if and only if
$(f(1))^4 - a^4 = (1-a)^4 - a^4 = 0$.

Now we choose another quadric $Q' = \{w_1^2 - a^4 w_0 w_2 + 4a^4 w_0 w_3 = 0\}$. Then
\begin{equation}\label{E943}
\nu^* (Q') = fs^3 \left(\frac{(f^4 - a^4)f}{s} + 4a^4\right)
\end{equation}
where an easy computation shows
\begin{equation}\label{E941}
\frac{(f^4 - a^4)f}{s} = \frac{((t-a)^4 - a^4)(t-a)}{t(t-1)}
= \begin{cases}
- 4a^4 & \text{if } t=0\\
5(1-a)^4 -a^4 & \text{if } t=1.
\end{cases}
\end{equation}
Therefore, $\nu^* (Q')$ vanishes at $0$ of order at least $4$. 
On the other hand, since $a^4 = (1-a)^4\ne 0$, $\nu^*(Q')$ vanishes at $1$ of order $3$. Consequently, $n\le 6$.
\end{proof}

\section{Numerical Computations on the Pseudo Relative Canonical Model}\label{SECNC}

\subsection{Numerical Invariants of $W$ and $Z$}

We have constructed a pseudo relative canonical model $Y/C$ of $X/C$
in the previous section. The local study of $Y$ has made it possible to 
carry out numerical computations on $Y$, as a subvariety of $W$.
We start with the computation of a few simple numerical invariants of $W$ and $Z$.

Note that $\LL = \CO_X(8\Gamma) = K_X\otimes \CO_X(-V - f^*D)$,
\begin{equation}\label{E567}
h^0(\LL) = 1\text{ and } H^0(\LL\otimes \CO_X(f^*D)) = H^0(K_X)
= H^0(\CO_C(D)))
\end{equation}
by \eqref{E529}.
Thus, combining with \eqref{E522}, we see that
\begin{equation}\label{E568}
f_*\LL = \CO_C \oplus A
\end{equation}
where $A$ is a vector bundle of rank $4$ on $C$ satisfying that
\begin{equation}\label{E593}
H^0(A(D)) = H^0(A\otimes \CO_C(D)) = 0.
\end{equation}
The fact that $H^0(A(D)) = 0$ implies that 
\begin{equation}\label{E581}
c_1(A) \le -4d - 4 \chi(\CO_C)
\end{equation}
by Riemann-Roch. 

Clearly, $\CH^\bullet(W)$ is generated by $\pi^* \CH^1(C)$ and $\Lambda$
with the relation
\begin{equation}\label{E554}
\Lambda^5 = \Lambda^4 . \pi^* c_1(f_*\LL) = c_1(A),
\end{equation}
where $\pi$ is the projection $W\to C$. By the Euler sequence
\begin{equation}\label{E917}
\begin{tikzcd}
0 \ar{r} & \CO_W \ar{r} & \pi^*(f_*\LL)^\vee \otimes \CO_W(\Lambda) \ar{r}
& T_{W/C} \ar{r} & 0
\end{tikzcd}
\end{equation}
we obtain the Chern character
\begin{equation}\label{E918}
\begin{aligned}
\ch(T_W) &= \pi^*\ch(T_C) + \exp(\Lambda) (1 + \pi^* \ch(A^\vee))
- 1\\
&= 5 - \pi^* K_C + 5\Lambda - \pi^* c_1(A) \\
&\quad + \sum_{n\ge 2} \left(
\frac{5}{n!} \Lambda^n - \frac{1}{(n-1)!} \Lambda^{n-1} \pi^* c_1(A)
\right)
\end{aligned}
\end{equation}
of the tangent bundle $T_W$ and it follows that
\begin{equation}\label{E602}
\omega_W = -5\Lambda + \pi^* K_C + \pi^* c_1(A).
\end{equation}

The ideal sheaf $I_Z$ of $Z\subset W$ can be resolved by the Koszul complex 
\begin{equation}\label{E584}
0\xrightarrow{} \wedge^3 \pi^* \QQ \xrightarrow{} \wedge^2 \pi^* \QQ
\xrightarrow{} \pi^* \QQ \xrightarrow{} I_Z\otimes \CO_W(2) \xrightarrow{} 0.
\end{equation}
It follows that
\begin{equation}\label{E578}
Z = -c_3(\pi^* \QQ \otimes \CO_W(-2)) = 8\Lambda^3 - 4\Lambda^2 \pi^* c_1(\QQ)
\end{equation}
in $\CH_2(W)$ and
\begin{equation}\label{E576}
\N_Z = (I_Z/I_Z^2)^\vee = (\pi^* \QQ)^\vee \otimes \CO_W(2) \Big|_Z
\end{equation}
where $\N_Z$ is the normal sheaf of $Z$ in $W$.
In general, we use the notation $\N_{F/G}^\vee = I_F/I_F^2$ to denote the conormal sheaf of $F\subset G$ and $\N_{F/G}$ for its dual, namely, the normal sheaf, where $I_F$ is the ideal sheaf of $F$ in $G$. If the context is clear about the ambient space $G$, we simply write $\N_F = \N_{F/G}$.

\subsection{Intersection Number $\Lambda^2 Y$}

Since
\begin{equation}\label{E569}
\Lambda\Big|_Y = 8\Gamma_Y\text{ and } \tau^*\Lambda = 8\Gamma,
\end{equation}
we have
\begin{equation}\label{E571}
\Gamma^2 = \Gamma_Y^2 = \frac{1}8 \Lambda \Gamma_Y = \frac{1}{64} \Lambda^2 Y,
\end{equation}
where the intersection number $\Gamma_Y^2$ is taken in $Y$.

We need to figure out $\Lambda^2 Y$. By \eqref{E572},
\begin{equation}\label{E573}
\Lambda^2 Y = \Lambda^2 Z - \sum_{p\in C}\delta_p \Lambda^2 S_p
= \Lambda^2 Z - 3 \delta
\end{equation}
since $\deg S_p = 3$ in $W_p\isom \PP^4$,
where $\delta = \sum \delta_p$.

Combining \eqref{E554}, \eqref{E578}, \eqref{E571} and \eqref{E573}, 
we obtain
\begin{equation}\label{E579}
\begin{aligned}
64 \Gamma^2 &= 64\Gamma_Y^2 = 8\Lambda \Gamma_Y = \Lambda^2 Y=
8\Lambda^5 - 4\Lambda^4 \pi^* c_1(\QQ) - 3\delta\\
&= 8 c_1(A) - 4 c_1(\QQ) - 3\delta. 
\end{aligned}
\end{equation}

\subsection{Dualizing Bundles $\omega_Y$ and $\omega_Z$}

By \eqref{E602}, \eqref{E576} and adjunction, we have
\begin{equation}\label{E623}
\omega_Z = \omega_W \Big|_Z + c_1(\N_Z) =
(\Lambda + \pi^* K_C + \pi^* c_1(A) - \pi^* c_1(\QQ))\Big|_Z.
\end{equation}
At every smooth point of $Y_p$, $Z$ is locally given by $y t^{\delta_p} = 0$ in
$\Delta_{xyt}^3$ with $Y = \{ y = 0\}$ and $S_p = \{t = 0\}$, by the local analysis carried out in the proof of Lemma \ref{LEMDELTAP}. Therefore,
\begin{equation}\label{E624}
\begin{aligned}
\omega_Y &= \omega_Z\Big|_Y - \sum_{p\in C} \delta_p Y_p\\
&= \pi_Y^*(\underbrace{ K_C + c_1(A) - c_1(\QQ) - \sum \delta_p p}_{D_Y}
) + 8\Gamma_Y
\end{aligned}
\end{equation}
where $\pi_Y$ is the projection $Y\to C$.

\subsection{Adjunction on $(Y, \Gamma_Y)$}

By Theorems \ref{THM3QUADRICS} and \ref{THMNORMAL}, $Y$ is smooth along $\Gamma_Y$.
Therefore, adjunction applies to $(Y,\Gamma_Y)$ and produces
\begin{equation}\label{E625}
\begin{aligned}
c_1(K_C) &= c_1(K_{\Gamma_Y}) = (\omega_Y + \Gamma_Y) \Gamma_Y\\
&= c_1(K_C) + c_1(A)  - c_1(\QQ) - \delta + 9\Gamma_Y^2
\\
&= c_1(K_C) + \frac{17}{8} c_1(A) - \frac{25}{16} c_1(\QQ) - \frac{91}{64}\delta
\\
&\Rightarrow
c_1(\QQ) = \frac{34}{25} c_1(A) - \frac{91}{100}\delta.
\end{aligned}
\end{equation}
Combining \eqref{E579} and \eqref{E625}, we reach the following crucial identity
\begin{equation}\label{E580}
\boxed{\delta = 100\Gamma^2 - 4c_1(A)}.
\end{equation}

So far by computing $\Lambda\Gamma_Y$ in two different ways, we have established the numerical relations among $c_1(A), c_1(\QQ), \delta$ and $\Gamma^2$ in \eqref{E579} and \eqref{E580}. Next we are trying to bound $\delta$ and $c_1(A)$ in terms of the double surface singularities $Y$ has at the vertices of the cones $S_p$. For that purpose, let us first introduce the invariant $b$.

\subsection{Local Invariants $b_{p,q}$ and $h^1(\CO_\E)$}

We have
\begin{equation}\label{E628}
\begin{aligned}
f^* D + 8 \Gamma + V = K_X &= \tau_Y^* \omega_Y + \sum a(E, Y) E
\\
&= f^* D_Y + 8\Gamma + \sum a(E,Y) E 
\end{aligned}
\end{equation}
where $D_Y$ is the divisor on $C$ defined in \eqref{E624}, $E$ runs over
all exceptional divisors of $\tau$ and $a(E,Y)$ is the discrepancy of $E$
with respect to $Y$. 
For each fiber $Y_p$ and every point $q\in Y_p$, we let $b_{p,q}$ be the smallest
non-negative rational number such that the divisor
\begin{equation}\label{E629}
\sum_{\tau(E) = q} a(E,Y) E + b_{p,q} X_p
\end{equation}
is $\BQ$-effective. Clearly, $b_{p,q} = 0$ when $Y$ has at worst canonical
singularities at $q$. More explicitly, $b_{p,q}$ is simply given by
\begin{equation}\label{E630}
b_{p,q} = \max_{\tau(E) = q} \left(0, -\frac{a(E,Y)}{\mult_E(X_p)}\right)
\end{equation}
where $\mult_E(X_p)$ is the multiplicity of the component $E$ in $X_p$. One can define $b_{p,q}$ for any fiberation $Y/C$. It is independent of the choice of the desingularization of $Y$. For a smooth surface $X$ relative minimal over $C$, $a(E,Y) \le 0$ for all $E\subset X_p$ and so we actually have
\begin{equation}\label{E930}
b_{p,q} = \max_{\tau(E) = q} \left(-\frac{a(E,Y)}{\mult_E(X_p)}\right).
\end{equation}
If we let
\begin{equation}\label{E928}
b_p = \max_{q\in Y_p} b_{p,q},
\end{equation}
we can put \eqref{E628} in the form
\begin{equation}\label{E631}
f^* D  + V = f^* \left(D_Y - \sum b_p p\right) + 
\left(\sum a(E,Y) E + \sum b_p X_p\right).
\end{equation}
Obviously, either side of \eqref{E631} is a Zariski decomposition of the same
divisor. By the uniqueness of Zariski decomposition, we conclude that
\begin{equation}\label{E632}
V = \sum a(E,Y) E + \sum b_p X_p.
\end{equation}
And since $\Gamma E = 0$ for all exceptional divisors $E$ of $\tau$, we have
\begin{equation}\label{E633}
\Gamma V = \sum_{p\in C} b_p.
\end{equation}
Note that \eqref{E632} implicitly says that $b_p\in \BN$ for all $p$.
We will only consider the double surface singularities of $Y$ at the vertices of
the cones $S_p$. Let
\begin{equation}\label{E926}
V' = \sum_{\delta_p\ne 0} \left(\sum_{\tau(E) = q} a(E,Y) E + 
\lceil b_{p,q} \rceil X_p\right),
\end{equation}
where $q\in Y_p$ is the vertex of $S_p$.
Clearly, $V' \le V$ and we let
\begin{equation}\label{E929}
b = \sum_{\delta_p\ne 0} \lceil b_{p,q} \rceil
\text{ and } \E = -\sum_{\delta_p\ne 0} \sum_{\tau(E) = q} a(E,Y) E.
\end{equation}
We may give a better bound for $c_1(A)$ than \eqref{E581} in terms of $b$ and $\E$.

Note that
\begin{equation}\label{E947}
c_1(A) = c_1(f_*\LL) = h^0(8\Gamma + f^* N) - 5\deg N - 5\chi(\CO_C)
\end{equation}
for a sufficiently ample divisor $N$ on $C$.
The dimension $h^0(8\Gamma + f^* N)$ can be estimated by
\begin{equation}\label{E936}
\begin{aligned}
&\quad 8 \Gamma + f^* N = K_X + f^*(N-D) - V\\
&\le K_X + f^*(N-D) - V'\\
&= K_X + f^*(N-D) - \sum_{\delta_p\ne 0} \lceil b_{p,q} \rceil X_p
+ \E.
\end{aligned}
\end{equation}
For $M = K_X + f^*(N-D) - \sum \lceil b_{p,q} \rceil X_p$, by the standard exact sequence
\begin{equation}\label{E945}
\begin{tikzcd}
0 \ar{r} & \CO(M)\ar{r} & \CO(M+\E) \ar{r} & \CO_\E(M+\E) \ar{r} & 0
\end{tikzcd}
\end{equation}
we obtain
\begin{equation}\label{E937}
\begin{aligned}
&\quad h^0(8 \Gamma + f^* N) \le h^0(M+\E) \le h^0(M) + h^0(\CO_\E(M+\E))
\\
&= h^0(K_X + f^*(N-D) - \sum \lceil b_{p,q} \rceil X_p) + h^0(\omega_\E)\\
&= h^0(K_X + f^*(N-D) - \sum \lceil b_{p,q} \rceil X_p) + h^1(\CO_\E)
\end{aligned}
\end{equation}
where $\omega_\E$ is the dualizing sheaf of $\E$. Note that
\begin{equation}\label{E922}
\omega_\E = \CO_\E(M + \E) = \CO_\E(K_X + \E) = \CO_\E(\tau_Y^* \omega_Y) = \CO_\E
\end{equation}
and hence $h^0(\CO_\E) = h^1(\CO_\E)$.
It is also worthwhile to point out that $\E$ is the closed subscheme of $X$ defined by
\begin{equation}\label{E938}
\CO_\E = \frac{\CO_X}{\CO_X(-\E)}.
\end{equation}
If $\E = 0$, we take $\CO_\E = 0$ and $h^1(\CO_\E) = 0$.

On the other hand, $h^0(M)$ can be easily estimated by
\begin{equation}\label{E944}
\begin{aligned}
&\quad h^0(K_X + f^*(N-D) - \sum \lceil b_{p,q} \rceil X_p)
\\
&\le h^0(K_X) + 5 (\deg N - \deg D) - 5 \sum \lceil b_{p,q} \rceil
\\
&= \chi(\CO_C) + 5\deg N - 4d - 5 b
\end{aligned}
\end{equation}
which follows from the exact sequence
\begin{equation}\label{E923}
\begin{tikzcd}[column sep = small]
0 \ar{r} & \CO_X(K_X) \ar{r} & \CO_X(K_X + f^* G) \ar{r} &
\displaystyle{\sum_{p\in G}\CO_{X_p}(K_{X_p})} \ar{r} & 0.
\end{tikzcd}
\end{equation}
Combining \eqref{E947}, \eqref{E937} and \eqref{E944}, we obtain
\begin{equation}\label{E980}
c_1(A) \le -4(d+\chi(\CO_C)) - 5b + h^1(\CO_\E).
\end{equation}
The extra term $-(5b - h^1(\CO_\E))$ makes it a better bound than \eqref{E581}.

\subsection{Key Inequality}

We claim that
\begin{equation}\label{E605}
\boxed{
\begin{aligned}
\chi(X) - \chi(F) \chi(C) &= 12 \chi(\CO_X) - K_X^2 + 16 \chi(\CO_C) \\
&\ge \delta - 28b + 4 h^1(\CO_\E) + K_X \E
\end{aligned}
}
\end{equation}
where $F$ is a general fiber of $f$ and $\chi(X), \chi(F)$ and $\chi(C)$ are
the (topological) Euler characteristics of $X, F$ and $C$, respectively. The
left hand side of \eqref{E605} measures, roughly, how far $X$ is from a
smooth fibration; clearly, $\chi(X) = \chi(F)\chi(C)$ if every fiber of $f$
is smooth.

Let us first see how to derive our main theorem from \eqref{E580} and
\eqref{E605}. We observe that
\begin{equation}\label{E505}
\chi(\CO_X) = h^0(K_X) - h^1(\CO_X) + 1 \le h^0(K_X) - h^1(\CO_C) + 1
= d + 2 \chi(\CO_C).
\end{equation}
Combining \eqref{E505} and \eqref{E529}, we obtain
\begin{equation}\label{E606}
\begin{aligned}
&\quad 12 \chi(\CO_X) - K_X^2 + 16 \chi(\CO_C)
\\
&\le 12(d + 2\chi(\CO_C)) - K_X(f^* D + 8\Gamma + V) + 16 \chi(\CO_C)\\
&\le 12(d + 2\chi(\CO_C)) - K_X(f^* D + 8\Gamma + V') + 16 \chi(\CO_C)\\
&= 4d + 8 \Gamma^2 + 56 \chi(\CO_C) - 8b + K_X\E,
\end{aligned}
\end{equation}
where $V'\le V$ is given by \eqref{E926}. Therefore, we conclude
\begin{equation}\label{E607}
4d + 8 \Gamma^2 + 56 \chi(\CO_C) \ge \delta - 20 b + 4h^1(\CO_\E)
\end{equation}
by \eqref{E605}. Thus, replacing $\delta$ in \eqref{E607}
by the right hand side of \eqref{E580}, we obtain
an upper bound for $\Gamma^2$:
\begin{equation}\label{E583}
\Gamma^2 \le \frac{d + c_1(A) + 5b - h^1(\CO_\E)}{23} + \frac{14}{23} \chi(\CO_C).
\end{equation}
Consequently,
\begin{equation}\label{E612}
\Gamma^2 \le -\frac{3}{23} d + \frac{10}{23} \chi(\CO_C)
\end{equation}
by \eqref{E980}.
Combining \eqref{E612} with \eqref{E530}, we have
\begin{equation}\label{E626}
-\frac{d}8 - \frac{17}4 \chi(\CO_C)
\le -\frac{3}{23} d + \frac{10}{23} \chi(\CO_C),
\end{equation}
which fails if $p_g(X) = d+ \chi(\CO_C) > 863 \chi(\CO_C)$.
This proves Theorem \ref{THMGENUS5}, provided that \eqref{E605} holds.
The rest of the paper will be devoted to the proof of \eqref{E605}.
It is organized as follows:
\begin{itemize}
\item
First, we will interpret the left hand side of \eqref{E605} as a moduli invariant of the family $X/C$ and show that it can be computed via local invariants of the singular fibers of $X/C$. 
Much of this was due to S.L. Tan and we are not claiming any originality on our part.
In this way, we will turn \eqref{E605} into a local problem on the double surface singularities of $Y$ at the vertices of the cones $S_p$.
\item
Second, we will try to give an explicit upper bound for the right hand side of \eqref{E605}. Especially, we need to understand $h^1(\CO_\E)$, which is quite mysterious. For one thing, it is not even clear that $h^1(\CO_\E) \le 5b$ in \eqref{E980}. We will prove this and more.
\item
Finally, we will reduce \eqref{E605} to a local inequality on the double surface singularties of $Y$ and prove it in \S \ref{SECDSS}.
\end{itemize}

\subsection{Moduli Invariants}

If $f: X\to C$ is semistable, the left hand side of \eqref{E605} is given by the moduli invariants $c_1(f_*\omega_f)$ and $f_*(\omega_f^2)$
in (cf. \cite[p. 154]{H-M})
\begin{equation}\label{E582}
\chi(X) - \chi(F) \chi(C) = 12 \deg f_*\omega_f - \omega_f^2 
= \eta
\end{equation}
where $\eta$ is the total number of nodes of the singular
fibers of $f$. This follows from Grothendieck-Riemann-Roch and
the exact sequence
\begin{equation}\label{E608}
\begin{tikzcd}
0 \arrow{r} & f^*\Omega_C \arrow{r} & \Omega_X \arrow{r} & \omega_f
\arrow{r} & \omega_f\otimes \CO_\Sigma \arrow{r} & 0
\end{tikzcd}
\end{equation}
where $\Sigma\subset X$ is the subscheme of the nodes of the singular fibers of $f$.

We want to generalize \eqref{E582} to fibrations $f: X\to C$ of curves
with the only hypotheses that both $X$ and $C$ are smooth and projective. Of course,
we can still define $\eta$ to be the LHS of \eqref{E582}. The point here is to
understand how the singular loci of the fibers of $f$ contribute to $\eta$.

As mentioned at the very beginning, this problem had been extensively
studied by S.L. Tan. For our purpose, we just have to deal with the
contribution coming from the resolution of a double surface singularity
of type \eqref{E642}.
So we are going to give a simple treatment to this problem.
Please see \cite{T1} and \cite{T2} for a comprehensive solution.

Basically, we need to figure out
how to amend the exact sequence \eqref{E608} when $X/C$ is not necessarily
semistable. The trouble here is, of course, that $X_p$ might be nonreduced
for some $p$. In case that $X/C$ has reduced fibers, \eqref{E608} continues
to hold with $\Sigma$ being the subscheme supported at the singularities of $X_p$ and
locally defined by the Jacobian ideal at each singular point. More precisely,
if $X$ is locally given by $g(x,y)=t$ at $q\in X$, $\Sigma$ is locally cut out
by $\partial g/\partial x = \partial g/\partial y = 0$, where $t$ is the local
parameter of the base $C$. Thus, a node
of $X_p$ at $q$ (i.e. $g(x,y)=xy$) contributes $1$ to $\eta$,
a cusp of $X_p$ at $q$ (i.e. $g(x,y) = x^2 - y^3$) contributes $2$ to $\eta$ and so on.

Let us first see how to define the map $\Omega_X \to \omega_f$ in general.
For a vector bundle (or just a coherent sheaf) $E$ over $X$,
we always has a map
\begin{equation}\label{E609}
\begin{tikzcd}
E \arrow{r}{\wedge s} & \wedge^2 E
\end{tikzcd}
\end{equation}
after fixing a section $s\in H^0(E)$. The natural map $f^* \Omega_C \to \Omega_X$
gives rise to
a section of $\Omega_X\otimes (f^*\Omega_C)^\vee$ and thus induces a map
\begin{equation}\label{E610}
\begin{tikzcd}
\Omega_X\otimes (f^*\Omega_C)^\vee \arrow{r}\arrow[equal]{d} &
\wedge^2 (\Omega_X\otimes (f^*\Omega_C)^\vee)\arrow[equal]{d}\\
\Omega_X \otimes f^* K_C^{-1} \arrow{r} & \omega_f \otimes f^* K_C^{-1}.
\end{tikzcd}
\end{equation}
Therefore, we have the exact sequence
\begin{equation}\label{E611}
\begin{tikzcd}
0\arrow{r} & \ker(\rho) \arrow[equal]{d}\arrow{r} &
\Omega_X \arrow{r}{\rho} & \omega_f \arrow{r} & \coker(\rho)\arrow[equal]{d}\arrow{r} & 0\\
& L &&& \omega_f\otimes \CO_\Sigma
\end{tikzcd}
\end{equation}
as a generalization of \eqref{E608}, where
an easy local study shows that $L = \ker(\rho)$ is a line bundle on $X$ and
$\Sigma$ is the subscheme of $X$ defined by the Jacobian ideals of $X_p$ as above. More explicitly, if we let
$\Sigma = \Sigma_0 \cup \Sigma_1$ with $\Sigma_0$ of pure dimension $0$ and
$\Sigma_1$ of pure dimension $1$, then
\begin{equation}\label{E613}
\Sigma_1 = \sum (\mult_G(X_p) - 1) G = \sum_{p\in C} \big(X_p - (X_p)_\text{red}\big)
\end{equation}
where $G$ runs over all irreducible components of $X_p$ for all $p$
and $(X_p)_\text{red}$ is the largest reduced subscheme supported on $X_p$.

Taking Chern characters of \eqref{E611}, we obtain
\begin{equation}\label{E614}
\begin{split}
&\quad \ch(\omega_f) - \ch(\Omega_X) = \ch(\CO_{\Sigma_0}) + 
\ch(\omega_f \otimes\CO_{\Sigma_1})
-\ch(L)\\
&=\ch(\CO_{\Sigma_0}) + (\exp(\omega_f) - \exp(\omega_f - \Sigma_1)) - \exp(c_1(L)).
\end{split}
\end{equation}
Restricting \eqref{E614} to $\CH^1(X)$ yields
\begin{equation}\label{E615}
-f^* K_C = \Sigma_1 - c_1(L)
\end{equation}
and hence
\begin{equation}\label{E616}
L \isom f^* \Omega_C \otimes \CO_X\left(\Sigma_1\right).
\end{equation}
Therefore,
\begin{equation}\label{E617}
\begin{split}
&\quad \chi(X) - \chi(F)\chi(C) =
\frac{\omega_f^2}{2} - \frac{K_X^2 - 2c_2(X)}{2}\\
&= \ch(\CO_{\Sigma_0}) + (\omega_f \Sigma_1 - \Sigma_1^2)\\
&= \ch(\CO_{\Sigma_0}) + \sum_{p\in C} K_X \big(X_p - (X_p)_\text{red}\big) 
- \sum_{p\in C} \big(X_p - (X_p)_\text{red}\big)^2\\
&= \ch(\CO_{\Sigma_0}) + \sum_{p\in C} 2\Big(p_a(X_p) - p_a\big((X_p)_\text{red}\big)\Big) 
= \eta
\end{split}
\end{equation}
by restricting \eqref{E614} to $H^4(X)$.

\begin{defn}\label{DEFETA}
Let $G$ be an effective divisor on a smooth surface $X$.
Then the {\em $\eta$-invariant} of $G$ is
\begin{equation}\label{E620}
\begin{split}
\eta(G) 
&= \underbrace{\sum_{q\in G} \eta_0(G, q)}_{\eta_0(G)} + \underbrace{K_X (G - G_\text{red})
	+ (G^2 - G_\text{red}^2)}_{\eta_1(G)} 
\\
&= \underbrace{\sum_{q\in G} \eta_0(G,q)}_{\eta_0(G)} + 
\underbrace{2\big(p_a(G) - p_a(G_\text{red})\big)}_{\eta_1(G)}
\end{split}
\end{equation}
with $\eta_0(G, q)$ defined by 
\begin{equation}\label{E621}
\nu_q = \dim_\BC\left(
\CO_{X,q}/(g_\text{red} \frac{\partial (\log g)}{\partial x}, 
g_\text{red} \frac{\partial (\log g)}{\partial y})\right), 
\end{equation}
where $g(x,y)$ and $g_\text{red}(x,y)$ are the local defining
equations of $G$ and $G_\text{red}$ at $q$, respectively, i.e.,
\begin{equation}\label{E622}
g_\text{red}(x,y) = g_1(x,y) g_2(x,y) ... g_l(x,y)
\end{equation}
if $G$ is given by $g(x,y) = g_1^{m_1}(x,y) g_2^{m_2}(x,y) ... g_l^{m_l}(x,y) = 0$
in $X\isom \Delta_{xy}^2$ at $q$ with $g_1(x,y), g_2(x,y), ..., g_l(x,y)\in \BC[[x,y]]$ distinct and irreducible.
\end{defn}

We can summarize our previous discussion in the following proposition:

\begin{prop}\label{PROPETA}
Let $f: X\to C$ be a surjective morphism from a smooth projective surface $X$ to a
smooth projective curve $C$. Then
\begin{equation}\label{E641}
\chi(X) - \chi(F)\chi(C) = \sum_{p\in C} \eta(X_p)
\end{equation}
where $F$ is a general fiber of $f$.
\end{prop}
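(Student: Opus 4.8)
The plan is to read \propref{PROPETA} as the fiberwise localization of the global identity \eqref{E617}, which has already been extracted from the exact sequence \eqref{E611}. That identity reads
\[
\chi(X) - \chi(F)\chi(C) = \ch(\CO_{\Sigma_0}) + \sum_{p\in C} 2\big(p_a(X_p) - p_a((X_p)_\text{red})\big),
\]
so the entire task reduces to recognizing the two terms on the right as $\sum_p \eta_0(X_p)$ and $\sum_p \eta_1(X_p)$ in the notation of \defnref{DEFETA}. The second term matches the $\eta_1$ contribution by definition, since \eqref{E620} sets $\eta_1(X_p) = 2(p_a(X_p) - p_a((X_p)_\text{red}))$; so the only real content is the identification
\[
\ch(\CO_{\Sigma_0}) = \sum_{p\in C}\eta_0(X_p) = \sum_{p\in C}\sum_{q\in X_p}\nu_q.
\]

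Because $\Sigma_0$ is $0$-dimensional, $\ch(\CO_{\Sigma_0})$ is just its total length $\sum_q \mathrm{length}_q(\Sigma_0)$, and since $\Sigma_0$ is supported at the finitely many singular points of the fibers, it suffices to prove the purely local statement $\mathrm{length}_q(\Sigma_0) = \nu_q$ at each such $q$ and then group points by their fiber. I would work in local coordinates $X\isom\Delta^2_{xy}$ with $f$ given by $t = g(x,y)$ and $g = g_1^{m_1}\cdots g_l^{m_l}$ as in \defnref{DEFETA}. Unwinding \eqref{E611}, the cokernel sheaf $\CO_\Sigma$ is cut out by the Jacobian ideal $(g_x, g_y)$. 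The key algebraic observation, which follows from the Leibniz rule, is the factorization
\[
(g_x, g_y) = h\cdot J, \qquad h = \tfrac{g}{g_\text{red}} = \prod_i g_i^{m_i-1}, \qquad J = \big(g_\text{red}\tfrac{\partial(\log g)}{\partial x},\ g_\text{red}\tfrac{\partial(\log g)}{\partial y}\big),
\]
where $h$ is the local equation of the divisorial part $\Sigma_1$ recorded in \eqref{E613} and $J$ is the logarithmic Jacobian ideal defining $\nu_q$ in \eqref{E621}. As $h$ is a nonzerodivisor, multiplication by $h$ yields the short exact sequence
\[
0 \to \CO/J \to \CO/(hJ) \to \CO/(h) \to 0,
\]
which separates $\CO_\Sigma = \CO/(hJ)$ into the $1$-dimensional piece $\CO/(h) = \CO_{\Sigma_1}$ and the residual $0$-dimensional piece $\CO/J$ of length $\nu_q$. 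Since $\ch(\CO_{\Sigma_0})$ is by construction $\ch(\omega_f\tensor\CO_\Sigma) - \ch(\omega_f\tensor\CO_{\Sigma_1})$ in \eqref{E614}, tracing this splitting through the Grothendieck-group bookkeeping identifies the $0$-dimensional length at $q$ with $\nu_q$; summing over $q$ and then $p$ gives $\ch(\CO_{\Sigma_0}) = \sum_p\eta_0(X_p)$, and combined with the $\eta_1$ term we conclude $\chi(X)-\chi(F)\chi(C) = \sum_p(\eta_0(X_p)+\eta_1(X_p)) = \sum_p\eta(X_p)$.

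I expect the main obstacle to be precisely this separation of the divisorial and $0$-dimensional contributions to $\Sigma$: for non-reduced fibers the Jacobian ideal $(g_x,g_y)$ is \emph{not} $\mathfrak{m}$-primary, so one cannot naively take its colength, and it is essential both to verify the factorization $(g_x,g_y) = hJ$ and to check that $J$ is $\mathfrak{m}$-primary (equivalently $\nu_q<\infty$) away from general points of the components. The latter I would confirm by the model computation $g = u^m$ at a general point of a multiplicity-$m$ component, where $J = (1)$ and hence $\nu_q = 0$, so that $\Sigma_0$ is genuinely supported only at the singularities of the fibers and the sum $\sum_{q\in X_p}\nu_q$ defining $\eta_0(X_p)$ is finite.
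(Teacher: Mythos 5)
Your proposal is correct and follows essentially the same route as the paper: Proposition \ref{PROPETA} is there obtained by summarizing the derivation of \eqref{E617} from the exact sequence \eqref{E611} and matching its terms with Definition \ref{DEFETA}. Your contribution is simply to spell out the ``easy local study'' the paper leaves implicit, namely the factorization $(g_x,g_y)=h\cdot J$ with $h=g/g_{\mathrm{red}}$ and the resulting exact sequence $0\to\CO/J\to\CO/(hJ)\to\CO/(h)\to0$, which correctly identifies $\ch(\CO_{\Sigma_0})$ with $\sum_q\nu_q$ as in \eqref{E621}.
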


It is not hard to see that the $\eta$-invariant of a fiber is always
nonnegative: $\eta_0(G)\ge 0$ obviously and $\eta_1(G)\ge 0$
by the following lemma.

\begin{lem}\label{LEMETA}
Let $G\subset X$ be an effective divisor on a normal surface $X$. Suppose that
$X$ is $\BQ$-Gorenstein, $G$ and $G_\text{red}$ are Cartier,
$G^2 = 0$ and $G$ is nef. The following holds:
\begin{itemize}
\item For a smooth surface $Y$ and a proper birational map $f: Y\to X$,
\begin{equation}\label{E618}
\eta_1(f^* G) \ge \eta_1(G).
\end{equation}
\item $\eta_1(G) \ge 0$ if $X$ is smooth.
\end{itemize}
\end{lem}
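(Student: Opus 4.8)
The plan is to reduce both assertions to Zariski's lemma on a smooth surface, treating the smooth case (the second bullet) first as the analytic core and then bootstrapping to a resolution for the first bullet.

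First I would normalize the problem. Since $p_a(\,\cdot\,)$ and all the relevant intersection numbers are additive over disjoint unions, $\eta_1$ is additive over the connected components of $\supp(G)$, so I may assume $\supp(G)$ is connected. Writing $G=\sum_i m_iG_i$ with the $G_i$ prime, $R=G_\text{red}=\sum_iG_i$ and $E=G-R=\sum_i(m_i-1)G_i$, the genus-addition formula $p_a(A+B)=p_a(A)+p_a(B)+A\cdot B-1$ together with $G^2=0$ rewrites the invariant as
\[
\eta_1(G)=2\bigl(p_a(G)-p_a(R)\bigr)=(K_X+R)\cdot E=\sum_i(m_i-1)\,(K_X+R)\cdot G_i ,
\]
where I have used $G\cdot R=0$ to replace $G^2-R^2$ by $R\cdot E$. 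Thus everything reduces to understanding the numbers $(K_X+R)\cdot G_i=2p_a(G_i)-2+\sum_{j\ne i}G_i\cdot G_j$.

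For the smooth case I would exploit that $G$ nef with $G^2=0$ forces $G\cdot G_i=0$ for every $i$, so each $G_i$ lies in $G^{\perp}$; by the Hodge index theorem (Zariski's lemma) the intersection form is negative semidefinite on the span of the $G_i$, with radical $\BQ G$. If $\supp(G)$ has at least two components, no single $G_i$ is proportional to $G$, hence $G_i^2<0$ for every $i$, and each $G_i$ meets the rest of $G$ (so $\sum_{j\ne i}G_i\cdot G_j\ge 1$). I would then run a descending induction, peeling off one component of multiplicity $\ge 2$ at a time and checking that each step of the telescoping sum $p_a(D)-p_a(D-G_i)=p_a(G_i)-1+(D-G_i)\cdot G_i$ is nonnegative. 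If instead $\supp(G)$ is a single curve $G_0$, then $G=mG_0$ with $G_0^2=0$ and $\eta_1(G)=(m-1)(2p_a(G_0)-2)$.

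I expect the main obstacle to lie precisely in this last configuration: when $G_0\cong\PP^1$ one gets $\eta_1(G)=-2(m-1)<0$, so the bare hypotheses do not suffice and one must exclude $G$ from being a positive multiple of a single rational curve of self-intersection zero. This is exactly what one-connectedness of $G$ rules out, and one-connectedness is automatic for the fibers $X_p$ to which the lemma is applied (connected fibers of positive arithmetic genus), so I would invoke it here. The remaining work is the multi-component inductive step; the subtlety is that once a component is removed the intermediate divisor $D$ no longer satisfies $D\cdot G_i=0$, and I would handle $(D-G_i)\cdot G_i$ by always peeling off a component that still meets the rest of $D$, using connectivity together with $G_i^2<0$ to keep each increment nonnegative.

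For the first bullet I would pull $G$ back. By the projection formula $f^{*}G$ is effective and nef with $(f^{*}G)^2=G^2=0$, so it satisfies the smooth-surface hypotheses on $Y$. Writing $K_Y=f^{*}K_X+\sum_j a_jE_j$ for the discrepancies and $f^{*}G=\widetilde G+\sum_j\beta_jE_j$ with $\beta_j\ge 0$ (here $\widetilde G$ is the proper transform and the $E_j$ are $f$-exceptional), I would expand $\eta_1(f^{*}G)-\eta_1(G)$ and show that the difference collects into exceptional contributions, each nonnegative by the negative-definiteness of the exceptional configuration and the discrepancy inequalities. I expect this bookkeeping — in particular matching $(f^{*}G)_\text{red}$ against $f^{*}(G_\text{red})$ over the exceptional locus — to be the most delicate but conceptually routine part of the argument.
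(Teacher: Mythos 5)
Your proposal contains one genuinely valuable observation and two real gaps. The observation: the lemma as stated is literally false when $G=mG_0$ with $G_0$ a smooth rational curve of self-intersection $0$ (e.g.\ twice a ruling on $\PP^1\times\PP^1$ gives $\eta_1(G)=-2$). This is a correct catch; the paper's own proof slides past it by asserting that a component $E\subset G$ with $K_XE<0$ must be a $(-1)$-curve, which fails exactly when $E^2=0$ and $K_XE=-2$. (The case is harmless in the application, where $G$ is a fiber of arithmetic genus $5$.) The first gap is in your peeling argument for the smooth case. The increment $p_a(D)-p_a(D-G_i)=p_a(G_i)-1+(D-G_i)\cdot G_i$ can be negative even when $G_i$ still meets the rest of $D$: take $G=\wt{N}+2\wt{E}+2E'$ with all three components rational, $\wt{N}^2=-4$, $\wt{E}^2=-2$, $E'^2=-1$, $\wt{N}\cdot\wt{E}=2$, $\wt{E}\cdot E'=1$, $\wt{N}\cdot E'=0$ (blow up an $I_1$ elliptic fiber at its node, then at a point of the exceptional curve); this $G$ is nef with $G^2=0$. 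Peeling $\wt{E}$ first and then $E'$ from $D=\wt{N}+\wt{E}+2E'$ gives increment $0-1+(0+1-1)=-1$, although $E'\cdot\wt{E}=1>0$. A good ordering happens to exist here, but its existence in general is precisely the content of the inequality and is not established by your selection rule.

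The second gap is in the first bullet. For a general normal $\BQ$-Gorenstein $X$ there are no ``discrepancy inequalities'' to invoke, and in fact the discrepancies cancel out of the computation entirely, since $f$-exceptional curves meet pullbacks of Cartier divisors trivially; so your proposed bookkeeping does not isolate the actual difficulty. The paper's route is different and cleaner on both points: it sets $M=f^*G_\text{red}-(f^*G)_\text{red}$, an effective $f$-exceptional divisor supported on $f^*G$, and derives the exact identity $\eta_1(f^*G)-\eta_1(G)=K_YM-M^2$ as in \eqref{E645}. Here $M^2\le 0$ by Zariski's lemma, and $K_YM\ge 0$ is arranged by factoring $f$ through the minimal resolution (where $K$ is relatively nef) followed by blow-ups of smooth surfaces (where $M$ is a multiple of the exceptional $(-1)$-curve); this proves \eqref{E618}. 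The second bullet then follows without any peeling: if $K_X\cdot E\ge 0$ on every component of $G$, then $\eta_1(G)=K_X(G-G_\text{red})-G_\text{red}^2\ge 0$ outright, and otherwise one contracts a $(-1)$-curve and applies the monotonicity \eqref{E618} inductively. To close your argument you would need either a proof that a good peeling order always exists, or an argument of the paper's blow-down type.
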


\begin{proof}
First we show that \eqref{E618} holds when $K_Y E \ge 0$ for all exceptional curves
$E$ of $f$, i.e., $K_Y$ is $f$-nef. Let us consider
\begin{equation}\label{E667}
M = f^* G_\text{red} - (f^* G)_\text{red}.
\end{equation}
Clearly, $f^*G \ge M\ge 0$ and $f_* M = 0$. 
Therefore,
\begin{equation}\label{E645}
\begin{split}
\eta_1(f^*G) - \eta_1(G) &= - 2p_a((f^* G)_\text{red})
+ 2 p_a(G_\text{red})\\
&= (K_X G_\text{red} + G_\text{red}^2)
- (K_Y (f^* G)_\text{red} + (f^* G)_\text{red}^2)\\
&= K_Y M - M^2
\end{split}
\end{equation}
since $p_a(f^*G) = p_a(G)$ and $M . f^* G_\text{red} = 0$, obviously.

By our hypotheses on $G$, $F^2 \le 0$ for all effective divisors $F$ supported on
$f^* G$. Therefore, $M^2 \le 0$. And since $K_Y M \ge 0$, we conclude that
$\eta_1(f^*G) \ge \eta_1(G)$ if $K_Y$ is $f$-nef.

Next, we claim that \eqref{E618} holds when both $X$ and $Y$ are smooth.
It suffices to prove it for $f: Y\to X$ the blowup of $X$ at a point. This is more or
less obvious since $M = m E$ in \eqref{E645}, where $E$ is the exceptional divisor
of $f$. This proves \eqref{E618}.

If $K_X E \ge 0$ for all components $E\subset G$, then
\begin{equation}\label{E646}
\eta_1(G) = K_X(G - G_\text{red}) - G_\text{red}^2 \ge 0
\end{equation}
since $G_\text{red}^2 \le 0$.

If $X$ is smooth and $K_X E < 0$ for some component $E\subset G$, then $E$ is a $(-1)$-curve and
it can be blown down via $\pi: X\to X'$. Then $\eta_1(G)\ge \eta_1(G')$ by \eqref{E618},
where $G' = \pi_* G$. By induction, we conclude that $\eta_1(G) \ge 0$ if $X$ is smooth.
\end{proof}

Neither Proposition \ref{PROPETA} nor Lemma \ref{LEMETA} is new \cite{T3}.

Now back to \eqref{E605}, as a consequence of Proposition \ref{PROPETA},
we see that it holds provided that we can prove
\begin{equation}\label{E619}
\sum_{p\in C} \eta(X_p) \ge \delta - 28 b + 4 h^1(\CO_\E) + K_X\E
\end{equation}
which is a local statement on the singularities of $Y$.

By Theorem \ref{THMDELTAP}, we know that $Y$ has a double point of type \eqref{E642}
at the vertex of each cubic cone $S_p$. And by Lemma \ref{LEMETA}, all the local contributions
to $\eta(X_p)$ is nonnegative. Therefore, we can further reduce \eqref{E619} to
the following:
\begin{equation}\label{E648}
\eta(X_p \cap \tau^{-1}(U_q)) \ge \delta_p - 28 \lceil b_{p,q}\rceil
+ 4h^1(\CO_{\E_p}) + K_X \E_p
\end{equation}
where $U_q$ is an analytic open neighborhood of the vertex $q$ of $S_p$ in $W$, $b_{p,q}$ and $\E$ are defined in \eqref{E630}
and \eqref{E929}, respectively, and $\E_p$ is the connected component of $\E$ contained in $X_p$.

\subsection{Estimate $h^1(\CO_\E)$ and $K_X\E$}

Let us give a more explicit upper bound for the right hand side of \eqref{E648}. This involves estimation of the terms $h^1(\CO_\E)$ and $K_X\E$.

\begin{lem}\label{LEMTORSIONFREER1FOX}
Let $f: X\to C$ be a flat projective morphism from a projective surface $X$ to a smooth projective curve $C$. 
Suppose that
\begin{itemize}
\item
the general fibers of $f$ are smooth,
\item
$X$ is a local complete intersection, and
\item
$X$ is smooth at a point $q$ if the fiber $X_{f(q)}$ is nonreduced at $q$.
\end{itemize}
Then
$R^1 f_* \CO_X$ is torsion-free. Consequently,
all of $h^0(\CO_{f^* M})$, $h^1(\CO_{f^*M})$,
$h^0(\omega_X\otimes \CO_{f^* M})$ and $h^1(\omega_X\otimes \CO_{f^*M})$
are constants depending only on $\deg M$ for all effective divisors $M$ on $C$.
\end{lem}

\begin{proof}
Here we do not have to assume that $f$ has connected fibers.

This is clear if all fibers of $f$ are reduced. If $X_p$ is reduced, $h^0(\CO_{X_p})$ is the number of connected components of $X_p$ so $h^0(\CO_{X_t})$ is constant for $t$ in an open neighborhood of $p$; by flatness and invariance of Hilbert polynomials, $h^1(\CO_{X_t})$ is also locally constant at $p$. So $R^1 f_* \CO_X$ is torsion-free if all fibers of $f$ are reduced.

Let $\phi: \widehat{X}\to X$ be a birational morphism 
consisting of the blowups with smooth centers
over the nonsingular locus of $X$.
Then $\phi_* \CO_{\widehat{X}} = \CO_X$ and $R^j \phi_* \CO_{\widehat{X}} = 0$ for $j \ge 1$. Hence
\begin{equation}\label{E964}
R^1 (f\circ \phi)_* \CO_{\widehat{X}} = R^1 f_*(\phi_* \CO_{\widehat{X}}) = R^1 f_* \CO_X
\end{equation}
by Grothendieck spectral sequence. So we may simply replace $X$ by $\widehat{X}$. In particular, we can make $X$ into a surface such that all fibers of $X/C$ have simple normal crossing supports
outside of $X_\text{sing}$. By our hypothesis on $X$, $X_p$ has simple normal crossing support along its nonreduced components.

By stable reduction, there exists a finite morphism $B\to C$ from a smooth projective curve $B$ to $C$ such that the normalization $W$ of $X\times_C B$ has reduced fibers over $B$. So we have the diagram
\begin{equation}\label{E965}
\begin{tikzcd}
W \ar{r}{\varphi} \ar{d}[left]{e} & X\ar{d}{f}\\
B\ar{r}{\nu} & C
\end{tikzcd}
\end{equation}
Since $\varphi: W\to X$ is finite, $R^j \varphi_* \f = 0$ for all $j\ge 1$ and coherent sheaves $\f$ on $W$. Therefore,
\begin{equation}\label{E966}
R^1 (f\circ \varphi)_* \CO_W = R^1 f_* (\varphi_* \CO_W).
\end{equation}
By our hypotheses, $X$ is Cohen-Macaulay and smooth in codimension one. Therefore, $X$ is normal by Serre's criterion on normality and hence
$\CO_X$ is a direct summand of $\varphi_*\CO_W$. Then by \eqref{E966}, $R^1 f_* \CO_X$ is torsion free if $R^1 (f\circ \varphi)_* \CO_W$ is.

On the other hand, since $\nu$ is finite, we have
\begin{equation}\label{E967}
R^1 (f\circ \varphi)_* \CO_W = R^1 (\nu\circ e)_* \CO_W
= \nu_* (R^1 e_* \CO_W).
\end{equation}
Since $e: W\to B$ has reduced fibers, $R^1 e_* \CO_W$ is torsion-free. It follows that $\nu_* (R^1 e_* \CO_W)$ and hence
$R^1 (f\circ \varphi)_* \CO_W$ are torsion-free. This proves that $R^1 f_* \CO_X$ is torsion-free.

For an effective divisor $M$ on $C$,
applying $f_*$ to the exact sequence
\begin{equation}\label{E963}
\begin{tikzcd}
0 \ar{r} & \CO_X(-f^*M) \ar{r} & \CO_X \ar{r} & \CO_{f^*M} \ar{r} & 0,
\end{tikzcd}
\end{equation}
we obtain
\begin{equation}\label{E968}
\begin{tikzcd}[column sep=12pt]
0 \ar{r} & f_*\CO_X \otimes \CO_C(-M) \ar{r} & f_* \CO_X \ar{r} & f_* \CO_{f^*M}\\
\ar{r} & R^1 f_* \CO_X\otimes \CO_C(-M) \ar{r} & R^1 f_* \CO_X
\ar{r} & R^1 f_* \CO_{f^*M} \ar{r} & 0.
\end{tikzcd}
\end{equation}
Since $R^1 f_* \CO_X \otimes \CO_C(-M)$ is torsion-free and $f_* \CO_{f^* M}$ is torsion,
the above long exact sequence breaks down to two short exact sequences:
\begin{equation}\label{E975}
\begin{tikzcd}[column sep=small]
0 \ar{r} & f_* \CO_X \otimes \CO_C(-M) \ar{r} & 
f_* \CO_X \ar{r} & 
f_* \CO_{f^* M}
 \ar{r} & 0
\end{tikzcd}
\end{equation}
and
\begin{equation}\label{E976}
\begin{tikzcd}[column sep=small]
0 \ar{r} 
& R^1 f_* \CO_X\otimes \CO_C(-M) \ar{r} & R^1 f_* \CO_X \ar{r}
& R^1 f_* \CO_{f^* M } \ar{r} & 0.
\end{tikzcd}
\end{equation}
By \eqref{E975}, we have the identity
\begin{equation}\label{E977}
\begin{aligned}
h^0(\CO_{f^* M }) &= h^0(f_* \CO_{f^* M}) = c_1(f_* \CO_{f^*M})\\
&= c_1(f_*\CO_X) - c_1(f_*\CO_X \otimes \CO_C(-M)),
\end{aligned}
\end{equation}
whose right hand side only depends on $\deg M$. Therefore, $h^0(\CO_{f^* M })$ is a constant only depending on $\deg M$. 
Similarly, by \eqref{E976}, the same holds for $h^1(\CO_{f^*M})$.
Then by Serre duality, both
$h^0(\omega_X\otimes \CO_{f^* M})$ and $h^1(\omega_X\otimes \CO_{f^*M})$
are constants only depending on $\deg M$.
\end{proof}

For two effective divisors $F_1$ and $F_2$ on a smooth variety, we have the exact sequence
\begin{equation}\label{E969}
\begin{tikzcd}[column sep=small]
0 \ar{r} & \CO_{F_1}(-F_2)\ar[equal]{d} \ar{r} & \CO_{F_1+F_2}
\ar[equal]{d} \ar{r} & \CO_{F_2} \ar[equal]{d} \ar{r} & 0\\
& \displaystyle{\frac{\CO(-F_2)}{\CO(-F_1-F_2)}}
& \displaystyle{\frac{\CO}{\CO(-F_1-F_2)}}
& \displaystyle{\frac{\CO}{\CO(-F_2)}}
\end{tikzcd}
\end{equation}
So we have a surjection $H^1(\CO_{F_1 + F_2}) \twoheadrightarrow H^1(\CO_{F_2})$ on a smooth surface. Consequently, for two effective divisors $D_1$ and $D_2$ on a smooth surface with $D_1\le D_2$, we always have
$h^1(\CO_{D_1}) \le h^1(\CO_{D_2})$.

Applying this observation to 
\begin{equation}\label{E955}
\E \le M = \sum_{\delta_p \ne 0} \lceil b_{p,q} \rceil X_p,
\end{equation}
we obtain $h^1(\CO_\E) \le h^1(\CO_M)$. By Lemma \ref{LEMTORSIONFREER1FOX}, $h^1(\CO_M) = b h^1(F) = 5b$ and hence
$h^1(\CO_\E) \le 5b$. We can do better with the following lemma.

\begin{lem}\label{LEMH1FP}
Under the same hypotheses of Lemma \ref{LEMTORSIONFREER1FOX}, we further assume that $f$ has connected fibers. Suppose that
$X_p = \f + \Y$ for a fiber $X_p$ of $f$ over $p\in C$ , where
\begin{itemize}
\item
$\f$ and $\Y$ are two connected effective divisors contained in $X_p$,
\item
$\f$ and $\Y$ meet properly, and
\item
$X$ is smooth along $\f$.
\end{itemize}
Then
\begin{equation}\label{E956}
h^1(\CO_{b\f}) \le b\big( h^1(\CO_{X_p}) - h^1(\CO_\Y) -
\f_r\Y_r + 1
\big) 
\end{equation}
for all $b\in \BN$, where $\f_r$ and $\Y_r$ are the reduced effective divisors supported on $\f$ and $\Y$, respectively.
\end{lem}

\begin{proof}
Applying \eqref{E969} to $\f$ and $(m-1)\f$, we obtain
the exact sequence
\begin{equation}\label{E973}
\begin{tikzcd}[column sep=small]
0 \ar{r} & \CO_\f(-(m-1)\f) \ar[equal]{d} \ar{r} & \CO_{m\f} \ar{r} & 
\CO_{(m-1) \f} \ar{r} & 0\\
& 
\CO_{\f}((m-1) \Y)
\end{tikzcd}
\end{equation}
for $m\ge 1$. Clearly, $h^1(\CO_\f((m-1)\Y)) \le h^1(\CO_\f)$ by the exact sequence
\begin{equation}\label{E634}
\begin{tikzcd}[column sep=small]
0 \ar{r} & \CO_\f
\ar{r} & \CO_\f((m-1)\Y) \ar{r} &
\CO_{\f\cap (m-1)\Y}((m-1)\Y) \ar{r} & 0.
\end{tikzcd}
\end{equation}
Therefore,
\begin{equation}\label{E972}
h^1(\CO_{b\f}) 
\le \sum_{m=1}^{b} h^1(\CO_{\f}((m-1) \Y))
\le b h^1(\CO_\f).
\end{equation}

We can bound $h^1(\CO_\f)$ using the exact sequence
\begin{equation}\label{E974}
\begin{tikzcd}
0 \ar{r} & \CO_{X_p} \ar{r} & \CO_\f \oplus \CO_\Y \ar{r} 
& \CO_{\f\cap \Y} \ar{r} & 0
\end{tikzcd}
\end{equation}
with the induced long exact sequence
\begin{equation}\label{E978}
\begin{tikzcd}
& H^0(\CO_\f) \oplus H^0(\CO_\Y) \ar{r} &
H^0(\CO_{\f\cap \Y})\\
\ar{r} & H^1(\CO_{X_p}) \ar{r}
& H^1(\CO_\f) \oplus H^1(\CO_\Y) \ar{r} & 0.
\end{tikzcd}
\end{equation}
We obtain immediately from \eqref{E978} that $h^1(\CO_\f) \le 
h^1(\CO_{X_p}) - h^1(\CO_\Y)$. We can do a little better by observing that the map $H^0(\CO_{\f\cap \Y}) \to H^1(\CO_{X_p})$ is not necessarily zero. More precisely,
\begin{equation}\label{E979}
\begin{aligned}
h^1(\CO_\f) &= h^1(\CO_{X_p}) - h^1(\CO_\Y)
\\
&\quad - \dim \coker(H^0(\CO_\f) \oplus H^0(\CO_\Y) \to H^0(\CO_{\f\cap \Y})).
\end{aligned}
\end{equation}
By the diagram
\begin{equation}\label{E971}
\begin{tikzcd}[column sep=small]
& & H^0(\CO_{\f}) \oplus H^0(\CO_\Y)\ar{r} \ar{d} & H^0(\CO_{\f\cap \Y}) \ar[two heads]{d}\\
0\ar{r} & H^0(\CO_{X_{p,r}}) \ar[equal]{d} \ar{r} & H^0(\CO_{\f_r}) \oplus H^0(\CO_{\Y_r}) \ar[equal]{d} \ar{r} & H^0(\CO_{\f_r\cap \Y_r}) \ar[equal]{d}\\
& \BC & \BC\oplus \BC & \BC^{\oplus \f_r \Y_r}
\end{tikzcd}
\end{equation}
we conclude that
\begin{equation}\label{E981}
\begin{aligned}
&\quad \dim \coker(H^0(\CO_\f) \oplus H^0(\CO_\Y) \to H^0(\CO_{\f\cap \Y}))\\
&\ge \dim \coker(H^0(\CO_{\f_r}) \oplus H^0(\CO_{\Y_r}) \to H^0(\CO_{\f_r\cap \Y_r})) = \f_r \Y_r - 1
\end{aligned}
\end{equation}
where $X_{p,r}$ is the reduced effected divisor supported on $X_p$.
Then \eqref{E956} follows from \eqref{E972}, \eqref{E979}
and \eqref{E981}.
\end{proof}

We do not really need the term $\f_r \Y_r - 1$ in our application of \eqref{E956}.

From now on, we take $X$ to be the minimal desingularization of $Y$ only at the vertices of the cones $S_p$. Clearly, this does not change anything on either side of \eqref{E648} and $X$ satisfies the hypotheses of Lemma \ref{LEMTORSIONFREER1FOX} and \ref{LEMH1FP}.

Fixing a point $p\in C$ with $\delta_p\ne 0$, we write
\begin{equation}\label{E951}
X_p = \f_p + \Y_p
\end{equation}
as the sum of two effective divisors $\f_p$ and $\Y_p$
with $\Y_p$ the proper transform of $Y_p$.
Clearly, both $\f_p$ and $\Y_p$ are connected, $\f_p$ and $\Y_p$ meet properly and $X$ is smooth along $\f_p$.

Obviously, $\E_p \le \lceil b_{p,q} \rceil \F_p$ and hence
$h^1(\CO_{\E_p}) \le h^1(\CO_{\lceil b_{p,q} \rceil \F_p})$.
So we have
\begin{equation}\label{E970}
\begin{aligned}
h^1(\CO_{\E_p})
&\le h^1(\CO_{\lceil b_{p,q} \rceil \F_p})
\le \lceil b_{p,q} \rceil \big(h^1(\CO_{X_p}) - h^1(\CO_{\Y_p})\big)\\
&= \lceil b_{p,q} \rceil \big(5 - h^1(\CO_{\Y_p})\big)
\end{aligned}
\end{equation}
by Lemma \ref{LEMTORSIONFREER1FOX} and \ref{LEMH1FP}.

We claim that $h^1(\CO_{\Y_p})\ge 1$. Since $\Y_p$ is a partial normalization of $Y_p$, this is clear if $g(Y_p) \ge 1$. If $g(Y_p) = 0$, then by the second last statement of Theorem \ref{THMDELTAP}, $Y_p$ has a double point of type $y^2 = x^n$ at $q$ for some $n\le 6$ and hence $p_a(\Y_p) \ge p_a(Y_p) - 3 = 2$, i.e., $h^1(\CO_{\Y_p})\ge 2$. In conclusion, $h^1(\CO_{\Y_p})\ge 1$ and it follows that
\begin{equation}\label{E946}
h^1(\CO_{\E_p}) \le  4 \lceil b_{p,q} \rceil.
\end{equation}

Next, let us give a bound for $K_X \E_p$. Again, since $\E_p \le \lceil b_{p,q} \rceil \f_p$,
\begin{equation}\label{E961}
\begin{aligned}
K_X \E_p &\le \lceil b_{p,q} \rceil K_X \f_p 
= \lceil b_{p,q} \rceil (K_X X_p - K_X \Y_p)\\
&= \lceil b_{p,q} \rceil (8 - (2 p_a(\Y_p) - 2) - \f_p \Y_p)
\le 7 \lceil b_{p,q} \rceil
\end{aligned}
\end{equation}
as $p_a(\Y_p) = h^1(\CO_{\Y_p})\ge 1$ and $\f_p \Y_p \ge 1$. Indeed, as we will see later from the explicit desingularization of $Y$ at $q$, we always have $\f_p \Y_p \ge 2$. So the bound $7 \lceil b_{p,q} \rceil$ can be improved to $6 \lceil b_{p,q} \rceil$. But we have no need for it.

Finally, combining \eqref{E946} and \eqref{E961}, we have further reduced \eqref{E648} to
\begin{equation}\label{E962}
\eta(X_p \cap \tau^{-1}(U_q)) \ge \delta_p - 5 \lceil b_{p,q}\rceil.
\end{equation}
A proof of \eqref{E962} concludes our main theorem. So it comes down to the study of a double surface singularity
of type \eqref{E642}.

\section{Double Surface Singularities}\label{SECDSS}

\subsection{Resolution of Double Surface Singularities}

It suffices to prove the following:

\begin{prop}\label{PROPDOUBLESURFACE}
Let $Y = \{ y^2 = g(x,t) \}\subset \Delta_{xyt}^3$ be a normal surface singularity
with $g(x,t)\in \BC[[x,t]]$ satisfying
\begin{equation}\label{E925}
g(x,0) \not\equiv 0,\ g(0,t) = t^\delta \text{ and }
\left.\frac{\partial g}{\partial x}\right|_{x=0} \equiv 0 
\end{equation}
for some $\delta \in \BZ^+$
and let $\tau: X\to Y$ be the minimal resolution of $Y$. Then
\begin{equation}\label{E952}
\eta(X_0) \ge \delta -  5b
\end{equation}
where $X_0 = \tau^* Y_0 = \tau^*(Y\cap \{t=0\})$ and $b$ is defined by
\begin{equation}\label{E953}
b = \max_{\tau_* E = 0} \left(0, -\frac{a(E, Y)}{\mult_E(X_0)}\right).
\end{equation}
Here $X$ and $Y$ are regarded as families of curves over $\Delta_t$.
\end{prop}

Let us first resolve the double surface singularity
$Y = \{ y^2 = g(x,t) \}$.
The following procedure works for all $g(x,t)$
satisfying $g(0,t)\not\equiv 0$.

The simplest way to resolve the singularity of $Y$ is to consider it
as a double cover over $U = \Delta_{xt}^2$ ramified along
the curve $R = \{ g(x,t) = 0\}$, find a suitable embedded resolution
$(\widehat{U}, \widehat{R})$ of $(U,R)$ and then construct the double cover
of $\widehat{U}$ ramified along $\widehat{R}$. Actually, the algorithm
is very simple and straightforward:
Find the ``minimal'' birational map $u: \widehat{U} \to U$ such that
\begin{itemize}
\item $\widehat{U}$ and all components of $u^* R$ are smooth;
\item if the components $G_1, G_2, ..., G_m$ of $u^* R$ meet at a point, all but
at most one of $G_1, G_2, ..., G_m$ have even multiplicity in $u^* R$.
\end{itemize}
A more terse way to put this is that $u: \widehat{U} \to U$ is the minimal birational
map with $\widehat{U}$ smooth such that the divisor
\begin{equation}\label{E635}
\widehat{R} = u^* R - 2 \left\lfloor \frac{u^* R}{2} \right\rfloor
\end{equation}
is smooth, i.e., a disjoint union of smooth components. Then we take
$\widehat{Y}$ to be the double cover $\widehat{U}$ ramified along $\widehat{R}$.
We have the diagram
\begin{equation}\label{E636}
\begin{tikzcd}
\widehat{Y} \arrow{r}{\nu} \arrow{d}[left]{\widehat{\pi}} & Y\arrow{d}[right]{\pi} \\
\widehat{U} \arrow{r}{u} & U
\end{tikzcd}
\end{equation}

\begin{rem}\label{REMDOUBLE}
As an example, let us see how an $A_{\delta-1}$ singularity
$y^2 = x^2 + t^\delta$ is resolved in this way. 
The following diagram shows how to resolve $y^2 = x^2 + t^4$ as a double cover:
\begin{equation}\label{FIGCANONICAL}
\xy
{(30,20)*+{Y}},
{(5,20)*+{\widehat{Y}}},
{(5,5); (5,-5); **\crv{(0,0)}},
{(5,13); (5,3); **\crv{(0,8)}},
{(5,-13); (5,-3); **\crv{(0,-8)}},
{(0,16); (5,11); **@{-}},
{(0,-16); (5,-11); **@{-}},
{\ar (10,0); (20,0)^\nu},
{(25,4); (30,-1); **@{-}},
{(25,-4); (30,1); **@{-}},
{\ar (5,-18); (5, -23)_{\widehat{\pi}}},
{\ar (30,-18); (30, -23)_{\pi}},
{(5,-25); (5,-35); **\crv{(0,-30)}},
{(5,-33); (5,-43); **\crv{(0,-38)}},
{(0,-46); (5,-41); **@{-}},
{(0,-27); (8,-27); **@{-}},
{(0,-30); (8,-30); **@{-}},
{(-3,-28)*+{\widehat{R}\ \{}},
{\ar (10,-38); (20,-38)^u},
{(30,-50)*+{U}},
{(5,-50)*+{\widehat{U}}},
{(25, -36); (35,-36); **\crv{ (30,-41) }},
{(25, -41); (35,-41); **\crv{(30,-36)}},
{(30,-36);(30,-41); **@{-}},
{(39,-38.5)*+{\big\}\ R}},
\endxy
\end{equation}
\end{rem}

\subsection{Minimal Resolution of $Y$}

Such $\widehat{Y}$ is not necessarily the minimal resolution of $Y$ as it might contain $(-1)$-curves.
But these $(-1)$-curves are easily located and contraction of these curves will render $\widehat{Y}$ minimal.

\begin{prop}\label{PROPDOUBLEMIN}
Let $Y = \{ y^2 = g(x,t) \}\subset \Delta_{xyt}^3$ be a normal surface singularity with $g(x,t)\in \BC[[x,t]]$ satisfying
$g(x,0)\not\equiv 0$ and let $\nu: \widehat{Y}\to Y$ be the resolution of $Y$ constructed as above. Then
\begin{itemize}
\item
An exceptional curve $E\subset \widehat{Y}$ of $\nu$ is a smooth rational curve of self intersection $E^2 = -1$ if and only if $F = \widehat{\pi}(E)$ lies in the ramification locus $\widehat{R}$ and $F^2 = -2$. Hence the $(-1)$-curves of $\widehat{Y}$ are disjoint.
\item
$X$ is the minimal resolution of $Y$ for the contraction $\widehat{Y}\to X$ of all $(-1)$-curves of $\widehat{Y}$.
\end{itemize}
\end{prop}

\begin{proof}
For an exceptional curve $F\subset \widehat{U}$, if $F\subset \widehat{R}$, then
$E = \widehat{\pi}^{-1}(F)$ is a smooth rational curve of $E^2 = F^2/2$.
If $F\not\subset \widehat{R}$, we have three cases:
\begin{enumerate}
\item $E = \widehat{\pi}^{-1}(F)$ is an integral curve mapped $2$-to-$1$ to $F$ by $\widehat{\pi}$ and hence $E^2 = 2F^2$.
\item $\widehat{\pi}^* F = E_1 + E_2$, where $E_1$ and $E_2$ are two distinct integral curves satisfying $E_1 E_2 > 0$. Then
$E_i^2 = E_i . \widehat{\pi}^* F - E_1 E_2 = F^2 - E_1 E_2 < -1$ for $i=1,2$.
\item $\widehat{\pi}^* F = E_1 + E_2$, where $E_1$ and $E_2$ are two disjoint smooth rational curves satisfying $E_i^2 = F^2$ for $i=1,2$. This happens when $\widehat{\pi}$ is totally unramified over $F$.
\end{enumerate}
In the three cases above, only the third case will give us $(-1)$-curves.
Therefore, $(-1)$ curves $E$ on $\widehat{Y}$ are either pre-images of $(-2)$-curves in $\widehat{R}$, as stated in the proposition, or components of $\widehat{\pi}^{-1}(F)$ for $(-1)$-curves $F\subset \widehat{U}$, when $\widehat{\pi}$ is totally unramified over $F$.

If it is the latter, since $\widehat{\pi}$ is totally unramified over $F$, $F$ is disjoint from $\widehat{R}$. In other words, every component of $u^* R$ meeting $F$ has even multiplicity in $u^* R$. Since $F^2 = -1$, it can be contracted
via $\widehat{U}\to U'$, which is factored through by $u$.
That is, we have the diagram
\begin{equation}\label{E900}
\begin{tikzcd}
\widehat{U} \ar{r}{f} \ar[bend right]{rr}[below]{u} & U' \ar{r}{g} & U
\end{tikzcd}
\end{equation}
Since $F$ only meets components of $u^* R$ with even multiplicities, $F$ does not meet the proper transform of $R$. So under the contraction $f: \widehat{U}\to U$ of $F$, $f_* (u^* R) = g^* R$ remains to have smooth components; furthermore, it still has the property that no two components of odd multiplicities meet in $g^* R$. This contradicts the assumption that $u$ is the minimal birational map with these properties. In conclusion, $E$ is a $(-1)$-curve of $\widehat{Y}$ if and only if $F = \widehat{\pi}(E)$ is a $(-2)$-curve contained in $\widehat{R}$.

Next, let us prove that $X$ is minimal after we contract all $(-1)$-curves of $\widehat{Y}$ under $\widehat{Y}\to X$. Otherwise, there exist a smooth rational curve $A\subset \widehat{Y}$ and $(-1)$-curves $E_1, E_2, ..., E_m\subset \widehat{Y}$ such that $A^2 = -m-1\le -2$ and $A.E_i = 1$ for $i=1,2,...,m$.
Let $B = \widehat{\pi}(A)$ and $F_i = \widehat{\pi}(E_i)$ for $i=1,2,...,m$. By what we have proved above, each $F_i$ is a $(-2)$-curve contained in $\widehat{R}$
and hence $B$ meets at least one component of $\widehat{R}$ transversely.
So $\widehat{\pi}$ maps $A$ to $B$ $2$-to-$1$. And since both $A$ and $B$ are smooth rational curves, 
$B$ meets exactly two components of $\widehat{R}$ transversely and hence
$B.\widehat{R} = 2$. Therefore, $m\le 2$ as $F_i\subset \widehat{R}$. So we have $-m-1=A^2 = 2B^2 \ge -3$. It follows that
$A^2 = -2$, $B^2 = -1$ and $m=1$. Hence $B$ meets two components of $\widehat{R}$: one is $F = F_1$ and let us call the other $G$.

The configuration of $G\cup B \cup F$ is very simple. We have $G.B = B.F=1$ and
$G.F = 0$ since both $F$ and $G$ are components of $\widehat{R}$ and hence disjoint. Moreover, $B^2=-1$, $F^2=-2$ and $B$ meets no components of $\widehat{R}$ other than $F$ and $G$.

We can blow down the two curves $B$ and $F$ in sequence and again have a diagram like \eqref{E900}, where $f: \widehat{U}\to U'$ contracts $B\cup F$. Since $B$ meets no components of $\widehat{R}$ other than $F$ and $G$ and $B\cup F$ meets $G$ transversely at a unique point, $g^* R$ has again smooth components and the required property that no two components of odd multiplicities meet in $g^* R$. This once more contradicts the assumption that $u$ is the minimal birational map with these properties.
\end{proof}

\subsection{Discrepancies of $\nu$}

From \eqref{E636}, we can figure out the discrepancies of $\nu: \widehat{Y}\to Y$.
Since
\begin{equation}\label{E637}
\begin{split}
K_{\widehat{Y}}
&= \widehat{\pi}^* K_{\widehat{U}} + \frac{1}2 \widehat{\pi}^* \widehat{R}\\
\omega_Y &= \pi^* K_U + \frac{1}2 \pi^* R\\
K_{\widehat{U}} &= u^* K_U + \sum a(E, U) E,
\end{split}
\end{equation}
we obtain
\begin{equation}\label{E638}
\begin{split}
K_{\widehat{Y}}
&= \widehat{\pi}^* u^* K_U + \sum a(E, U) \widehat{\pi}^* E + \frac{1}2 \widehat{\pi}^* \widehat{R}\\
&= \nu^* \omega_Y + \sum a(E, U) \widehat{\pi}^* E
+ \frac{1}2 \widehat{\pi}^* (\widehat{R} - u^* R)\\
&= \nu^* \omega_Y + \sum a(E, U) \widehat{\pi}^* E 
- \widehat{\pi}^* \left\lfloor \frac{u^* R}{2} \right\rfloor.
\end{split}
\end{equation}
Clearly, the exceptional divisors of $\nu$ is contained in
$\sum \widehat{\pi}^*E $ for $E\subset \widehat{U}$ the exceptional divisors of $u$.

If $\mult_E(u^* R)$ is odd, $\widehat{\pi}$ is ramified along $E$ and
hence $\widehat{\pi}^* E = 2F$ for some $F$ with discrepancy
\begin{equation}\label{E639}
a(F, Y) = 2 a(E, U) - \mult_E(u^* R) + 1.
\end{equation}
If $\mult_E(u^* R)$ is even, $\widehat{\pi}$ is unramified at a general point
of $E$ and each component $F$ of $\widehat{\pi}^* E$ has discrepancy
\begin{equation}\label{E640}
a(F, Y) = a(E, U) - \frac{\mult_E(u^* R)}2.
\end{equation}

\begin{rem}\label{REMPROPDOUBLE}
For the canonical singularity $Y = \{ y^2 = x^2 + t^\delta\}$, it is obvious that
$\eta(\widehat{Y}_0) = \delta$
(see \eqref{FIGCANONICAL}). It is tempting to think that 
$\eta(\widehat{Y}_0)$ should go up as the singularity gets ``worse'' and thus expect that
$\eta(\widehat{Y}_0)\ge \delta$ under the hypotheses of the proposition. However,
this is simply false: consider $Y = \{ y^2 = x^4 + t^4\}$, which is resolved by
the following diagram:
\begin{equation}\label{FIGQUARTIC}
\xy
{(27.5,10)*+{Y}},
{(5,10)*+{\widehat{Y}}},
{(27.5,-31)*+{U}},
{(5,-31)*+{\widehat{U}}},
{(5,5); (5,-5); **\crv{(0,0)}},
{(0,8); (5,3); **@{-}},
{(0,-8); (5,-3); **@{-}},
{\ar (10,0); (20,0)^\nu},
{(0,0)*+{F}},
{(30,5); (30,-5); **\crv{(25,0)}},
{(25,5); (25,-5); **\crv{(30,0)}},
{\ar (5,-8); (5, -13)_{\widehat{\pi}}},
{\ar (27.5,-8); (27.5, -13)_{\pi}},
{(5,-15); (5,-25); **@{-}},
{(2,-17); (8,-17); **@{-}},
{(2,-18); (8,-18); **@{-}},
{(2,-19); (8,-19); **@{-}},
{(2,-20); (8,-20); **@{-}},
{(0,-28); (6,-22); **@{-}},
{(7,-25)*+{E}},
{\ar (10,-20); (20,-20)^u},
{(-1,-18)*+{\widehat{R}\ \{}},
{(27.5,-15); (27.5,-25); **@{-}},
{(25.5,-17); (29.5,-23); **@{-}},
{(25.5,-23); (29.5,-17); **@{-}},
{(25.5,-19); (29.5,-21); **@{-}},
{(25.5,-21); (29.5,-19); **@{-}},
{(33,-20)*+{\Big\}\ R}},
\endxy
\end{equation}
where $F$ is a component of $\widehat{Y}_0$ and a double cover of $E\isom \PP^1$
ramified at the $4$ points $\widehat{R}\cap E$. Clearly, $\eta(\widehat{Y}_0) = 2 < 4$.
On the other hand, $F$ is an elliptic curve satisfying $a(F, Y) = -1$ and hence
$b = 1$. Therefore, we have $\eta(\widehat{Y}_0) + 5b > \delta$. This example shows
that the term $5b$ in \eqref{E952} cannot be omitted.
\end{rem}

\subsection{A Special Sequence of Blowups}

Let us prove Proposition \ref{PROPDOUBLESURFACE}. We will work with $\widehat{Y}$ instead of the minimal resolution $X$ of $Y$. By Proposition \ref{PROPDOUBLEMIN}, $\widehat{Y}$ is not necessarily the same as $X$ and $X$ is obtained from $\widehat{Y}$ by contracting the $(-1)$-curves on $\widehat{Y}$. Among the three numbers $\eta(X_0)$, $\delta$ and $b$ in \eqref{E952}, this only affects $\eta(X_0)$, i.e., $\eta(X_0) \ne \eta(\widehat{Y}_0)$. We need to keep this in mind when estimating $\eta(X_0)$. On the other hand, $b$ is a birational invariant.

The birational map $u: \widehat{U} \to U$ consists of a sequence of blowups at points. The order of blowups can be altered in certain ways. Let us choose a special sequence of blowups to be the first $m$ blowups
\begin{equation}\label{E954}
\begin{tikzcd}[column sep=small]
\widehat{U} \ar{r}
\ar[bend left]{rrrrrr}[above]{u}
 & U^{[m]} \ar{r} & ... \ar{r} & 
U^{[i]} \ar{r} \ar[bend right]{rrr}[below]{u_i} & ... \ar{r} &
U^{[1]} \ar{r} & U^{[0]} \ar[equal]{r} & U
\end{tikzcd}
\end{equation}
where $U^{[i]}\to U^{[i-1]}$ is the blowup at the point
$p_{i-1} = \{x/t^{i-1} = t = 0\}$ with $E_i\subset U^{[i]}$ the exceptional divisor and $R_i\subset U^{[i]}$
the proper transform of $R$ for $i=1,2,...,m$.

That is,
$U^{[1]}$ is the blowup of $U$ at the origin $p_0 = \{x=t=0\}$ with 
exceptional divisor $E_1$. 
The map $u: \widehat{U}\to U$ factors through $U^{[1]}$ if and only if
$R$ is singular at $p_0$. 
Suppose that $R$ is singular at $p_0$ and $u$ factors through $U^{[1]}$. We further blow up
$U^{[1]}$ at the point $p_1 = \{x/t = t = 0\}\in E_1$ to obtain $U^{[2]}$ with exceptional divisor $E_2$. Then $u$ factors through $U^{[2]}$ if and only if $R_1$ is singular at $p_1$. In this way, we obtain a sequence of blowups in \eqref{E954} and we choose $m$ to be the largest integer
such that $u$ factors through $U^{[m]}$, which is equivalent to saying that $R_m$ either fails to pass through or is smooth at $p_m = \{x/t^{m} = t = 0\}$. 

We have the commutative diagram
\begin{equation}\label{E671}
\begin{tikzcd}
\widehat{Y} \arrow{rd}{\widehat{\pi}_m} \arrow{rr}{\nu} \arrow{d}[left]{\widehat{\pi}} & & Y\arrow{d}{\pi}\\
\widehat{U} \arrow{r} \arrow[bend right]{rr}[below]{u} & U^{[m]} \arrow{r}{u_m} & U.
\end{tikzcd}
\end{equation}
The central fiber $U_0^{[m]}$ of $U^{[m]}$ is
\begin{equation}\label{E669}
U_0^{[m]} = U_0 + E_1 + E_2 + ... + E_{m},
\end{equation}
where we abuse the notations a little bit by using $U_0$ and $E_i$ for the proper transforms of $U_0$ and $E_i$ in all of $\widehat{U}$ 
and $U^{[j]}$. For convenience, we also write $E_0 = U_0$.
Let
\begin{equation}\label{E674}
u_m^* R = R_m + c_1 E_1 + c_2 E_2 + ... + c_m E_m
\end{equation}
where $c_j = \mult_{E_j} (u_m^* R)$ satisfies
\begin{equation}\label{E683}
c_j - c_{j-1} = \mult_{p_{j-1}} R_{j-1}
\end{equation}
for $j=1,2,...,m$ (we let $c_0 = 0$).

We claim that the sequence $\{c_j\}$ is increasing and
\begin{equation}\label{E679}
c_1 \ge c_2 - c_1 \ge c_3 - c_2 \ge ... \ge c_m - c_{m-1} \ge 2.
\end{equation}
Indeed, we can interpret $c_j$ algebraically. Let
\begin{equation}\label{E665}
g(x,t) = \prod_{i=1}^n (x - \phi_i(t))
\end{equation}
for some $\phi_i(t) \in \BC[[\sqrt[N]{t}]$. Then
\begin{equation}\label{E957}
c_j = \sum_{i=1}^n \min(j, o(\phi_i)),
\end{equation}
where $o(\phi_i)$ is the order of $\phi_i(t)$ at $t=0$.
We may define $c_j$ for all $j\in \BZ$ by \eqref{E957} (hence $c_{-1} = -n$).

From \eqref{E957}, we see that $\{c_j - c_{j-1}\}$ is non-decreasing. 
Indeed, the difference $(c_{i} - c_{i-1}) - (c_{i+1} - c_{i})$ can be interpreted geometrically by
\begin{equation}\label{E982}
(c_{i} - c_{i-1}) - (c_{i+1} - c_{i}) = E_i R_m
\end{equation}
for $i=1,2,...,m-1$.
Since $R_{j-1}$ is singular at $p_{j-1}$, $c_j - c_{j-1}\ge 2$ for $j=1,2,...,m$ by \eqref{E683}. So we have \eqref{E679}. 

Since either $R_m$ is smooth at $p_m$ or $p_m\not\in R_m$,
i.e., $\mult_{p_m}(R_m) \le 1$, we have
\begin{equation}\label{E958}
c_m = \begin{cases}
\delta - 1 & \text{if } p_m\in R_m\\
\delta & \text{if } p_m\not\in R_m.
\end{cases}
\end{equation}
In addition,
the fact that $g_x(0,t)\equiv 0$ implies that $R_m$ and $E_m$ meet at $p_m$ with multiplicity at least $2$ when $R_m$ passes through $p_m$. That is,
\begin{equation}\label{E684}
\mult_{p_m} R_m = 1 \text{ and }
(R_m E_m)_{p_m}\ge 2 \text{ if } c_m = \delta - 1.
\end{equation}

Note that $E_m$ is a component satisfying
\begin{equation}\label{E668}
a(E_m, U) = m,\ \mult_{E_m} (\widehat{U}_0) = 1
\text{ and } \mult_{E_m} (u^* R) = c_m
\end{equation}
by \eqref{E669} and \eqref{E674}. Combining \eqref{E668} with 
\eqref{E639} and \eqref{E640}, we obtain
\begin{equation}\label{E670}
b \ge -\left(a(E_m,U) - \left\lfloor\frac{\mult_{E_m} (u^* R)}{2}\right\rfloor\right) = \left\lfloor \frac{c_m}2\right\rfloor - m.
\end{equation}

Let
$\rho: \widehat{Y}\to X$ be the contraction of all $(-1)$-curves of $\widehat{Y}$ given in Proposition \ref{PROPDOUBLEMIN} with diagram
\begin{equation}\label{E983}
\begin{tikzcd}
& \widehat{Y} \ar{d}{\nu} \ar{ld}[above]{\rho} \ar{r}{\widehat{\pi}} & \widehat{U}
\ar{d}{u}\\
X \ar{r}{\tau} & Y \ar{r}{\pi} & U
\end{tikzcd}
\end{equation}
Let $F_i = \widehat{\pi}^{-1}(E_i)$ for $E_i\subset \widehat{U}$ and $i=0,1,...,m$. That is, $F_i$ are the proper transforms of $E_i$.  Next, we are going to figure out which $F_i$ are contracted by $\rho$. Let
\begin{equation}\label{E960}
\I = \{ 1\le i \le m-1: \rho_* F_i = 0\}.
\end{equation}
By Proposition \ref{PROPDOUBLEMIN}, $F_i$ is contracted by $\rho$ if and only if $c_i = \mult_{E_i}(\widehat{U}_0)$ is odd and $E_i^2 = -2$ in $\widehat{U}$. When we consider such $E_i$ in $U^{[m]}$, it has the property that $2\nmid c_i$, $2\mid c_{i-1}, c_{i+1}$ and
$E_i\cap R_m = \emptyset$; otherwise, if one of $c_{i-1}$ and $c_{i+1}$ is odd, we need to blow up the intersection $E_{i-1}\cap E_i$ or $E_i\cap E_{i+1}$, which renders $E_i^2 < -2$ in $\widehat{U}$; similarly, if $E_i$ meets $R_m$, we need to blow up the intersections $E_i\cap R_m$, which again renders $E_i^2 < -2$ in $\widehat{U}$. Since
\begin{equation}\label{E686}
E_i R_m = \begin{cases}
(c_{i} - c_{i-1}) - (c_{i+1} - c_i) & \text{ if } i < m\\
c_{m} - c_{m-1} & \text{ if } i = m
\end{cases}
\end{equation}
we conclude
\begin{equation}\label{E643}
\begin{aligned}
\I &= \{ 1\le i \le m-1: \rho_* F_i = 0\}
\\
&= \Big\{ 1\le i \le m-1: 2\nmid c_i \text{ and } 2\nmid (c_i - c_{i-1}) = (c_{i+1} - c_i)\Big\}.
\end{aligned}
\end{equation}

The criterion for $\rho_* F_m = 0$ is more complicated. Suppose that $\rho_* F_m = 0$. Of course, $c_m$ is odd.
If $E_m$ meets $R_m$ at more than one point, it is easy to see that
$\rho_* F_m \ne 0$. So $E_m$ meets $R_m$ at a unique point $q$. Let $U'$ be the blowup of $U^{[m]}$ at $q$ with exceptional divisor $E'$. Then $\widehat{U}\to U^{[m]}$ factors through $U'$. If $(E_m R_m)_q > \mult_q (R_m)$, the proper transforms of $E_m$ and $R_m$ meet in $U'$ and hence $\rho_* F_m \ne 0$. Consequently,
\begin{equation}\label{E986}
\mult_q (R_m) = (E_m R_m)_q = E_mR_m = c_m - c_{m-1}
\end{equation}
by \eqref{E686} and hence
\begin{equation}\label{E985}
\mult_{E'} (u')^* R = \begin{cases}
c_{m-1} + c_m + \mult_q(R_m) = 2c_m & \text{if } q=q_m\\
c_m + \mult_q(R_m) = 2c_m - c_{m-1} & \text{if } q\ne q_m
\end{cases}
\end{equation}
for $q_m = E_{m-1}\cap E_m$ and $u': U'\to U$. Clearly, $\mult_{E'} (u')^* R$ must be even; otherwise, $\rho_* F_m \ne 0$. This, together with $2\nmid c_m$ and \eqref{E986}, is also sufficient for $\rho_* F_m = 0$. Namely,
\begin{equation}\label{E987}
\begin{aligned}
\rho_* F_m = 0 &\Leftrightarrow 2\nmid c_m,\ \mult_q (R_m) = c_m - c_{m-1} \text{ for some } q\in E_m\\
&\hspace{24pt} \text{and either } q=q_m
\text{ or } q\ne q_m \text{ and } 2\mid c_{m-1}.
\end{aligned}
\end{equation}

If it is the former case $q = q_m$,
$E_{m-1} R_m \ge \mult_q (R_m)$. Therefore,
\begin{equation}\label{E989}
c_{m-1} - c_{m-2} \ge 2(c_m - c_{m-1})
\end{equation}
if $\rho_* F_m = 0$ and $q_m\in E_m\cap R_m$. In addition, in this case, we see that
\begin{equation}\label{E991}
\rho_* \widehat{\pi}_m^* E_m \ne 0 \text{ is nonreduced in } X_0
\end{equation}
due to the presence of $E'$, which is nonreduced in $U_0'$.

If it is the latter case $q\ne q_m$, we have a better estimate for $b$ than \eqref{E670} using $E'$. That is,
\begin{equation}\label{E988}
\begin{aligned}
b &\ge -\left(a(E', U) - \frac{\mult_{E'}(u')^* R}{2}\right)
= \frac{2c_m - c_{m-1}}2 - m - 1\\
&\ge
c_m - \frac{c_m - 3}2 - m - 1 = \left\lfloor\frac{c_m}2\right\rfloor +1 - m
\end{aligned}
\end{equation}
if $\rho_* F_m = 0$ and $q_m \not\in E_m\cap R_m$. 

\subsection{Estimate $\eta(X_0)$}

When $\delta = 1$, $Y$ is smooth at $p_0$ and hence $X = Y$. Since $Y_0$ is singular at $p_0$, $\eta(X_0) \ge 1$ and \eqref{E952} follows. Let us assume that $\delta \ge 2$. Then $m\ge 1$.

We use the notation $\eta_0(X_0, G)$ to denote
\begin{equation}\label{E680}
\eta_0(X_0, G) = \sum_{q\in G} \eta_0(X_0, q)
\end{equation}
for a subset $G\subset X_0$. Let $q_i = E_{i-1}\cap E_i$ in $U^{[m]}$ for $1\le i \le m$ and let us consider $\eta_0(X_0,Q_i)$
for
\begin{equation}\label{E984}
Q_i = \rho(\widehat{\pi}_m^{-1}(q_i)).
\end{equation}
We discuss this in three not mutually exclusive cases.
\begin{enumerate}
\item
Suppose that $c_{i-1}$ and $c_i$ are both even.
Then $\rho_* F_j \ne 0$ for $j=i-1,i$. It is not hard to see that
$Q_i$ either contains a union $G$ of curves such that $G\cap \rho(F_{i-1}) \ne G\cap \rho(F_i) \ne \emptyset$ or consists of two points in $\rho(F_{i-1})\cap \rho(F_i)$. Either way, $Q_i$ contains at least two singularities of $X_{0,r} = (X_0)_\text{red}$. Hence
\begin{equation}\label{E647}
\eta_0(X_0, Q_i) \ge 2.
\end{equation}
\item
Suppose that $c_{i-1}$ and $c_i$ are both odd and $\rho_* F_i \ne 0$. 
Then $\rho_* F_j \ne 0$ for $j=i-1,i$.
Since $c_{i-1}$ and $c_i$ are odd,
$\rho(F_{i-1})$ and $\rho(F_i)$ are two disjoint components of $X_0$;
then $Q_i$ contains a union $G$ of curves such that $G\cap \rho(F_{i-1}) \ne G\cap \rho(F_i) \ne \emptyset$ and \eqref{E647} follows. Note that $\rho_* F_{i}\ne 0$ automatically holds
by \eqref{E643}
if $2\nmid c_{i-1}c_i$ and $i < m$.
\item
For every $1\le i\le m$, we claim that $Q_i$ contains at least one singularity of $X_{0,r}$ and hence 
\begin{equation}\label{E901}
\eta_0(X_0,Q_i) \ge 1.
\end{equation}
This is clear if $\rho_* F_j \ne 0$ for $j=i-1,i$. Otherwise, $\rho$ only contracts one of $F_{i-1}$ and $F_i$.
Again, \eqref{E901} is obvious when $\rho_* F_{i-1} = 0$ or $\rho_* F_i = 0$
and $i < m$. This leaves us the only case that $i=m$ and $\rho_* F_m = 0$. By \eqref{E686}, $E_m R_m\ge 2$
in $U^{[m]}$. Since we need to further blow up the intersections
$E_m\cap R_m$ under $\widehat{U}\to U^{[m]}$, 
it is easy to see that $Q_m$ contains at least one singularity of
$X_{0,r}$. Therefore, \eqref{E901} holds.
\end{enumerate}

In addition, when $c_m = \delta - 1$, $\rho_* F_m \ne 0$ and
there is at least one singularity of $X_{0,r}$ in $\rho(\widehat{\pi}_m^{-1}(p_m))$
by \eqref{E684}. Therefore,
\begin{equation}\label{E673}
\rho_* F_m \ne 0 \text{ and }
\eta_0(X_0,\rho(\widehat{\pi}_m^{-1}(p_m))) \ge 1
\text{ if } c_m = \delta - 1.
\end{equation}

The two sets $Q_i$ and $Q_j$ for $1\le i < j \le m$
are disjoint unless $j=i+1$ and $\rho_* F_i = 0$, i.e., $i\in \I$.
So each $i\in \I$ reduces the above estimate of $\eta_0(X_0)$ by one. Therefore,
\begin{equation}\label{E959}
\eta_0(X_0) \ge \sum_{i=1}^{m} \eta_0(X_0,Q_i) - |\I| + (\delta - c_m)
\end{equation}
where the term $\delta - c_m$ accounts for the contribution of $p_m$ given in \eqref{E673} when $c_m = \delta - 1$.
Applying our previous estimate on $\eta_0(X_0, Q_i)$, we have
\begin{equation}\label{E939}
\begin{aligned}
\eta_0(X_0) &\ge \sum_{i=1}^{m} \eta_0(X_0,Q_i)
- |\I| + (\delta - c_m)
\\
&\ge m + \frac{1}2 \sum_{i=1}^{m} (1 + (-1)^{c_i - c_{i-1}}) 
- \varepsilon - |\I| + (\delta - c_m)
\end{aligned}
\end{equation}
where the first term $m$ accounts for each $i$ in \eqref{E901}, the second term takes into account the cases that $c_{i-1}$ and $c_i$ are both even or odd in \eqref{E647}, the third term
\begin{equation}\label{E990}
\varepsilon = \begin{cases}
1 & \text{if } 2\nmid c_{m-1}c_m \text{ and } \rho_* F_m = 0\\
0 & \text{otherwise}
\end{cases}
\end{equation}
corrects the previous term in the case that both $c_{m-1}$ and $c_m$ are odd and $\rho_* F_m = 0$, and the rest two terms $-|\I|$
and $\delta - c_m$ are explained as above.

Let us also keep in mind that $\eta(X_0)$ has contribution from $\eta_1(X_0)$. At the very least, if $X_0$ is nonreduced,
$X_0 - X_{0,r}\ne 0$ and hence
\begin{equation}\label{E681}
\eta_1(X_0) = K_X(X_0 - X_{0, r}) - (X_0 - X_{0, r})^2 \ge 2.
\end{equation}
Suppose that $c_m$ is odd. If $\rho_* F_m \ne 0$, then
$\rho_* \widehat{\pi}_m^* E_m \ne 0$ is nonreduced in $X_0$
and \eqref{E681} holds. Otherwise, $\rho_* F_m = 0$ and hence
$E_m$ meets $R_m$ at a unique point $q$ by \eqref{E987}.
If $q = q_m$, then $X_0$ is nonreduced by \eqref{E991} and
\eqref{E681} follows. Otherwise, $q\ne q_m$ and we have 
\eqref{E988}. In conclusion, if $c_m$ is odd, we have either 
\eqref{E681} or a better bound for $b$ in \eqref{E988}.
Either way, we can say at least
\begin{equation}\label{E685}
\eta_1(X_0) + 5b
\ge 5\left\lfloor\frac{c_m}2\right\rfloor - 5m 
+ (1 - (-1)^{c_m})
\end{equation}
by \eqref{E670}, \eqref{E988} and \eqref{E681}.
Combining \eqref{E939} and \eqref{E685}, we obtain
\begin{equation}\label{E672}
\begin{aligned}
\eta(X_0) + 5b &\ge 5\left\lfloor\frac{c_m}2\right\rfloor - 4m - |\I| + \frac{1}2 \sum_{i=1}^{m} (1 + (-1)^{c_i - c_{i-1}})\\
&\quad - \varepsilon + (\delta - c_m) + (1 - (-1)^{c_m}).
\end{aligned}
\end{equation}
Let $1\le l\le m$ be the largest integer such that $c_l - c_{l-1}$ is odd. Namely, we have
\begin{equation}\label{E675}
2\nmid \mu = c_l - c_{l-1} 
\text{ and }
2\mid (c_{i} - c_{i-1}) \text{ for } l+1\le i\le m.
\end{equation}
We let $l=0$ if $c_i - c_{i-1}$ are even for all $1\le i\le m$.
Then
\begin{equation}\label{E676}
c_m = \sum_{i=1}^m (c_i - c_{i-1}) = \sum_{i=1}^l (c_i - c_{i-1})
+ \sum_{i=l+1}^m (c_i - c_{i-1})\ge \mu l + 2(m-l)
\end{equation}
and
\begin{equation}\label{E677}
\frac{1}2 \sum_{i=1}^{m} (1 + (-1)^{c_i - c_{i-1}})
\ge \frac{1}2 \sum_{i=l+1}^{m} (1 + (-1)^{c_i - c_{i-1}})
= m-l.
\end{equation}
By the description of $\I$ in \eqref{E643}, we see that
$\I\subset \{1,2,...,l-1\}$ and $|i-j|\ge 2$ for all $i\ne j\in \I$. Consequently,
\begin{equation}\label{E682}
|\I| \le \left\lfloor \frac{l}2 \right\rfloor.
\end{equation}

Combining \eqref{E672}, \eqref{E677} and \eqref{E682}, we derive
\begin{equation}\label{E678}
\begin{aligned}
\eta(X_0) + 5b  
&\ge 5\left\lfloor\frac{c_m}2\right\rfloor - 4m
+ \frac{1}2 \sum_{i=1}^{m} (1 + (-1)^{c_i - c_{i-1}}) - |\I|\\
&\quad - \varepsilon + (\delta - c_m) + (1 - (-1)^{c_m})\\
&\ge 5\left\lfloor\frac{c_m}2\right\rfloor - 4(m-l) - 4l
+ (m-l) - \left\lfloor \frac{l}2 \right\rfloor\\
&\quad - \varepsilon + (\delta - c_m) + (1 - (-1)^{c_m})\\
&= 5\left\lfloor\frac{c_m}2\right\rfloor - \left\lfloor \frac{9l}2\right\rfloor - 3(m-l)\\
&\quad - \varepsilon + (\delta - c_m) + (1 - (-1)^{c_m}).
\end{aligned}
\end{equation}

We claim that the above inequality implies \eqref{E952}. We argue in three cases.

\smallskip

\noindent\underline{$l=0$}: In this case, all $c_i - c_{i-1}$ are even for $i=1,2,...,m$. Hence $c_m$ is even and $\varepsilon = 1 - (-1)^{c_m} = 0$.
Then \eqref{E678} becomes
\begin{equation}\label{E992}
\begin{aligned}
\eta(X_0) + 5b &\ge \frac{5c_m}2 - 3m + (\delta - c_m)\\
&\ge \frac{5c_m}2 - \frac{3c_m}2 + (\delta - c_m) = \delta.
\end{aligned}
\end{equation}
since $2m \le c_m$ by \eqref{E676}.

\smallskip

\noindent\underline{$l>0$ and $\varepsilon=0$}: Since $\mu\ge 3$, we have
\begin{equation}\label{E993}
\begin{aligned}
\eta(X_0) + 5b  
&= 5\left\lfloor\frac{c_m}2\right\rfloor - \left\lfloor \frac{9l}2\right\rfloor - 3(m-l)\\
&\quad + (\delta - c_m) + (1 - (-1)^{c_m})\\
&\ge 5\left\lfloor\frac{c_m}2\right\rfloor - \left\lfloor \frac{9(c_m - 2(m-l))}{2\mu}\right\rfloor - 3(m-l)\\
&\quad + (\delta - c_m) + (1 - (-1)^{c_m})\\
&\ge 5\left\lfloor\frac{c_m}2\right\rfloor - \left\lfloor \frac{3(c_m - 2(m-l))}{2}\right\rfloor - 3(m-l)\\
&\quad + (\delta - c_m) + (1 - (-1)^{c_m})\\
&= 5\left\lfloor\frac{c_m}2\right\rfloor - \left\lfloor \frac{3c_m}{2}\right\rfloor + (1 - (-1)^{c_m}) + (\delta - c_m)\\
&= c_m + (\delta - c_m) = \delta
\end{aligned}
\end{equation}
by \eqref{E676} and \eqref{E678}.

\smallskip

\noindent\underline{$l>0$ and $\varepsilon=1$}: In this case,
both $c_{m-1}$ and $c_m$ are odd and $\rho_* F_m = 0$.
By \eqref{E987}, $E_m$ meets $R_m$ at the unique point $q_m$. Then by \eqref{E989},
\begin{equation}\label{E994}
\mu \ge 2(c_m - c_{m-1}) \ge 4.
\end{equation}
And since $\mu$ is odd, we actually have $\mu \ge 5$. Then
\begin{equation}\label{E995}
c_m \ge \mu l + 2(m-l) \ge 5l + 2(m-l) \ge 3l + 2(m-l) + 2
\end{equation}
by \eqref{E676}. Then \eqref{E678} becomes
\begin{equation}\label{E996}
\begin{aligned}
\eta(X_0) + 5b  
&= \frac{5(c_m-1)}2 - \left\lfloor \frac{9l}2\right\rfloor - 3(m-l) + (\delta - c_m) + 1\\
&\ge \frac{5(c_m-1)}2 - \left\lfloor \frac{3(c_m - 2(m-l) - 2)}{2}\right\rfloor - 3(m-l)\\
&\quad + (\delta - c_m) + 1\\
&\ge \frac{5(c_m-1)}2 - \left\lfloor \frac{3c_m}{2}\right\rfloor + (\delta - c_m) + 4\\
&= \frac{5(c_m-1)}2 -  \frac{3c_m-1}{2} + (\delta - c_m) + 4\\
&= c_m + (\delta - c_m) + 2 = \delta + 2.
\end{aligned}
\end{equation}

\smallskip

This finishes the proof of \eqref{E952}.

\end{document}